\newtheorem{theorem}{Theorem}[section]
\newtheorem{definition}[theorem]{Definition}
\newtheorem{lemm}[theorem]{Lemma}
\newtheorem{lemma}[theorem]{Lemma}
\newtheorem{example}[theorem]{Example}
\newtheorem{prop}[theorem]{Proposition}
\newtheorem{cor}[theorem]{Corollary}
\newtheorem{corollary}[theorem]{Corollary}
\def\x{{\xi}}
\def\le{{\ell}}
\def\ex{e}
\def\fmodels{\xymatri\x{
\ar@{|=}[r]^{<\omega}&
}
}
\def\nmodels{\xymatri\x{
\ar@{|=}[r]^{N}&
}
}
\def\<{\left <}
\def\>{\right >}
\def\cbra{\left \{}
\def\cket{\right \}}
\DeclareSymbolFont{AMSb}{U}{msb}{m}{n}
\DeclareMathSymbol{\N}{\mathbin}{AMSb}{"4E}
\DeclareMathSymbol{\Z}{\mathbin}{AMSb}{"5A}
\DeclareMathSymbol{\R}{\mathbin}{AMSb}{"52}
\DeclareMathSymbol{\Q}{\mathbin}{AMSb}{"51}
\DeclareMathSymbol{\I}{\mathbin}{AMSb}{"49}
\DeclareMathSymbol{\C}{\mathbin}{AMSb}{"43}
\begin{document}

\title{Hyperations, Veblen progressions and transfinite iterations of ordinal functions}
\author{David Fern\'{a}ndez-Duque\footnote{Group for Logic, Language and Information, Universidad de Sevilla} \and Joost J. Joosten\footnote{Department of Logic, History and Philosophy of Science, University of Barcelona}}
%\address{Group for Logic, Language and Information,\\ Universidad de Sevilla (\dag);\\ Department of Logic, History and Philosophy of Science,\\ University of Barcelona (\dag \dag)}
%\thanks{This research is supported by the John Templeton Foundation and the Kurt G\"odel Society.}

\maketitle

\begin{abstract}
In this paper we introduce {\em hyperations} and {\em cohyperations}, which are forms of transfinite iteration of ordinal functions.

Hyperations are iterations of normal functions. Unlike iteration by pointwise convergence, hyperation preserves normality. The hyperation $\langle f^\xi \rangle_{\xi \in {\sf On}}$ of a normal function $f$ is a sequence of normal functions so that $f^0= {\sf id}$, $f^1 = f$ and for all $\alpha, \beta$ we have that $f^{\alpha + \beta} = f^\alpha f^\beta$. These conditions do not determine $f^\alpha$ uniquely; in addition, we require that $\langle f^\alpha\rangle_{\alpha\in{\sf On}}$ be minimal in an appropriate sense. We study hyperations systematically and show that they are a natural refinement of Veblen progressions. 

Next, we define \emph{cohyperations}, very similar to hyperations except that they are left-additive: given $\alpha, \beta$, $f^{\alpha + \beta}= f^\beta f^\alpha$. Cohyperations iterate initial functions which are functions that map initial segments to initial segments. We systematically study co-hyperations and see how they can be employed to define left inverses to hyperations.

Hyperations provide an alternative presentation of Veblen progressions and can be useful where a more fine-grained analysis of such sequences is called for. They are very amenable to algebraic manipulation and hence are convenient to work with. Cohyperations, meanwhile, give a novel way to describe slowly increasing functions as often appear, for example, in proof theory.
\end{abstract}

%%
%% Start line numbering here if you want
%%
% \linenumbers
%% main text
\section{Introduction}

Cantor famously discovered ordinal numbers \cite{cantor} when he needed to successively remove isolated points from a set of real numbers. Given a closed set $A\subseteq \mathbb R$, he considered the set $dA$ of all non-isolated points\footnote{More generally, if $A$ is an arbitrary subset of the reals, $dA$ is the set of limit points of $A$.} of $A$. The set $dA$ may in turn posses isolated points, so we may remove these by considering $d^2A$, and so on.

Evidently $d^{n+1}A\subseteq d^nA$ for all $n$, and we thus obtain a decreasing sequence
\[A\supseteq dA\supseteq ddA\supseteq\hdots \supseteq d^nA\supseteq\hdots\]
However, it may very well be that the set $\bigcap_{n<\omega}d^nA$ which one obtains at the end of this process still has isolated points. Thus it is convenient to continue the construction into the transfinite. To do this, let us denote the class of all ordinals by ${\sf On}$ and the class of limit ordinals by ${\sf Lim}$, and define
\begin{enumerate}
\item $d^0A=A$
\item $d^{\xi+1}A=dd^\xi A$ for all $\xi\in{\sf On}$
\item $d^\lambda A=\displaystyle\bigcap_{\xi<\lambda}d^\xi A$ for $\lambda\in{\sf Lim}$.
\end{enumerate}
 By cardinality considerations, given a closed set $A$ there must be an ordinal $\alpha$ such that $d^\alpha A=d^{\alpha+1}A$, thus effectively removing all `hereditarily isolated' points of $A$.

However, this is not the only setting in which one may wish to iterate a function transfinitely. When working with sets, one has the convenience of infinitary operations (such as intersection) for defining limit iterates, but this is not necessary. If $f:\mathbb N\to\mathbb N$, one may well define transfinite iterations of $f$ by diagonailzation. To be precise, let $\Lambda$ be a countable ordinal such that for each $\lambda<\Lambda$ we have assigned a fundamental sequence $\langle\lambda[n]\rangle_{n<\omega}$ with the property that $\lambda=\lim_{n\to\omega}\lambda[n]$.

Then, we may set
\begin{enumerate}
\item $f^0(n)=n$
\item $f^{\xi+1}(n)=f^\xi(f(n))$
\item $f^\lambda(n)=f^{\lambda[n]}(n)$.
\end{enumerate}
If one begins with $f(n)=n+1$, then one precisely obtains the Hardy hierarchy \cite{hardy}, while the sequence $\langle f^{\omega^\xi}\rangle_{\xi\in{\sf On}}$ gives the fast-growing Wainer hierarchy \cite{wainer}. Observe that for $\lambda\in{\sf Lim}$, the definition of $f^\lambda$ is somewhat ad-hoc, depending on our choice of fundamental sequence, while for successor ordinals the correspoinding iterate is determined by the previous ones. This is often the case; defining iterations at limit stages usually requires choosing a good option from many possible candidates, a phenomenon that we will also encounter.\\\\

In this paper we are interested, specifically, in transfinite iterations of functions $f:{\sf On}\to{\sf On}$. Here, as in the set-theoretic setting, one has infinitary operations at hand (such as taking suprema and infima), but as we shall see defining transfinite iterations is not unproblematic, especially if one wishes for iteration to preserve structural properties of $f$.

Perhaps the best-known example of a hierarchy of ordinal functions is given by Veblen progressions, introduced in \cite{Veblen:1908}. Recall that if $f=f_0$ is some {\em normal}\footnote{That is, strictly increasing and continuous. We use {\em continuous} in the sense of the order topology on ordinals, so that $f$ is continuous if $f(\lambda)=\lim_{\xi\to\lambda}f(\xi)$ for $\lambda\in{\sf Lim}$. Since normal functions are increasing, if $f$ is normal one may replace this condition by $f(\lambda)=\sup_{\xi<\lambda}f(\xi)$.} function and $\alpha$ an ordinal, $f_\alpha$ in the Veblen progression of $f$ is the normal function that enumerates in increasing order the class of ordinals
\[\{\xi\phantom{a}|\phantom{a}\forall \beta<\alpha,\,f_\beta(\xi)=\xi\};\]
we will discuss these in greater detail later. Veblen progressions may be used to give systems of ordinal notation below the Feferman-Sch\"ute ordinal, $\Gamma_0$ \cite{Feferman:1968,schute}. The method may be generalized to generate larger ordinals; see \cite{Crossley:1986} for an overview. 

However, Veblen progressions are not iterations in the sense we shall consider; for example, $f_2$ is not the same as $f_1\circ f_1$. Iterations of ordinal functions in a sense similar to our own are discussed in \cite{Girard:1984} (where a category-theoretic view of Veblen progressions is also given). There, the authors consider iterating monotone functions by taking limits:

\begin{definition}[Left transfinite iteration]\label{defli}
Let $f:{\sf On}\to{\sf On}$. We define the {\em left transfinite iteration of $f$} as the unique sequence of functions ${\rm TI}_l[f]=\langle f^\xi\rangle_{\xi\in{\sf On}}$ given by the following recursion:
\begin{enumerate}
\item $f^0  = {\sf id};$ 
\item $f^{\alpha+1} = f \circ f^\alpha;$
\item $f^\lambda \gamma =  \displaystyle\sup_{\alpha<\lambda} f^\alpha \gamma$ for $\lambda\in{\sf Lim}$.
\end{enumerate}
\end{definition}

As \cite{Girard:1984} already noted, ${\rm TI}_l$ preserves weak monotonicity but not normality. Further, if $f$ is continuous, then the sequence stagnates in the sense that $f^\xi=f^\omega$ for all $\xi>\omega$; we shall discuss this in detail in the next section. One of our main objectives is to describe a notion of transfinite iteration of normal functions which {\em does} preserve normality and does not stagnate. We shall define such iterations and call them {\em hyperations}.

Hyperations can be seen as a refinement of Veblen progressions. Much as with the Hardy and Wainer hierarchies, if $\langle f^\xi\rangle_{\xi\in{\sf On}}$ enumerates the hyperates of $f$ while $\langle f_\xi\rangle_{\xi\in{\sf On}}$ its Veblen progression, we have the relation
\[f^{\omega^\alpha}=f_\alpha.\]

Indeed, $f^\alpha$ may be defined completely in terms of Veblen progressions. However, we see at least three reasons why the hyperations viewpoint may often be useful:
\begin{enumerate}
\item it places the Veblen hierarchy in a new light providing an alternative presentation;

\item the algebraic structure that comes with hyperations is very convenient, facilitating applications, as the authors have found \cite{glpmodels,WellOrders};

\item hyperations naturally give rise to a uniform theory of certain well-behaved left-inverses of hyperations that we call {\em cohyperations}. 
\end{enumerate}

Cohyperations also come with a beautiful algebraic structure. We shall see that cohyperation is the proper notion for transfinitely iterating \emph{initial} functions, which are ordinal functions that map initial segments to initial segments. As such, cohyperations give rise to rather slow-growing ordinal functions.
%Cohyperations yield very slow-growing initial ordinal functions. 
In various parts of proof theory slow growing functions on the natural numbers play an important role (see for example \cite{Weiermann:2005,Weiermann:2009, rathjen}). The slowly-growing cohyperations may well give rise to other useful examples of slow-growing functions on the natural numbers, although at this point this is still speculative.
\medskip

As an academic curiosity by itself it is worthwhile to pursue a notion of transfinite iteration for ordinal functions outright, but the notions of hyperations and cohyperations were originally developed by the authors when working in the field of transfinite provability logics \cite{glpmodels,WellFoundedOrders, WellOrders} in order to extend many existing techniques into the transfinite setting. Provability logics, aside from being complex and fascinating on their own right, have been used by Beklemishev to give an ordinal analysis of Peano Arithmetic and related systems \cite{Beklemishev:2004}. We believe our efforts will lead to similar applications to much stronger systems, such as Predicative Analysis.\\\\

{\bf Plan of the paper.} In Section \ref{section:AdditivityVersusCoadditivity} we set the scene for our definition of transfinitely iterating normal functions and motivate the choices we made. In Section \ref{section:OrdinalArithmetic} we introduce the basic notions of ordinal arithmetic that we need for our study. 

Then, in Section \ref{section:Hyperations} we define hyperations as minimal transfinite iterations of a normal function. We establish the main properties of hyperations. In particular we provide two characterizations, a recursive definition and one characterization in terms of Veblen progressions. 

Next, Section \ref{ila} defines {\em left adjoints} to normal functions, which are particularly well-behaved inverses. Section \ref{exlo} then introduces exponentials and logarithms, which are important examples of ordinal functions in our setting. 

Section \ref{section:Cohyperations} introduces \emph{cohyperations}, another form of iterations which are dual to hyperations. Like with hyperations, we prove the essential properties of cohyperations and give a recursive characterization for them. Section \ref{exseq} then goes on to discuss exact sequences, which are very closely tied to cohyperations.

Finally, Section \ref{ih} relates hyperations and cohyperations by showing that the latter provide left-inverses for the former. As an important example we mention {\em hyperexponentials} and {\em hyperlogarithms}.

%%%%%%%%%%%%%%%%%%%%%%%%%%%%%%%%%%%%%%%%%%%%%%%%%%%%
\section{Left vs. right additivity}\label{section:AdditivityVersusCoadditivity}
Let us restrict our attention in this section on how to transfinitely iterate \emph{normal} ordinal functions; that is, ordinal functions $f$ that are strictly monotone, i.e., $\alpha < \beta \Rightarrow f(\alpha ) < f(\beta)$, and continuous in the sense that $f(\lambda) = \displaystyle\lim_{\xi\to\lambda} f(\xi)$, for $\lambda\in{\sf Lim}$.

Let us start out by considering ${\rm TI}_l[f]$. If $f$ is normal, we can prove by an easy induction on $\beta$ that 
\begin{equation}\label{coadditivity}
f^{\alpha + \beta} = f^\beta \circ f^\alpha.
\end{equation}
We call this property \emph{left additivity}. We should remark that in this paper we often denote composition by simple juxtaposition, so we will often write left additivity as $f^{\alpha + \beta} = f^\beta f^\alpha$. We are also relaxed about writing parentheses around arguments, using $f(\xi)$ and $f\xi$ indistinctly.

Left additivity is a natural condition to demand of a sequence of transfinite iterations of a function $f$. Indeed, one would readily associate it with iteration and, for example, expect that
\[f^{\omega+1}=f\circ f^\omega;\]
\cite{simmons}, for example, uses left-additivity in defining iterations. Unfortunately, there are no non-trivial left-additive sequences of normal, or even injective, functions:

\begin{lemm}
Suppose that the sequence of functions $\langle f^\xi\rangle_{\xi\in{\sf On}}$ satisfies (\ref{coadditivity}) and $f$ is not the identity. Then, for any infinite ordinal $\xi$, $f^\xi$ is not injective.
\end{lemm}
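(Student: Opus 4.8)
The plan is to exploit left additivity in the form (\ref{coadditivity}), namely $f^{\alpha+\beta}=f^\beta\circ f^\alpha$, to show that for infinite $\xi$ the function $f^\xi$ must collapse two distinct arguments to the same value. The key observation is that an infinite ordinal $\xi$ can be written as a nontrivial sum in a way that produces a contradiction with injectivity. First I would note that if $f$ is not the identity, then there is some ordinal $\gamma$ with $f(\gamma)\neq\gamma$; since $f^1=f$ and the sequence starts at $f^0={\sf id}$, this gives us a concrete point of nontrivial behaviour to leverage.

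The heart of the argument should use the decomposition of an infinite ordinal $\xi$ as $\xi=1+\xi$, which holds precisely because $\xi$ is infinite. Applying left additivity with $\alpha=1$ and $\beta=\xi$ yields
\[
f^\xi=f^{1+\xi}=f^\xi\circ f^1=f^\xi\circ f.
\]
This is the crucial identity: it says $f^\xi(\zeta)=f^\xi(f(\zeta))$ for every ordinal $\zeta$. So I would then take any $\zeta$ with $f(\zeta)\neq\zeta$ — which exists since $f$ is not the identity — and conclude that $f^\xi$ sends the two distinct ordinals $\zeta$ and $f(\zeta)$ to the same value, witnessing that $f^\xi$ is not injective. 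I expect the main content to be precisely this algebraic identity; everything else is bookkeeping.

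The step I expect to require the most care is justifying $\xi=1+\xi$ for infinite $\xi$ and confirming that $\zeta\neq f(\zeta)$ can be chosen. The former is a standard fact of ordinal arithmetic: left addition of a finite ordinal to an infinite one is absorptive, so $1+\xi=\xi$ whenever $\xi\geq\omega$. The latter follows directly from the hypothesis that $f$ is not the identity, meaning $f(\zeta)\neq\zeta$ for at least one $\zeta$. I would want to double-check that (\ref{coadditivity}) is genuinely available as a hypothesis rather than something I must reprove; since the lemma explicitly assumes the sequence satisfies (\ref{coadditivity}), I may invoke it directly. The only potential subtlety is orientation: I must apply left additivity in the correct order so that the repeated factor is $f^\xi$ and the extracted factor is $f^1=f$, which is exactly what $\xi=1+\xi$ (rather than $\xi=\xi+1$) provides.

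Thus the proof reduces to three lines: pick $\zeta$ with $f(\zeta)\neq\zeta$; observe $f^\xi=f^{1+\xi}=f^\xi\circ f$ using $1+\xi=\xi$ and (\ref{coadditivity}); conclude $f^\xi(\zeta)=f^\xi(f(\zeta))$ with $\zeta\neq f(\zeta)$, so $f^\xi$ is not injective. I anticipate no genuine obstacle, only the need to state the ordinal-arithmetic absorption fact cleanly.
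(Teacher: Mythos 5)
Your proof is correct and is essentially identical to the paper's: both pick a witness $\gamma$ with $f(\gamma)\neq\gamma$, use the absorption $1+\xi=\xi$ for infinite $\xi$, and apply (\ref{coadditivity}) to get $f^\xi(f(\gamma))=f^{1+\xi}(\gamma)=f^\xi(\gamma)$, contradicting injectivity. You also correctly handle the orientation of left additivity, which is the only place one could slip.
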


\proof
Let $\gamma$ be such that $\gamma\not=f(\gamma)$.

Since $\xi$ is infinite we have that $1+\xi=\xi$, and we see that
\[
f^\xi f (\gamma) =  f^{1+\xi} (\gamma) = f^\xi (\gamma).
\]
\endproof

Thus, left additivity is enough to discard having injective iterations of injective functions, which is desirable for our purposes. A first attempt to avoid this problem is to work with a slightly different definition of transfinite iteration:

\begin{definition}[Right transfinite iteration]\label{defli}
Let $f:{\sf On}\to{\sf On}$. We define the {\em right transfinite iteration of $f$} as the unique sequence of functions ${\rm TI}_r[f]=\langle f^\xi\rangle_{\xi\in{\sf On}}$ given by the following recursion:
\begin{enumerate}
\item $f^0  = {\sf id};$ 
\item $f^{\alpha+1} = f^\alpha \circ f;$
\item $f^\lambda \gamma =  \displaystyle\sup_{\alpha<\lambda} f^\alpha \gamma$ for $\lambda\in{\sf Lim}$.
\end{enumerate}
\end{definition}

Note that the only difference with ${\rm TI}_l$ is Clause 2, where $f$ is now applied on the right. If $f$ is continuous, it is easy to see that under this definition each $f^\alpha$ is continuous and by induction on $\beta$ we can prove that
\begin{equation}\label{additivity}
f^{\alpha+\beta} = f^\alpha \circ f^\beta.
\end{equation}
We refer to this property as \emph{right additivity}.

In fact this condition does have advantages; (\ref{additivity}) alone is compatible with injectivity and even strict monotonicity of all iterates even if $f$ is not the identity. However, the recursion ${\rm TI}_r[f]$ is still not too useful when $f$ is a normal function, as it does not in general yield normal functions. For example, if we consider the normal ordinal function $f\equiv \alpha \mapsto 2^\alpha$ then our definition gives 
\[
f^\omega 0 = f^\omega 1 = \omega.
\]

What is more serious is that both ${\rm TI}_l$ and ${\rm TI}_r$ stabilize after $\omega$ iterations when $f$ is continuous:
\begin{lemm}\label{stagnates}
Let $f$ be a continuous function, and let ${\rm TI}_l[f]=\langle g^\xi\rangle_{\xi\in{\sf On}}$, ${\rm TI}_r[f]=\langle h^\xi\rangle_{\xi\in{\sf On}}$.

Then, for all $\xi\geq\omega$, $g^\omega=g^\xi=h^\xi$.
\end{lemm}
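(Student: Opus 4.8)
The plan is to show that, beyond stage $\omega$, both sequences collapse to the single function $F$ defined by $F\gamma = \sup_{n<\omega} f^n\gamma$, where $f^n$ denotes the ordinary $n$-fold composite of $f$ with itself. Here I use, besides continuity, the monotonicity of $f$ (every function treated in this section being normal, hence monotone and inflationary, $f\delta\geq\delta$); note that without inflationariness the statement already fails, e.g.\ for the continuous function $f\equiv 0$.

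First I would observe that the two recursions agree below $\omega$: a trivial induction gives $g^n=h^n=f^n$ for every finite $n$, since left and right application of $f$ coincide for finite composites by associativity. As the limit clause is identical in both definitions, it follows that $g^\omega=h^\omega=:F$, with $F\gamma=\sup_{n<\omega} f^n\gamma$. Because $f$ is inflationary the sequence $\langle f^n\gamma\rangle_{n<\omega}$ is non-decreasing, so this supremum is genuinely its limit.

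The crux, and the step I expect to be the only real obstacle, is to prove that $F\gamma$ is a fixed point of $f$, that is $f(F\gamma)=F\gamma$, for every $\gamma$; this is precisely where continuity enters. Since a monotone continuous $f$ commutes with suprema of non-decreasing sequences, one has
\[ f(F\gamma)=f\Big(\sup_{n<\omega} f^n\gamma\Big)=\sup_{n<\omega} f(f^n\gamma)=\sup_{n<\omega} f^{n+1}\gamma=F\gamma, \]
the last equality holding because deleting the term $n=0$ from a non-decreasing sequence does not alter its supremum. Rigorously the middle equality splits into two cases: if the sequence is eventually constant then $F\gamma$ is attained and $f(F\gamma)=F\gamma$ is immediate, while if it is strictly increasing then $F\gamma$ is a limit and genuine continuity, $f(F\gamma)=\sup_{\xi<F\gamma}f\xi$, is invoked together with cofinality of the iterates. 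The very same computation applied to $f\gamma$ in place of $\gamma$ also yields $F(f\gamma)=F\gamma$.

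With these two identities in hand both collapses follow by transfinite induction on $\xi\geq\omega$. For the left iteration: at a successor $\zeta+1$ with $\zeta\geq\omega$ the induction hypothesis gives $g^\zeta=F$, whence $g^{\zeta+1}=f\circ g^\zeta=f\circ F=F$ by the fixed-point property; at a limit $\lambda>\omega$ we have $g^\lambda\gamma=\sup_{\alpha<\lambda}g^\alpha\gamma$, and since $g^\alpha=F$ for all $\alpha$ with $\omega\leq\alpha<\lambda$ (such $\alpha$ exist as $\lambda>\omega$) while $g^\alpha\gamma=f^\alpha\gamma\leq F\gamma$ for $\alpha<\omega$, this supremum equals $F\gamma$. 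The argument for the right iteration is identical, except that at successors one uses $h^{\zeta+1}=h^\zeta\circ f=F\circ f=F$, now invoking $F(f\gamma)=F\gamma$. Combined with $g^\omega=h^\omega=F$, this gives $g^\omega=g^\xi=h^\xi$ for every $\xi\geq\omega$, as required.
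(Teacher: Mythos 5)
Your proof is correct and follows essentially the same route as the paper's (sketched) argument: the key step in both is that continuity lets $f$ pass through the supremum of the finite iterates, giving $f\circ g^\omega=g^\omega$ (and, for the right iteration, $g^\omega\circ f=g^\omega$), after which everything collapses by a simple transfinite induction. Your side remark is also well taken: as literally stated for an arbitrary continuous $f$ the lemma fails (e.g.\ for $f\equiv 0$, where $g^\omega=\mathsf{id}$ but $g^{\omega+1}\equiv 0$), so one must invoke the section's standing assumption that $f$ is normal---hence inflationary---exactly where you do, namely to ensure $\langle f^n\gamma\rangle_{n<\omega}$ is non-decreasing so that dropping its first term does not change the supremum.
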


We will skip the proof which follows a simple induction, but it suffices to observe that
\[
g^{\omega+1}(\alpha)=fg^\omega (\alpha)=f\lim_{n \to\omega}f^n (\alpha)=\lim_{n \to\omega}f^{n+1} (\alpha)=g^\omega(\alpha);
\]
note that we are using the continuity of $f$ in the third equality.

Evidently, having notions of iteration which stagnate at $\omega$ is highly undesirable. One way out of this is to drop the limit condition from ${\rm TI}_r$ and replace it by (\ref{additivity}). Thus we arrive at a notion of transfinite iteration that we call \emph{weak hyperation}; a weak hyperation of a function $f$ is any sequence of functions $\langle g^\xi\rangle_{\xi\in{\sf On}}$ such that
\[
\begin{array}{llll}
g^0  & = & {\sf id}; & \\
g^{1} & = & f ; & \\
g^{\alpha + \beta} & =& g^\alpha \circ g^\beta.
\end{array}
\]
As we shall see, weak hyperations already have many desirable properties. However, they are not uniquely defined by the above condition; any normal function $f$ has many weak hyperations. A canonical candidate may be chosen by imposing a minimality constraint. We shall refer to this unique minimal weak hyperation as {\em the} {hyperation} of $f$. In Section \ref{section:Hyperations}, we will give a detailed definition as well as a thorough treatment of the basic properties of hyperations.

As mentioned, we shall prove that there is a very tight connection between Veblen progressions and hyperations and in particular that hyperations can be defined in terms of Veblen progressions, and vice-versa. However, one of the main advantages of our hyperations is that they allow for a uniform treatment of left-inverses with various desirable properties. We call these left-inverses \emph{cohyperations}.

Cohyperations transfinitely iterate {\em initial functions}, that is, functions mapping initial segments to initial segments. The definition is similar to that of hyperations, except that a cohyperation $\langle g^\xi\rangle_{\xi\in\mathsf{On}}$ is left-additive and pointwise {\em maximal} instead of minimal. These will be treated in Section \ref{section:Cohyperations}.

\section{Ordinal arithmetic}\label{section:OrdinalArithmetic}

%%%%%%%%%%%%%%%%%%%%%%%%%%%%%%%%%%%%%%%%%%%%%%%%%%%%

Before continuing, let us give a brief review of some basic notions of ordinal arithmetic. We skip most proofs; for further details, we refer the reader to a text such as \cite{JechsBook} or \cite{PohlersBook}. 

Ordinals are canonical representatives for well-orders. The first infinite ordinal is as always denoted by $\omega$. Most operations on natural numbers can be extended to ordinal numbers, like addition, multiplication and exponentiation (see \cite{PohlersBook}). However, in the realm of ordinal arithmetic things often become more subtle. For example $1 + \omega = \omega \neq \omega + 1$, so addition is non-commutative. Other operations also differ considerably from ordinary arithmetic.

Fortunately, there are various similarities. In particular we have a form of subtraction available.
\begin{lemma}\label{theorem:BasicPropertiesOrdinalArithmetic}
\

\begin{enumerate}
\item Whenever $\zeta {<} \xi$, there exists a unique ordinal $\eta$ such that $\zeta + \eta = \xi$. We will denote this unique $\eta$ by $-\zeta + \xi$.

\item
Given $\eta>0$, there exist ordinals $\alpha,\beta$ such that $\eta = \alpha + \omega^{\beta}$. The value of $\beta$ is uniquely defined.
We will denote this unique $\beta$ by $\le \eta$.

\item
For all $\eta>0$, there exist $\alpha, \beta$ such that $\eta = \omega^{\alpha} + {\beta}$ and $\beta < \omega^{\alpha}+\beta$. The values of both $\alpha$ and $\beta$ are uniquely defined. We denote $\alpha$ by $L\eta$.
\end{enumerate}
\end{lemma}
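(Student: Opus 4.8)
The plan is to dispatch the three items in order, since items 2 and 3 both rely on the subtraction furnished by item 1. Throughout I would use only the well-ordering of ${\sf On}$, the strict monotonicity and continuity of ordinal addition in its right argument, and a couple of standard facts about Cantor normal form (CNF): in particular that the ordinals of the form $\omega^\beta$ are exactly the \emph{additively principal} ones, meaning $\delta + \omega^\beta = \omega^\beta$ whenever $\delta < \omega^\beta$ (equivalently, $\delta_1,\delta_2 < \omega^\beta$ implies $\delta_1 + \delta_2 < \omega^\beta$).

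For item 1, existence is read off from order types: given $\zeta < \xi$, the set $\{\gamma : \zeta \leq \gamma < \xi\}$ is a final segment of the well-order $\xi$, and if $\eta$ denotes its order type then decomposing $\xi$ into the initial segment $\zeta$ followed by this final segment yields exactly $\zeta + \eta = \xi$. (A short transfinite induction on $\xi$ works equally well.) Uniqueness is immediate from strict monotonicity of $\eta \mapsto \zeta + \eta$, which makes it injective; this cancellation property is what I reuse below.

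For item 3 I would avoid CNF altogether. First I show there is a unique $\alpha$ with $\omega^\alpha \leq \eta < \omega^{\alpha+1}$: the set $\{\gamma : \omega^\gamma \leq \eta\}$ is nonempty (it contains $0$ since $\eta \geq 1$), downward closed, and bounded (as $\omega^{\eta+1} > \eta$), hence is itself an ordinal; continuity of $\gamma \mapsto \omega^\gamma$ forces its supremum to be attained, giving the desired maximal $\alpha$. Setting $\beta := -\omega^\alpha + \eta$ by item 1 gives $\eta = \omega^\alpha + \beta$, and since $\eta < \omega^{\alpha+1} = \omega^\alpha + \omega^{\alpha+1}$, cancellation yields $\beta < \omega^{\alpha+1}$, which is exactly equivalent to the required $\beta < \omega^\alpha + \beta$ (if $\beta \geq \omega^{\alpha+1}$ then $\omega^\alpha$ is absorbed and $\omega^\alpha + \beta = \beta$, while if $\beta < \omega^{\alpha+1}$ the leading power of $\beta$ is at most $\omega^\alpha$ and the sum strictly increases). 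For uniqueness, any admissible representation forces $\beta < \omega^{\alpha+1}$, hence $\omega^\alpha \leq \eta < \omega^\alpha + \omega^{\alpha+1} = \omega^{\alpha+1}$, so $\alpha$ is pinned down by the interval characterization and then cancellation pins down $\beta$.

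Item 2 is where genuine care is required, and I expect its uniqueness clause to be the main obstacle. Existence is easy and again needs no normal form: iterating item 3, write $\eta = \omega^{\alpha_0} + \beta_0$ with $\beta_0 < \eta$ and recurse on $\beta_0$; since the residues strictly decrease this terminates by well-foundedness, and the last power of $\omega$ produced is the desired $\omega^\beta$. For uniqueness of $\beta$, the cleanest route is via Cantor normal form: given any $\eta = \alpha + \omega^\beta$, expand $\alpha = \omega^{\delta_1} + \dots + \omega^{\delta_m}$ and append $\omega^\beta$, then use additive principality to absorb every trailing summand whose exponent lies below $\beta$; what survives is a genuine normal form of $\eta$ ending in $\omega^\beta$, so by uniqueness of CNF this final exponent is forced and thus independent of the representation. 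The one thing to verify carefully is precisely this absorption step — that after deleting the terms below $\omega^\beta$ the remaining exponents are still weakly decreasing down to $\beta$ — after which the conclusion follows purely formally from CNF uniqueness and the cancellation already established. (Alternatively one can bypass CNF entirely and prove uniqueness by induction on $\eta$, splitting on whether the $\beta_0$ of item 3 vanishes and invoking item 3's uniqueness at each step.)
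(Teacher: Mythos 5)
Your proof is correct in its essentials, but there is nothing in the paper to compare it against: this lemma sits in the ordinal-arithmetic review section, where the authors explicitly skip proofs and defer to standard texts (Jech, Pohlers). What your write-up buys is a genuinely self-contained derivation, and your dependency order (item 1, then item 3, then item 2) is well chosen: it yields the interval characterization $\omega^\alpha \leq \eta < \omega^{\alpha+1}$, and hence item 3, without ever invoking Cantor normal form, and reserves CNF uniqueness (or your sketched CNF-free induction) for the uniqueness clause of item 2 alone. This ordering also avoids any circularity with the CNF theorem, which the paper states only \emph{after} this lemma, and which is itself usually proved by iterating exactly the item-3 decomposition you establish first.

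One step you should tighten. In item 3 you justify the implication ``$\beta < \omega^{\alpha+1}$ implies $\beta < \omega^\alpha + \beta$'' by appealing to ``the leading power of $\beta$'' --- but the leading power is precisely the notion item 3 is in the middle of defining, so as written this is mildly circular. The repair is one line and uses only tools already on your table: since $\beta \leq \omega^\alpha + \beta$ always holds, failure of the strict inequality means $\omega^\alpha + \beta = \beta$; iterating gives $\omega^\alpha \cdot n + \beta = \beta$, hence $\omega^\alpha \cdot n \leq \beta$ for every $n < \omega$, and taking suprema yields $\omega^{\alpha+1} = \omega^\alpha \cdot \omega \leq \beta$, contradicting $\beta < \omega^{\alpha+1}$. (The converse direction, absorption when $\beta \geq \omega^{\alpha+1}$, you already prove cleanly via item 1, and that is the only direction your uniqueness argument needs.) With that patch, all three items are fully proved.
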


One of the most useful way to represent ordinals is through their Cantor normal form (CNF):

\begin{theorem}[Cantor normal form theorem] \ \\
For each ordinal $\alpha$ there is a unique sequence of ordinals $\alpha_1\geq \ldots \geq \alpha_n$ such that 
\[
\alpha = \omega^{\alpha_1} + \ldots + \omega^{\alpha_n}.
\] 
\end{theorem}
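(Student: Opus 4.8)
The plan is to prove existence by transfinite induction on $\alpha$ and uniqueness by induction on the length $n$ of the representation, in both cases relying on ordinal subtraction (Lemma \ref{theorem:BasicPropertiesOrdinalArithmetic}(1)) together with the elementary fact that $\omega^{\delta}\cdot k < \omega^{\delta}\cdot\omega = \omega^{\delta+1}$ for every finite $k$. The whole argument revolves around identifying the \emph{leading exponent} of $\alpha$, which turns out to be determined by $\alpha$ alone as the unique $\delta$ with $\omega^{\delta}\leq\alpha<\omega^{\delta+1}$; this single observation drives both halves.

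For existence, the case $\alpha=0$ is covered by the empty sum ($n=0$), so assume $\alpha>0$. First I would isolate the leading exponent: the class $\{\gamma : \omega^{\gamma}\leq\alpha\}$ is nonempty (it contains $0$, since $\omega^{0}=1\leq\alpha$) and bounded above (for instance $\omega^{\alpha+1}>\alpha$), hence has a greatest element $\alpha_1$. By Lemma \ref{theorem:BasicPropertiesOrdinalArithmetic}(1) there is a unique $\beta$ with $\alpha=\omega^{\alpha_1}+\beta$. The crucial sub-step is that $\beta<\alpha$, so that the induction hypothesis applies: if instead $\beta=\alpha$, then $\omega^{\alpha_1}+\alpha=\alpha$, and re-substituting this identity into itself yields $\omega^{\alpha_1}\cdot k+\alpha=\alpha$, hence $\omega^{\alpha_1}\cdot k\leq\alpha$, for every finite $k$; taking the supremum gives $\omega^{\alpha_1+1}\leq\alpha$, contradicting the maximality of $\alpha_1$. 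Applying the induction hypothesis to $\beta$ produces $\beta=\omega^{\gamma_1}+\cdots+\omega^{\gamma_m}$ with $\gamma_1\geq\cdots\geq\gamma_m$, and since $\omega^{\gamma_1}\leq\beta<\alpha$ the maximality of $\alpha_1$ forces $\gamma_1\leq\alpha_1$. Thus $\alpha=\omega^{\alpha_1}+\omega^{\gamma_1}+\cdots+\omega^{\gamma_m}$ is a representation of the required non-increasing form.

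For uniqueness, I would first show that any non-increasing representation $\alpha=\omega^{\alpha_1}+\cdots+\omega^{\alpha_n}$ obeys $\omega^{\alpha_1}\leq\alpha<\omega^{\alpha_1+1}$. The lower bound is immediate, while the upper bound follows from $\alpha\leq\omega^{\alpha_1}\cdot n<\omega^{\alpha_1+1}$, using that each term satisfies $\omega^{\alpha_i}\leq\omega^{\alpha_1}$ together with the elementary fact above. Consequently $\alpha_1$ must equal the unique $\delta$ with $\omega^{\delta}\leq\alpha<\omega^{\delta+1}$, so any two representations of $\alpha$ share the same leading exponent. Cancelling it on the left via the uniqueness clause of Lemma \ref{theorem:BasicPropertiesOrdinalArithmetic}(1) reduces both representations to representations of the same ordinal $\beta=-\omega^{\alpha_1}+\alpha$, which is strictly smaller, and the induction hypothesis then matches the remaining exponents term by term.

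Since all the calculations are routine, the real work is just organizing the two key inequalities cleanly: that the greatest $\gamma$ with $\omega^{\gamma}\leq\alpha$ exists and leaves a remainder $\beta<\alpha$ (which guarantees termination of the existence induction), and that every non-increasing representation satisfies $\alpha<\omega^{\alpha_1+1}$ (which forces the leading exponent in the uniqueness argument). Both of these rest on the same principle, namely that finitely many copies of $\omega^{\delta}$ never reach $\omega^{\delta+1}$; once this is granted, the remainder of the proof is bookkeeping with ordinal subtraction.
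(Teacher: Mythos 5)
Your proof is correct, but there is nothing in the paper to compare it against: the Cantor normal form theorem appears in Section \ref{section:OrdinalArithmetic} as quoted background, with all proofs explicitly delegated to standard references such as \cite{JechsBook} and \cite{PohlersBook}. What you have written is essentially the standard textbook argument: isolate the leading exponent as the greatest $\delta$ with $\omega^{\delta}\leq\alpha$, subtract it using Lemma \ref{theorem:BasicPropertiesOrdinalArithmetic}(1), show the remainder is strictly smaller, and induct; then pin down uniqueness by trapping $\alpha$ in $[\omega^{\alpha_1},\omega^{\alpha_1+1})$ and cancelling on the left. Your two ``key inequalities'' (the remainder drops, and finitely many copies of $\omega^{\delta}$ stay below $\omega^{\delta+1}$) are exactly the load-bearing points of that standard proof, and your handling of them is sound.

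One step is asserted rather than argued: that the nonempty bounded class $\{\gamma : \omega^{\gamma}\leq\alpha\}$ has a \emph{greatest} element, not merely a supremum. This needs the continuity of $\xi\mapsto\omega^{\xi}$: if the supremum $\delta$ is a limit, then $\omega^{\delta}=\sup_{\gamma<\delta}\omega^{\gamma}\leq\alpha$, so $\delta$ itself belongs to the class; likewise, your boundedness remark ($\omega^{\alpha+1}>\alpha$) only bounds the class once monotonicity of exponentiation is invoked. Both facts are immediate from the normality of $\omega$-exponentiation, which the paper takes as given, so this is a cosmetic omission rather than a genuine gap; it would be worth one sentence in a polished write-up.
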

We call a function $f$ \emph{increasing} if $\alpha < \beta$ implies $f(\alpha) < f(\beta)$. An ordinal function is called \emph{continuous} if $\lim_{\zeta\to\xi}f(\zeta) = f(\xi)$ for all limit ordinals $\xi$. Functions which are both increasing and continuous are called \emph{normal}. 

It is not hard to see that each normal function has an unbounded set of fixpoints. For example, the first fixpoint of the function $\varphi: \xi\mapsto \omega^\xi$ is
\[\sup \{  \omega, \omega^{\omega},\omega^{\omega^{\omega}}, \ldots \}\]
and is denoted $\varepsilon_0$. Clearly for these fixpoints, CNFs are not too informative as, for example, $\varepsilon_0 = \omega^{\varepsilon_0}$. To remedy this, one may use notations and normal forms that are slightly more informative and can represent the fixpoints of normal functions, as is the case of Veblen Normal Forms (VNFs).

In his seminal paper \cite{Veblen:1908}, Veblen considered for each normal function $f$ its derivative $f'$ that enumerates the fixpoints of $f$. 
%Taking derivatives can be transfinitely iterated:
%\[
%\begin{array}{llll}
%f_{\alpha+1} &:=&(f_{\alpha})';& \\
%f_{\lambda} &:=& \cap_{\alpha<\lambda} f_{\alpha}& \mbox{ for limit $\lambda$}.
%\end{array}
%\]
% 
If $f$ is a normal function, then the image of $f$ --which we shall denote by $F$-- is a closed (under taking suprema) unbounded set. Likewise the function that enumerates a closed unbounded set is continuous.  
For $f$ a normal function, we define $F'$ to be the image of $f'$ and we extend this transfinitely by setting
\[
\begin{array}{llll}
F_0&:=&F;&\\
F_{\alpha+1} &:=&(F_{\alpha})';& \\
F_{\lambda} &:=& \displaystyle\bigcap_{\alpha<\lambda} F_{\alpha}& \mbox{ for limit $\lambda$}.
\end{array}
\]
We then define $f_\lambda$ to be the function that enumerates $F_{\lambda}$ and write ${\rm Veb}[f]=\langle f_\lambda\rangle_{\lambda\in{\sf On}}$.

By taking $\varphi := \xi\mapsto\omega^{\xi}$, one obtains the familiar Veblen functions $\varphi_{\alpha}$. However, Veblen progressions can be constructed out of {\em any} normal function.

We can often write an ordinal $\omega^{\alpha}$ in many ways as $\varphi_{\xi}(\eta)$. However, if we require that $\eta < \varphi_{\xi}(\eta)$, then both $\xi$ and $\eta$ are uniquely determined. In other words, for every $\alpha$, there are unique ordinals $\eta,\xi$ such that
\begin{enumerate}
\item $\omega^{\alpha} = \varphi_{\xi}(\eta)$ and
\item$\eta < \varphi_{\xi}(\eta)$.
\end{enumerate}
Combining this fact with the CNF Theorem one obtains {\em Veblen Normal Forms} for ordinals.
\begin{theorem}[Veblen Normal Form Theorem]\label{vnft}
For all $\alpha$ there exist unique $\alpha_1, \beta_1, \ldots ,\alpha_n,\beta_n$ ($n\geq 0$) such that 
\begin{enumerate}
\item
$\alpha = \varphi_{\alpha_1}(\beta_1) + \ldots + \varphi_{\alpha_n}(\beta_n)$,

\item
$\varphi_{\alpha_i}(\beta_i) \geq \varphi_{\alpha_{i+1}}(\beta_{i+1})$ for $i<n$,

\item
$\beta_i < \varphi_{\alpha_i}(\beta_i)$ for $i\leq n$.
\end{enumerate}
\end{theorem}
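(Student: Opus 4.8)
The plan is to reduce the Veblen Normal Form to the Cantor Normal Form, using the pointwise fact stated just above the theorem (that $\omega^\alpha = \varphi_\xi(\eta)$ with $\eta < \varphi_\xi(\eta)$ has a unique solution) as the bridge between the two. For existence, first I would write $\alpha$ in Cantor Normal Form as $\alpha = \omega^{\gamma_1} + \ldots + \omega^{\gamma_n}$ with $\gamma_1 \geq \ldots \geq \gamma_n$. To each summand $\omega^{\gamma_i}$ I apply that fact to obtain the unique pair $\alpha_i, \beta_i$ with $\omega^{\gamma_i} = \varphi_{\alpha_i}(\beta_i)$ and $\beta_i < \varphi_{\alpha_i}(\beta_i)$. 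Substituting these into the CNF immediately yields condition 1, while condition 3 is exactly the inequality furnished by the fact. Condition 2 then comes for free: since $\varphi_{\alpha_i}(\beta_i) = \omega^{\gamma_i}$ and $\delta \mapsto \omega^{\delta}$ is monotone, $\gamma_i \geq \gamma_{i+1}$ gives $\varphi_{\alpha_i}(\beta_i) \geq \varphi_{\alpha_{i+1}}(\beta_{i+1})$.

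For uniqueness, the key observation---and the step I expect to carry the real weight---is that every term $\varphi_{\alpha_i}(\beta_i)$ arising in a VNF is additively principal, i.e.\ a power of $\omega$. Indeed $\varphi_0(\beta) = \omega^\beta$, and for $\alpha_i > 0$ the ordinal $\varphi_{\alpha_i}(\beta_i)$ is a common fixpoint of all $\varphi_\delta$ with $\delta < \alpha_i$, in particular a fixpoint of $\varphi_0$, so that $\varphi_{\alpha_i}(\beta_i) = \omega^{\varphi_{\alpha_i}(\beta_i)}$. Hence any representation satisfying conditions 1--3 exhibits $\alpha$ as a weakly decreasing (by condition 2) sum of powers of $\omega$---that is, as a Cantor Normal Form of $\alpha$.

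With this in hand the uniqueness splits into two applications of already-established uniqueness results. By the uniqueness part of the Cantor Normal Form Theorem, the sequence of summands $\langle \varphi_{\alpha_1}(\beta_1), \ldots, \varphi_{\alpha_n}(\beta_n)\rangle$ is determined by $\alpha$ alone (in particular $n$ is fixed). Then, since each such summand is a power of $\omega$, the uniqueness clause of the fact preceding the theorem tells us that the pair $(\alpha_i, \beta_i)$ is the unique solution of $\varphi_{\alpha_i}(\beta_i) = \omega^{\delta_i}$ with $\beta_i < \varphi_{\alpha_i}(\beta_i)$, so each individual pair is recovered uniquely. Combining the two steps gives uniqueness of the whole VNF.

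The only genuinely delicate point is the additive-principality claim; once one knows each VNF summand is a power of $\omega$, the theorem is essentially a repackaging of the CNF theorem. I would therefore isolate that claim as the main lemma, proving it from the defining property of the Veblen hierarchy (that $\varphi_{\alpha}$ enumerates the simultaneous fixpoints of the earlier $\varphi_\delta$), and treat the decreasing-order condition together with the boundary case $n=0$ (that is, $\alpha = 0$ given by the empty sum) as routine.
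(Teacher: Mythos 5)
Your proposal is correct and follows exactly the route the paper itself indicates: the paper gives no detailed proof, but the remark immediately preceding the theorem (``Combining this fact with the CNF Theorem one obtains Veblen Normal Forms'') is precisely your reduction, namely existence by rewriting each CNF summand $\omega^{\gamma_i}$ uniquely as $\varphi_{\alpha_i}(\beta_i)$ with $\beta_i < \varphi_{\alpha_i}(\beta_i)$, and uniqueness by inverting that correspondence. Your isolation of the additive-principality lemma (each VNF summand is either a value of $\varphi_0$ or, when $\alpha_i>0$, a fixpoint of $\varphi_0$, hence a power of $\omega$, so any VNF is itself a CNF) is the right way to make the uniqueness half of that sketch precise.
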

\noindent
Note that $\alpha_i\geq\alpha_{i+1}$ does not in general hold in the VNF of $\alpha$; for example,
\[\omega^{\varepsilon_0+1} + \varepsilon_0 = \varphi_0(\varphi_{\varphi_0(0)}(0)+\varphi_0(0)) + \varphi_{\varphi_0(0)}(0).\]

It is convenient to also define Veblen functions ``from below''. For this we use the following useful fact:
\begin{lemm}\label{fixpoints}
Let $f$ be a normal function and $\xi$ an ordinal. Then, $\zeta=\lim_{n\to\omega}f^n(\xi)$ is the least fixed point of $f$ greater or equal to $\xi$, that is, it is the least ordinal such that \[\xi\leq\zeta=f(\zeta).\]
\end{lemm}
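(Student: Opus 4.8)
The plan is to show two things about $\zeta=\lim_{n\to\omega}f^n(\xi)$: first that it is a fixed point of $f$ lying above $\xi$, and second that it is the \emph{least} such fixed point. I would begin by establishing that the sequence $\langle f^n(\xi)\rangle_{n<\omega}$ (ordinary finite iterates) is weakly increasing. Since $f$ is normal it is in particular increasing, and $\xi\leq f(\xi)$ need not hold in general, so I should first check the base case $\xi\leq f(\xi)$. If $\xi\leq f(\xi)$ fails, i.e. $f(\xi)<\xi$, then because $f$ is strictly increasing one would get a strictly descending sequence $\xi>f(\xi)>f^2(\xi)>\cdots$, contradicting well-foundedness of the ordinals; hence $\xi\leq f(\xi)$ always holds. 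Applying $f$ and monotonicity repeatedly yields $f^n(\xi)\leq f^{n+1}(\xi)$ for all $n$, so the sequence is weakly increasing and $\zeta=\sup_{n<\omega}f^n(\xi)$ is well defined with $\xi\leq\zeta$.

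Next I would prove $f(\zeta)=\zeta$. The key tool is continuity of $f$. There are two cases depending on whether the sequence $\langle f^n(\xi)\rangle$ is eventually constant. If it stabilizes at some finite stage, then $f^{n+1}(\xi)=f^n(\xi)$ for large $n$, so $\zeta=f^n(\xi)$ is already a fixed point. Otherwise the sequence is cofinal in a limit and $\zeta$ is a genuine limit of the strictly increasing tail; then by continuity of $f$,
\[
f(\zeta)=f\Big(\lim_{n\to\omega}f^n(\xi)\Big)=\lim_{n\to\omega}f\big(f^n(\xi)\big)=\lim_{n\to\omega}f^{n+1}(\xi)=\zeta,
\]
where the last equality holds because $\langle f^{n+1}(\xi)\rangle$ has the same supremum as $\langle f^n(\xi)\rangle$. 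This mirrors exactly the continuity argument already used in the sketch following Lemma \ref{stagnates}, so it should go through cleanly.

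Finally I would show minimality: if $\theta$ is any ordinal with $\xi\leq\theta=f(\theta)$, then $\zeta\leq\theta$. I would prove by induction on $n$ that $f^n(\xi)\leq\theta$ for every $n$. The base case is the hypothesis $\xi\leq\theta$; for the inductive step, from $f^n(\xi)\leq\theta$ and monotonicity of $f$ we get $f^{n+1}(\xi)\leq f(\theta)=\theta$, using that $\theta$ is a fixed point. Taking the supremum over $n$ gives $\zeta\leq\theta$. Combined with the previous paragraph, $\zeta$ is a fixed point with $\xi\leq\zeta$ and is below every such fixed point, hence it is the least fixed point $\geq\xi$, as claimed.

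I do not expect any serious obstacle here; the statement is a standard fixed-point computation. The only point requiring a little care is the very first step, namely justifying $\xi\leq f(\xi)$ rather than assuming it, since normality alone does not guarantee that an arbitrary normal function moves points upward at a given $\xi$ — this is where well-foundedness of $\mathsf{On}$ does the real work. Everything else is a routine combination of monotonicity (for minimality) and continuity (for the fixed-point equation).
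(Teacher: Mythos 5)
Your proof is correct, and while the fixed-point half matches the paper, the minimality half takes a genuinely different route. For the fixed-point equation the paper just cites Lemma \ref{stagnates}, whose proof is the continuity computation you reproduce; your explicit case split between an eventually constant sequence and a strictly increasing tail is in fact a point of extra rigour, since continuity of $f$ only speaks about limit ordinals and the supremum of an eventually constant sequence need not be one. The divergence is in minimality: you use the standard absorption argument --- for any fixed point $\theta$ with $\xi\leq\theta$, induction on $n$ gives $f^n(\xi)\leq f(\theta)=\theta$, hence $\zeta\leq\theta$ --- whereas the paper argues that no ordinal strictly between $\xi$ and $\zeta$ can be a fixed point: assuming $f(\xi)\neq\xi$, the iterates are strictly increasing, so for $\eta\in(\xi,\zeta)$ one may take the greatest $N$ with $f^N(\xi)<\eta$, and maximality of $N$ forces $\eta\leq f^{N+1}(\xi)<f(\eta)$ (the paper's ``$f^{N+1}(\eta)<f(\eta)$'' is evidently a typo for this). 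The two arguments deliver the same conclusion; yours is the more elementary and robust one, needing only weak monotonicity and no case distinction on whether $f(\xi)=\xi$, while the paper's is phrased so as to exhibit directly that the interval $(\xi,\zeta)$ is free of fixed points. Your opening observation that $\xi\leq f(\xi)$ must be \emph{proved} (via well-foundedness) rather than assumed is also well taken: the paper uses this fact tacitly when it asserts that $f(\xi)\neq\xi$ makes the iterates strictly increasing.
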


\proof
By Lemma \ref{stagnates}, $f\circ \lim_{n\to\omega}f^n=\lim_{n\to\omega}f^n$, so that $\zeta$ is a fixed point of $f$. 

Now, in order to see that it is the least such fixed point, let us assume that $f(\xi)\not=\xi$ (otherwise $\zeta=\xi$ and there is nothing to prove). Then, by an easy induction it follows that $f^n(\xi)<f^{n+1}(\xi)$ for all $n$.

Pick any $\eta\in(\xi,\zeta)$ and let $N$ be the greatest number such that $f^N(\xi)<\eta$; such an $N$ exists since $\langle f^n(\xi)\rangle_{\xi\in{\sf On}}$ is an increasing sequence converging to $\zeta$.

But then $f^{N+1}(\eta)<f(\eta)$, and by maximality of $N$ we also have that $\eta<f(\eta)$, i.e., $\eta$ is not a fixed point of $f$.
\endproof

As a corollary we also get that $\lim_{n\to\omega}f^n(\xi+1)$ gives the smallest fixpoint of $f$ which is {\em greater} than $\xi$. This observation is an essential ingredient in giving a recursive definition of Veblen progressions:

Below, $f^n_\xi$ denotes $(f_\xi)^n$, which is typically {\em not} equal to $(f^n)_\xi.$

\begin{theorem}\label{vebrec}
Let $f$ be a normal function an let ${\rm Veb}[f]=\langle f_\xi\rangle_{\xi\in{\sf On}}$.

Then,
\begin{enumerate}
\item for all $\xi\in{\sf On}$, $f_{\xi+1}(0)=\displaystyle\lim_{n\to\omega}f^n_{\xi}(0)$

\item for all $\xi,\alpha\in{\sf On}$, $f_{\xi+1}(\alpha+1)=\displaystyle\lim_{n\to\omega}f^n_{\xi}(f_{\xi+1}(\alpha)+1)$

\item for all $\lambda\in{\sf Lim}$, $f_{\lambda}(0)=\displaystyle\lim_{\xi\to\lambda}f_{\xi}(0)$

\item for all $\lambda\in{\sf Lim}$ and $\alpha\in{\sf On}$, $f_{\lambda}(\alpha+1)=\displaystyle\lim_{\xi\to\lambda}f_{\xi}(f_{\lambda}(\alpha)+1)$

\item for all $\xi\in{\sf On}$ and $\alpha\in{\sf Lim}$, $f_{\xi}(\alpha)=\displaystyle\lim_{\beta\to\alpha}f_{\xi}(\beta)$.

\end{enumerate}
\end{theorem}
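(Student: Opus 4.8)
The plan is to read off all five clauses directly from the definition of the Veblen progression via the nested closed classes $F_\xi$, using Lemma \ref{fixpoints} and its corollary for the successor clauses and a closure argument for the limit clauses. First I would record three standing facts. (i) Each $f_\xi$ is normal, being the enumeration of the closed unbounded class $F_\xi$, and hence continuous. (ii) The classes are nested: $F_\xi\supseteq F_\eta$ whenever $\xi\leq\eta$, which follows from $F_{\xi+1}=(F_\xi)'\subseteq F_\xi$ and the intersection taken at limits. (iii) If $\lambda\in{\sf Lim}$ and $\xi<\lambda$ then $\xi+1<\lambda$, so $F_\lambda\subseteq F_{\xi+1}$; thus every element of $F_\lambda$ is a fixpoint of $f_\xi$.

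Clause 5 is then just continuity of $f_\xi$, giving $f_\xi(\alpha)=\sup_{\beta<\alpha}f_\xi(\beta)=\lim_{\beta\to\alpha}f_\xi(\beta)$ for $\alpha\in{\sf Lim}$. Clauses 1 and 2 concern $f_{\xi+1}$, which enumerates $F_{\xi+1}$, i.e.\ the fixpoints of $f_\xi$. Clause 1 states that $f_{\xi+1}(0)$, the least fixpoint of $f_\xi$, equals $\lim_{n\to\omega}f^n_\xi(0)$, which is exactly Lemma \ref{fixpoints} applied at $0$. For clause 2, $f_{\xi+1}(\alpha)$ is a fixpoint of $f_\xi$ and $f_{\xi+1}(\alpha+1)$ is by definition the least fixpoint of $f_\xi$ strictly greater than it; by the corollary to Lemma \ref{fixpoints} this least fixpoint above $f_{\xi+1}(\alpha)$ is precisely $\lim_{n\to\omega}f^n_\xi(f_{\xi+1}(\alpha)+1)$.

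Clauses 3 and 4 treat a limit level $\lambda$, where $f_\lambda$ enumerates $F_\lambda=\bigcap_{\xi<\lambda}F_\xi$. Writing $\mu$ for the right-hand side in each case, I would show in both that $\mu$ is the least element of $F_\lambda$ lying above the relevant bound, by two symmetric steps: (a) the sequence whose supremum is $\mu$ is non-decreasing in $\xi$ by fact (ii), so $\mu$ is the supremum of every tail, and each tail starting from $\xi_0$ consists of elements of the closed class $F_{\xi_0}$; hence $\mu\in F_{\xi_0}$ for all $\xi_0<\lambda$ and therefore $\mu\in F_\lambda$; (b) any $\nu\in F_\lambda$ satisfying the bound lies in every $F_\xi$ and so dominates the $\xi$-th term, giving $\nu\geq\mu$. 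For clause 3 the bound is vacuous and the $\xi$-th term $f_\xi(0)$ is just $\min F_\xi$. For clause 4 the decisive point is fact (iii): since $f_\lambda(\alpha)\in F_\lambda\subseteq F_{\xi+1}$ is a fixpoint of $f_\xi$, the value $f_\xi(f_\lambda(\alpha)+1)$ coincides with the least element of $F_\xi$ strictly above $f_\lambda(\alpha)$, rather than overshooting it. Establishing this identification is the main obstacle; once it is in place, the supremum argument for clause 4 runs exactly as for clause 3.
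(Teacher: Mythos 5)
Your proof is correct and follows the same route as the paper, which in fact omits the proof entirely except to remark that Item 2 is precisely Lemma~\ref{fixpoints} (via its corollary) applied to $f_\xi$ inside $F_{\xi+1}$ --- exactly your argument for clauses 1 and 2. Your completion of the remaining clauses, via nestedness of the classes $F_\xi$, closure under suprema along the tails, and the key identification that $f_\lambda(\alpha)$ is a fixpoint of $f_\xi$ so that $f_\xi(f_\lambda(\alpha)+1)$ is the least element of $F_\xi$ strictly above $f_\lambda(\alpha)$, is sound and supplies just what the paper leaves to the reader.
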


Although we will not prove this result, note that Item 2 follows from Lemma \ref{fixpoints}; we have that $\displaystyle\lim_{n\to\omega}f^n_{\xi}(f_{\xi+1}(\alpha)+1)$ is the smallest fixpoint of $f_\xi$ greater than $f_{\xi+1}(\alpha)$, which is precisely the meaning of $f_{\xi+1}(\alpha+1)$.\\\\

We are now ready to begin the study of hyperations.
%%%%%%%%%%%%%%%%%%%%%%%%%%%%%%%%%%%%%%%%

\section{Hyperations}\label{section:Hyperations}

%%%%%%%%%%%%%%%%%%%%%%%%%%%%%%%%%%%%%%%%%%%%%%%%%%%%%%%%%%%%%%%%%%

{\em Hyperation} is a form of transfinite iteration of normal functions. If $f$ is a normal function, we wish to define the hyperates $\langle f^\xi\rangle_{\xi \in {\sf On}}$ of $f$, where $\xi$ is an arbitrary ordinal, in such a way that $f^\xi$ is always a normal function and, in accordance to our considerations from Section \ref{section:AdditivityVersusCoadditivity}, $f^{\xi+\zeta}=f^\xi f^\zeta$. 

%As mentioned before, the na\"ive definition of $f^\xi$ using limits stabilizes at $\omega$; because of this, we propose a different candidate for the sequence $\langle f^\xi\rangle_{\xi\in\mathsf{On}}$. 

\subsection{Weak hyperations and hyperations}

Recall that $\mathsf{On}$ denotes the class of all ordinals and $\mathsf{Lim}$ the class of limit ordinals.

\begin{definition}[Weak hyperation]
Let $f$ be a normal function and $\Lambda$ be either an ordinal or the class of all ordinals.

A {\em weak hyperation} of $f$ is a family of normal functions $\langle g^{\x}\rangle_{\xi<\Lambda}$ such that
\begin{enumerate}
\item $g^0{\x}={\x}$ for all ${\x}$,
\item $g^1=f$,
\item $g^{{\x}+\zeta}=g^{\x} g^\zeta$ whenever $\xi+\zeta<\lambda$.
\end{enumerate}
\end{definition}

In general, a normal funcion has many weak hyperations; the values of $g^{\omega}$ are not uniquely defined, given that $\omega$ cannot be written as a sum of smaller ordinals. The same can be said of all $g^{\omega^\delta}$ for $\delta > 1$. However, weak hyperations are all rather well-behaved, as witnessed by the following lemma:

\begin{lemma}[Properties of weak hyperations]\label{prophyp}
Any weak hyperation $\langle g^{\x}\rangle_{\xi\in{\sf On}}$ has the following properties:
\begin{enumerate}
\item if ${\x}<\zeta$ then $g^{\x}\alpha\leq g^\zeta\alpha$,
\item if ${\x}+\zeta=\zeta$ then $g^{\x} g^\zeta=g^\zeta$,
\item if $\eta<\omega^\rho$ then
\[
g^\eta(g^{\omega^\rho}({\x})+1)\leq g^{\omega^\rho}({\x}+1).
\]
\end{enumerate}
\end{lemma}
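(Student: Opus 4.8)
The plan is to derive all three items from two standard facts about normal functions---that a normal function $h$ is increasing and satisfies $h(\alpha)\geq\alpha$ for every $\alpha$---together with the additivity clause $g^{\xi+\zeta}=g^\xi g^\zeta$ and a little elementary ordinal arithmetic. Items~1 and~2 are immediate; the only item requiring a genuine arithmetic observation is Item~3.

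For Item~1, since $\xi<\zeta$, ordinal subtraction (Lemma~\ref{theorem:BasicPropertiesOrdinalArithmetic}) supplies an $\eta$ with $\xi+\eta=\zeta$, so additivity gives $g^\zeta=g^\xi g^\eta$. As $g^\eta$ is normal we have $g^\eta\alpha\geq\alpha$, and since $g^\xi$ is increasing this yields $g^\zeta\alpha=g^\xi(g^\eta\alpha)\geq g^\xi\alpha$. Item~2 is even more direct: if $\xi+\zeta=\zeta$ then additivity reads $g^\zeta=g^{\xi+\zeta}=g^\xi g^\zeta$, which is precisely the claim.

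For Item~3, the key arithmetic fact is that $\omega^\rho$ is additively indecomposable, i.e. $\eta<\omega^\rho$ implies $\eta+\omega^\rho=\omega^\rho$. Applying additivity to this identity gives $g^{\omega^\rho}=g^{\eta+\omega^\rho}=g^\eta g^{\omega^\rho}$, so in particular $g^\eta\big(g^{\omega^\rho}(\xi+1)\big)=g^{\omega^\rho}(\xi+1)$. Now $g^{\omega^\rho}$ is strictly increasing, whence $g^{\omega^\rho}(\xi)<g^{\omega^\rho}(\xi+1)$ and therefore $g^{\omega^\rho}(\xi)+1\leq g^{\omega^\rho}(\xi+1)$. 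Feeding this inequality into the increasing map $g^\eta$ produces
\[
g^\eta\big(g^{\omega^\rho}(\xi)+1\big)\;\leq\;g^\eta\big(g^{\omega^\rho}(\xi+1)\big)\;=\;g^{\omega^\rho}(\xi+1),
\]
which is the desired bound.

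The argument is therefore short, and the only point I would handle with care---the ``main obstacle,'' such as it is---is the correct use of additive indecomposability: one must absorb the small exponent $\eta$ on the \emph{left} of $\omega^\rho$, matching the left-to-right order of the additivity clause $g^{\xi+\zeta}=g^\xi g^\zeta$. It is precisely because $\eta$ sits on the left that one gets $g^\eta g^{\omega^\rho}=g^{\omega^\rho}$ rather than a composition in the opposite order, and this is what makes Item~3 go through.
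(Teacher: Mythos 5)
Your proof is correct and takes essentially the same route as the paper's: Items 1 and 2 coincide word for word in substance, and for Item 3 you use additive indecomposability ($\eta<\omega^\rho$ implies $\eta+\omega^\rho=\omega^\rho$) to get $g^\eta g^{\omega^\rho}=g^{\omega^\rho}$ and then apply monotonicity of $g^\eta$ to $g^{\omega^\rho}(\xi)+1\leq g^{\omega^\rho}(\xi+1)$, which is exactly the paper's argument.
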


\proof\
\paragraph{1.} If ${\x}<\zeta$ and $\alpha$ is any ordinal, then $g^\zeta \alpha=g^{\x} g^{-{\x}+\zeta}\alpha$; but $g^{-{\x}+\zeta}$ is a normal function so $g^{-{\x}+\zeta}\alpha\geq\alpha$, thus $g^{\x} g^{-{\x}+\zeta}\alpha\geq g^{\x}\alpha$.

\paragraph{2.}By right additivity we have that $g^{\zeta}=g^{{\x}+\zeta}=g^{\x} g^\zeta$.

\paragraph{3.} First note  that
\[g^{\omega^\rho}({\x})+1\leq g^{\omega^\rho}({\x}+1)\]
because $g^{\omega^\rho}$ is increasing.

Meanwhile, $\eta<\omega^\rho$ so $\eta+\omega^\rho=\omega^\rho$, hence
\[g^\eta(g^{\omega^\rho}({\x})+1)\leq
g^\eta g^{\omega^\rho}({\x}+1)=g^{\omega^\rho}({\x}+1).\]
\endproof

\begin{corollary}
Let $f^{\alpha}$ be a weak hyperation. Then all values of $f^{\zeta}$ are fixpoints of $f^{\xi}$ whenever $\xi + \zeta = \zeta$.
\end{corollary}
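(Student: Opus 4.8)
The corollary is an almost immediate consequence of Property 2 in Lemma \ref{prophyp}, so the plan is to unwind the statement and apply that property directly. The claim asserts that whenever $\xi + \zeta = \zeta$, every value $f^\zeta(\alpha)$ is a fixpoint of $f^\xi$; that is, $f^\xi(f^\zeta(\alpha)) = f^\zeta(\alpha)$ for all $\alpha$.

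First I would invoke Lemma \ref{prophyp}(2), which states that if $\xi + \zeta = \zeta$ then $g^\xi g^\zeta = g^\zeta$ as functions. Applying this with the weak hyperation $\langle f^\xi\rangle$ in place of $\langle g^\xi\rangle$ gives the functional identity $f^\xi f^\zeta = f^\zeta$. Then I would simply evaluate both sides at an arbitrary ordinal $\alpha$ to obtain $f^\xi(f^\zeta(\alpha)) = f^\zeta(\alpha)$, which is exactly the statement that $f^\zeta(\alpha)$ is a fixpoint of $f^\xi$.

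There is essentially no obstacle here; the content is entirely contained in the already-established Property 2, and the corollary merely rephrases the functional equation $f^\xi f^\zeta = f^\zeta$ pointwise. The only thing worth remarking is that Property 2 itself rests on right additivity (condition 3 of weak hyperation), since $\xi + \zeta = \zeta$ lets us write $f^\zeta = f^{\xi + \zeta} = f^\xi f^\zeta$; but this is assumed available. Thus the proof is a one-line appeal to Lemma \ref{prophyp}(2) followed by pointwise evaluation.
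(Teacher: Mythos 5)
Your proof is correct and matches the paper's own argument, which is simply ``Immediate from Lemma \ref{prophyp}.2''; you spell out the pointwise evaluation that the paper leaves implicit. Nothing further is needed.
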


\begin{proof}
Immediate from Lemma \ref{prophyp}.2.
\end{proof}

The following lemma tells us that we can weaken the requirement of additivity in the definition of weak hyperations to a few special cases.

\begin{lemma}\label{char}
Let $\vec g=\langle g^{\x}\rangle_{\xi<\Lambda}$ be a family of normal functions.

Then, the following are equivalent:
\begin{enumerate}
\item $\vec g$ is a weak hyperation of $f$.
\item $\vec g$ has the following properties:
\begin{enumerate}
\item $g^0{\x}={\x}$ for all ${\x}$,
\item $g^1=f$,
\item $g^{\omega^\rho+{\x}}=g^{\omega^\rho}g^{\x}$ whenever $\x < \omega^{\rho} + \x<\Lambda$,
\item $g^{\omega^\delta}=g^{\omega^\delta} g^{\omega^\rho}$ whenever $\omega^\delta<\omega^\rho<\Lambda$.
\end{enumerate}
\end{enumerate}
\end{lemma}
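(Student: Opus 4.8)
The plan is to prove the equivalence by showing $(1)\Rightarrow(2)$ trivially and then establishing the substantive direction $(2)\Rightarrow(1)$ by transfinite induction. The forward direction is immediate: if $\vec g$ is a weak hyperation, then full right additivity $g^{\xi+\zeta}=g^\xi g^\zeta$ specializes directly to conditions (c) and (d), since (c) is just the case where the left summand has the form $\omega^\rho$, and (d) follows from additivity together with Lemma \ref{prophyp}.2 (observing that when $\omega^\delta<\omega^\rho$ we have $\omega^\rho+\omega^\delta=\omega^\rho$, so by right additivity $g^{\omega^\delta}g^{\omega^\rho}=g^{\omega^\delta+\omega^\rho}$; but one must be careful here, so the cleaner route uses that $\omega^\delta<\omega^\rho$ gives $\omega^\delta+\omega^\rho=\omega^\rho$ and hence $g^{\omega^\rho}=g^{\omega^\delta}g^{\omega^\rho}$, which is exactly (d) read correctly).

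For the main direction $(2)\Rightarrow(1)$, the goal is to recover full right additivity $g^{\xi+\zeta}=g^\xi g^\zeta$ from the restricted conditions (a)--(d). First I would fix $\xi$ and argue by induction on $\zeta$; the real content is pushing additivity from the building-block cases in (c) and (d) up to arbitrary sums. The key structural tool is the Cantor normal form of $\zeta$: write $\zeta=\omega^{\rho_1}+\cdots+\omega^{\rho_k}$ with $\rho_1\geq\cdots\geq\rho_k$. I would first reduce the general statement to the case where $\zeta$ is a single power $\omega^\rho$, by repeatedly applying condition (c). Concretely, once I know $g^{\omega^{\rho}+\sigma}=g^{\omega^{\rho}}g^{\sigma}$ for all relevant $\sigma$ (which is (c)), a finite iteration over the CNF summands of $\zeta$ reduces $g^{\xi+\zeta}$ to a composition of $g$ evaluated at individual powers, so the crux becomes handling how $g^\xi$ interacts with a single $g^{\omega^\rho}$ appended on the left.

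The hardest step is establishing $g^{\xi+\omega^\rho}=g^\xi g^{\omega^\rho}$ for arbitrary $\xi$. Here one splits $\xi$ according to its CNF. If every exponent appearing in $\xi$ is $\geq\rho$, then $\xi+\omega^\rho$ simply extends the CNF and additivity follows from iterating (c). The delicate case is when $\xi$ has a tail of summands with exponents $<\rho$, say $\xi=\xi'+\omega^{\delta_1}+\cdots+\omega^{\delta_m}$ with all $\delta_j<\rho$; then $\xi+\omega^\rho=\xi'+\omega^\rho$, so the tail is absorbed, and I must show $g^{\xi}g^{\omega^\rho}=g^{\xi'}g^{\omega^\rho}$, i.e.\ that the low-exponent factors $g^{\omega^{\delta_j}}$ are swallowed when composed on the left with $g^{\omega^\rho}$. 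This is precisely where condition (d) does its work: since $\omega^{\delta_j}<\omega^\rho$, condition (d) gives $g^{\omega^\rho}=g^{\omega^{\delta_j}}g^{\omega^\rho}$, and composing these relations lets me absorb each low factor in turn. I would organize this as a short inner induction on the number of tail summands $m$, peeling off one $g^{\omega^{\delta_j}}$ at a time using (d).

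I expect the main obstacle to be bookkeeping the CNF decompositions carefully enough to invoke (c) and (d) with the correct inequalities among the exponents, and in particular verifying that the absorption identity $\xi+\omega^\rho=\xi'+\omega^\rho$ is matched at the level of the functions by exactly the applications of (d) that the hypothesis licenses. The limit cases of the outer induction on $\zeta$ should be handled by normality: both $g^{\xi+\zeta}$ and $g^\xi g^\zeta$ are continuous in the relevant argument, so once additivity holds below a limit it passes to the limit by taking suprema. No genuinely new idea beyond (a)--(d) and CNF is needed; the work is entirely in marshaling these reductions in the right order.
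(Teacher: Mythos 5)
Your core argument is correct and is essentially the route the paper takes: both proofs run on Cantor normal forms, using (c) to compose along order-respecting concatenations and (d) -- read as $g^{\omega^\rho}=g^{\omega^\delta}g^{\omega^\rho}$ for $\omega^\delta<\omega^\rho$ -- to absorb low-exponent factors. You were right to read (d) that way; as literally printed it is a typo, and the paper's own proof invokes it in the form $g^{\omega^\alpha}g^{\omega^\beta}=g^{\omega^\beta}$ for $\alpha<\beta$. The only real difference is bookkeeping: the paper inducts transfinitely on $\xi$, at each step peeling the last CNF term $\omega^\alpha$ of $\xi$ against the first CNF term $\omega^\beta$ of $\zeta$ and splitting on $\alpha<\beta$ versus $\alpha\geq\beta$, while you factor $g^\xi$ and $g^\zeta$ entirely into prime-power factors and induct on the (finite) length of the CNF of $\zeta$; these amount to the same thing.

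One statement in your write-up is false, though harmlessly so: the claim that the limit cases of the outer induction on $\zeta$ are handled by normality, i.e.\ that $g^{\xi+\zeta}$ and $g^\xi g^\zeta$ pass to limits in $\zeta$ by taking suprema. The functions $g^\xi$ are continuous in their ordinal \emph{argument}, but a weak hyperation is emphatically not continuous in the \emph{superscript}: for the hyperation of $\varphi:\alpha\mapsto\omega^\alpha$ one has $\varphi^\omega(1)=\varepsilon_1$ whereas $\lim_{n\to\omega}\varphi^n(1)=\varepsilon_0$, and Lemma \ref{stagnates} shows that iteration by pointwise limits stagnates -- this is precisely why hyperations are defined by a minimality condition rather than by limits, and why the values $g^{\omega^\delta}$ are not determined by the smaller iterates at all. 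Fortunately your proof never needs this step: since every ordinal, limit or not, has a finite Cantor normal form, your reduction to single powers plus the absorption argument via (d) covers all $\zeta$ by a finite induction on the number of CNF summands, so no transfinite induction on $\zeta$ (hence no limit case) ever arises. Delete that remark and the proof is complete.
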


\proof
Clearly the first item implies the second, particularly in view of Lemma \ref{prophyp}.

To see that the second implies the first, it suffices to show that $\vec g$ is right additive, that is, $ g^{\x + \zeta} = g^{\x} g^{\zeta}$ whenever $\xi+\zeta<\Lambda$.

We proceed by induction on ${\x}$. For the base case, note that
\[g^{0+\zeta}=g^\zeta=g^0g^\zeta\]
independently of $\zeta$, given that $g^0$ is the identity.\\

For the inductive step, suppose $\xi+\zeta<\Lambda$ and ${\x}>0$. The case for $\zeta=0$ is analogous to the case ${\x}=0$, so we assume $\zeta>0$.

Write ${\x}={\x}'+\omega^\alpha$ (so that ${\x}'<{\x}$) and $\zeta=\omega^\beta+\zeta'$, with $\zeta'<\zeta$. By induction on $\xi'<{\x}$, we assume that $g^{{\x}'+\vartheta}=g^{{\x}'}g^\vartheta$ whenver $\x'+\vartheta<\Lambda$. In particular,
\[
g^{\x}=g^{{\x}'+\omega^\alpha}=g^{{\x}'}g^{\omega^\alpha}.
\]

Now we consider two cases. First, assume $\alpha<\beta$. In this case, $\omega^\alpha+\omega^\beta=\omega^\beta$. Meanwhile, by Property (d), $g^{\omega^\alpha}g^{\omega^\beta}=g^{\omega^\beta}$ and 
we have that
\[g^{\x} g^{\zeta}=g^{{\x}'}g^{\omega^\alpha}g^{\omega^\beta}g^{\zeta' }=g^{{\x}'}g^{\omega^\beta}g^{\zeta' }=g^{{\x}'}g^\zeta\stackrel{\mathrm{IH}}=g^{{\x}'+\zeta}=g^{{\x}+\zeta}.\]

Now, if $\alpha\geq\beta$, we have that ${\zeta}<\omega^\alpha+{\zeta}$ and thus by using Property (c) we get that 
\[g^{\x} g^\zeta=g^{{\x}'}g^{\omega^\alpha}g^\zeta=g^{{\x}'} g^{\omega^\alpha+\zeta}\stackrel{\mathrm{IH}}=g^{{\x}'+\omega^\alpha+\zeta}=g^{{\x}+\zeta},\]
as desired.
\endproof

\begin{definition}[Hyperation]
A weak hyperation $\langle g^\xi\rangle_{\xi\in{\sf On}}$ of $f$ is {\em minimal} if it has the property that, whenever $\langle h^{\x}\rangle_{\xi\in{\sf On}}$ is a weak hyperation of $f$ and ${\x},\zeta$ are ordinals, then $g^{\x}\zeta\leq h^{\x}\zeta$.

If $f$ has a minimal weak hyperation, we call it {\em the hyperation} of $f$ and denote it ${\rm Hyp}[f]$.
\end{definition}

Note that hyperations, if they exist, are unique. It is also the case that they always exist; for this, we will establish an explicit recursion scheme to compute the unique hyperation of a normal function $f$. In particular, we see that the inequality we proved in Lemma \ref{prophyp}.3 is optimal as equality is attained in the limit for hyperations.

\begin{theorem}
Every normal function $f$ has a unique hyperation ${\rm Hyp}[f]=\langle f^\xi\rangle_{\xi\in{\sf On}}$ and it is given by the following recursion:
\begin{enumerate}
\item $f^0{\x}={\x}$,
\item $f^1=f$,
\item $f^{\omega^\rho+{\x}}=f^{\omega^\rho}f^{\x}$, where $0<{\x}<\omega^\rho+{\x}$,
\item $f^{\omega^\rho}0=\lim_{\zeta\to\omega^\rho}f^\zeta 0$ for $\rho >0$,
\item $f^{\omega^\rho}({\x}+1)=\lim_{\zeta\to\omega^\rho}f^{\zeta}(f^{\omega^\rho}({\x})+1)$ for $\rho >0$,
\item $f^{\omega^\rho}{\x}=\lim_{\zeta\to{\x}}f^{\omega^\rho}\zeta$ for ${\x}\in\mathsf{Lim}$ and $\rho >0$.
\end{enumerate}
\end{theorem}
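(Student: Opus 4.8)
The plan is to read the displayed recursion as a \emph{definition} of a family $\langle f^\xi\rangle_{\xi\in{\sf On}}$ and then verify three things: that it is well-defined and consists of normal functions, that it is a weak hyperation, and that it is pointwise minimal. Uniqueness is then automatic, since minimality pins down each value $g^\xi\zeta$ as the pointwise infimum over all weak hyperations, so at most one minimal weak hyperation can exist; the recursion then exhibits it.

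First I would check that the recursion is well-founded. Every $\eta>0$ is either a power $\omega^\rho$ or, by Cantor normal form, has leading term $\omega^\rho$ with $\eta=\omega^\rho+\xi$, $0<\xi$ and $\xi<\omega^\rho+\xi$; in the latter case Clause 3 defines $f^\eta$ from $f^{\omega^\rho}$ and $f^\xi$, both of strictly smaller exponent. For a power $\omega^\rho$ with $\rho>0$, Clauses 4--6 define the \emph{function} $f^{\omega^\rho}$ by an inner recursion on its argument, invoking only $f^\zeta$ with $\zeta<\omega^\rho$. Thus a single transfinite recursion on the exponent produces the family. Normality is proved by induction on the exponent: Clause 3 gives a composition of normal functions, hence normal; for $f^{\omega^\rho}$, continuity at limit arguments is Clause 6 by fiat, while strict monotonicity follows from $f^{\omega^\rho}(\xi+1)\geq f^{\omega^\rho}(\xi)+1$, which holds because each $f^\zeta$ is inflationary ($f^\zeta\alpha\geq\alpha$), so the supremum in Clause 5 already exceeds $f^{\omega^\rho}(\xi)+1$. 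One subtlety: for the suprema in Clauses 4--6 to be limits of \emph{nondecreasing} sequences one needs exponent-monotonicity, $\zeta<\zeta'\Rightarrow f^\zeta\alpha\leq f^{\zeta'}\alpha$, which in turn rests (exactly as in Lemma~\ref{prophyp}.1) on right-additivity at lower exponents. Consequently well-definedness, normality, monotonicity and additivity must be carried together in one simultaneous induction on the exponent.

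To see the family is a weak hyperation I would appeal to Lemma~\ref{char}: it suffices to verify (a)--(d) there. Properties (a), (b) are Clauses 1, 2, and (c) is Clause 3 (the case $\xi=0$ being trivial). The real work is property (d), $f^{\omega^\delta}=f^{\omega^\delta}f^{\omega^\rho}$ for $\delta<\rho$, i.e.\ every value of $f^{\omega^\rho}$ is a fixpoint of every lower $f^{\omega^\delta}$. I would prove this by induction on the argument. At argument $0$, using continuity of $f^{\omega^\delta}$ and Clause 4,
\[
f^{\omega^\delta}(f^{\omega^\rho}0)=\sup_{\zeta<\omega^\rho}f^{\omega^\delta}f^\zeta 0=\sup_{\zeta<\omega^\rho}f^{\omega^\delta+\zeta}0=\sup_{\eta<\omega^\rho}f^\eta 0=f^{\omega^\rho}0,
\]
where the second equality is right-additivity at lower exponents (available in the induction, since $\omega^\delta+\zeta<\omega^\rho$), and the third uses that $\{\omega^\delta+\zeta:\zeta<\omega^\rho\}$ is cofinal in $\omega^\rho$ (because $\delta<\rho$ yields $\omega^\delta+\omega^\rho=\omega^\rho$) together with the monotonicity of $\eta\mapsto f^\eta 0$. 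The successor and limit arguments go the same way, pushing $f^{\omega^\delta}$ through the supremum in Clause 5, respectively the limit in Clause 6. This absorption-plus-cofinality identity is, I expect, the main obstacle, both because it is the one genuinely new equation and because it is entangled with the additivity being simultaneously established; Lemma~\ref{char} is precisely what lets these local checks suffice.

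Finally, minimality. Given an arbitrary weak hyperation $\langle h^\xi\rangle$ of $f$, I would show $f^\xi\zeta\leq h^\xi\zeta$ by induction on the exponent, with an inner induction on the argument at the powers $\omega^\rho$. The decisive step is the successor clause: by the inductive hypotheses $f^\zeta\leq h^\zeta$ and $f^{\omega^\rho}\xi\leq h^{\omega^\rho}\xi$, together with monotonicity of $h^\zeta$, we get for each $\zeta<\omega^\rho$
\[
f^\zeta\big(f^{\omega^\rho}\xi+1\big)\leq h^\zeta\big(h^{\omega^\rho}\xi+1\big)\leq h^{\omega^\rho}(\xi+1),
\]
the last inequality being exactly Lemma~\ref{prophyp}.3; taking the supremum over $\zeta$ gives $f^{\omega^\rho}(\xi+1)\leq h^{\omega^\rho}(\xi+1)$, and the zero and limit cases are analogous. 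Hence $\langle f^\xi\rangle$ lies below every weak hyperation, so it is \emph{the} hyperation, establishing existence, uniqueness, and the claimed recursion at once. As a bonus this shows the bound in Lemma~\ref{prophyp}.3 is optimal, since for the minimal weak hyperation equality is attained by Clause 5.
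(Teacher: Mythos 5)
Your proposal is correct and takes essentially the same route as the paper's own proof: the paper likewise reads the recursion as a definition, checks normality (continuity from Clause 6, strict monotonicity from inflationarity of the lower $f^\zeta$), verifies weak-hyperationhood via Lemma~\ref{char} by reducing it to the absorption identity $f^{\omega^\delta}f^{\omega^\rho}=f^{\omega^\rho}$ proved by pushing $f^{\omega^\delta}$ through the limit and using cofinality of $\{\omega^\delta+\eta:\eta<\omega^\rho\}$ in $\omega^\rho$, and establishes minimality by a double induction on exponent and argument whose successor case rests exactly on Lemma~\ref{prophyp}.3. Your explicit observation that well-definedness, normality, exponent-monotonicity and additivity must be carried together in one simultaneous induction on the exponent is a point the paper glosses over, but the underlying argument is the same.
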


\proof
By an easy induction on $\alpha$ and an auxiliary induction on $\beta$ we readily see that $f^\alpha(\beta)$ is well-defined for all $\alpha$ and $\beta$. Thus, we must show  that
\begin{enumerate}
\item $\langle f^\xi\rangle_{\xi\in{\sf On}}$ is a weak hyperation and
\item if $\langle g^\xi\rangle_{\xi\in{\sf On}}$ is another weak hyperation, then $g^{\x}\zeta\geq f^{\x}\zeta$ for all ${\x},\zeta$.
\end{enumerate}

For the first point, we first need to show that $f^\x$ always normal, i.e., strictly increasing and continuous.

This is clear whenever $\x$ is of the form $\omega^\rho+\x'$ with $\omega^\rho+\x' > \x'>0$, since compositions of normal functions are always normal. Thus we may assume $\x=\omega^\rho$.

The continuity of $f^\x $ is evident from Clause 6. To check that $f^\x$ is increasing it suffices to verify that $f^\x\zeta<f^\x(\zeta+1)$ for all $\zeta$, but by induction, $f^{\eta}$ is normal for all $\eta<\x$, thus $f^{\eta}(f^{\omega^\rho}({\zeta})+1)\geq f^{\omega^\rho}{\zeta}+1$ and
\[
\lim_{\eta\to \omega^\rho}f^{\eta}(f^{\omega^\rho}({\zeta})+1)\geq f^{\omega^\rho}({\zeta})+1>f^{\omega^\rho}{(\zeta)}.
\]

Hence it remains to show that $\langle f^\xi\rangle_{\xi\in{\sf On}}$ is additive. To do this, we will prove by induction on $\Lambda$ that $\langle f^\xi\rangle_{\xi<\Lambda}$ is a weak hyperation.

In view of Lemma \ref{char}, it suffices to check that $f^{\omega^\delta}f^{\omega^\rho} \xi=f^{\omega^\rho}\xi$ whenever $\xi\in{\sf On}$ and $\omega^\delta<\omega^\rho<\Lambda$. By induction we may assume that $\langle f^\eta\rangle_{\eta<\omega^\rho}$ is a weak hyperation.

We then proceed case by case depending on whether $\xi$ is zero, a successor or a limit ordinal. Here we shall only consider the case ${\x}=\zeta+1$, which is the most involved:
\[
\begin{array}{lcl}
f^{\omega^\delta}f^{\omega^\rho}{\x} &=&f^{\omega^\delta}\displaystyle\lim_{\eta\to\omega^\rho}f^{\eta}(f^{\omega^\rho}(\zeta)+1)\\
&=&\displaystyle\lim_{\eta\to\omega^\rho}f^{\omega^\delta}f^{\eta}(f^{\omega^\rho}(\zeta)+1)\\
&\stackrel{\mathrm{IH}}=&\displaystyle\lim_{\eta\to\omega^\rho}f^{\omega^\delta+\eta}(f^{\omega^\rho}(\zeta)+1)\\
&=&\displaystyle\lim_{\eta\to\omega^\rho}f^{\eta}(f^{\omega^\rho}(\zeta)+1)\\
&=&f^{\omega^\rho}{\x}.
\end{array}
\]

Now we must check that if $\vec g$ is a weak hyperation of $f$, $f^{\x}\zeta\leq g^{\x}\zeta$ for all ${\x},\zeta$. We can assume ${\x}>1$, for otherwise there is nothing to prove.

We then establish the claim by induction on $\xi$ with a subsidiary induction on $\zeta$, assuming it for ${\x}',\zeta',$ where either ($i$) ${\x}'<{\x}$ or ($ii$) ${\x}'={\x}$ and $\zeta'<\zeta$.

As before, this proceeds case-by-case, where each argument is similar.
For example, assume $\zeta=0$ and ${\x}\in\mathsf{Lim}$. We then have that $ f^\eta 0\leq g^{\eta}0$ for $\eta<{\x}$; because $ g^\eta 0\leq g^{\x}0$ (Lemma \ref{prophyp}), it follows that
\[f^{\x} 0=\lim_{\eta\to{\x}}f^\eta 0\leq \lim_{\eta\to{\x}}g^\eta 0\leq g^{\x} 0.\]

Once again, the most involved case is where ${\x}=\omega^\rho$ and $\zeta=\alpha+1.$ Here we must observe that
\[
f^{\eta}(f^{\omega^\rho}(\alpha)+1)\leq g^{\eta}(g^{\omega^\rho}(\alpha)+1).
\]
This follows from our induction hypothesis since
\[
f^{\eta}(f^{\omega^\rho}(\alpha)+1)\leq f^{\eta}(g^{\omega^\rho}(\alpha)+1)\leq g^{\eta}(g^{\omega^\rho}(\alpha)+1),
\]
where the first inequality follows by induction on $\alpha<\zeta$ and the second by induction on $\eta<\xi$.

It follows that
\[f^{\omega^\rho}\zeta=\lim_{\eta\to\omega^\rho}f^{\eta}(f^{\omega^\rho}\alpha+1)\leq \lim_{\eta\to\omega^\rho}g^{\eta}(g^{\omega^\rho}\alpha+1)\leq g^{\omega^\rho}\zeta,\]
as claimed.\endproof

Let us include an easy example to see how the recursion works.
\begin{example}\label{Example:eOmega2}
 Recall that we defined $\varphi(\alpha)=\omega^\alpha$, and that the fixpoints of $\varphi$  are the {\em epsilon numbers} $\langle \varepsilon_\xi\rangle_{\xi\in{\sf On}}$.

Let $\langle\varphi^\xi\rangle_{\xi\in{\sf On}}={\rm Hyp}[\varphi]$.

Then, we have that $\varphi^\omega 1 = \varepsilon_1$, that is, $\varphi^\omega 1$ equals the second-smallest fixpoint of the map $\alpha\mapsto \omega^\alpha$. 
\end{example}

\begin{proof}
We first compute
\[\varphi^\omega 0= \lim_{n\to \omega} \varphi^n (0) = \lim_{n\to \omega} \varphi^n (0).\]
By Lemma \ref{fixpoints}, this limit defines the first fixpoint of $\alpha\mapsto \omega^\alpha$, normally denoted $\varepsilon_0$. Likewise, we compute
\[\varphi^\omega 1= \lim_{n\to \omega} \varphi^n (\varphi^\omega(0) + 1) =\lim_{n\to \omega} \varphi^n (\varepsilon_0 + 1).\]
Again by Lemma \ref{fixpoints}, this is the smallest fixpoint of $\varphi$ which is larger than $\varepsilon_0$, whence it is the next epsilon number, i.e. $\varepsilon_1$, as was to be shown.
\end{proof}

This is a special case of the more general equality \[\varphi^{\omega}\xi=\varphi_1\xi=\varepsilon_\xi;\]
we will return to this in the next section. It should also be noted that hyperations of hyperations behave as one would expect:

\begin{lemma}
If $f$ is any normal function and $\xi,\zeta$ are ordinals then $(f^\xi)^\zeta=f^{\xi\cdot\zeta}$, where $\langle f^\eta\rangle_{\eta\in{\sf On}}={\rm Hyp}[f]$ and $\langle (f^\xi)^\eta\rangle_{\eta\in{\sf On}}={\rm Hyp}[f^\xi]$.
\end{lemma}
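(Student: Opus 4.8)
The plan is to exploit the uniqueness part of the preceding recursion theorem: since ${\rm Hyp}[f^\xi]$ is the \emph{unique} family of normal functions satisfying the displayed recursion (with $f^\xi$ in the role of the base function), it suffices to check that the family $G^\zeta := f^{\xi\cdot\zeta}$ satisfies that very recursion. Each $G^\zeta$ is normal, being a hyperate of $f$, so this is legitimate, and by uniqueness it will follow that $G^\zeta = (f^\xi)^\zeta$. The degenerate case $\xi=0$ is immediate, since then $G^\zeta={\sf id}=(f^0)^\zeta$ for all $\zeta$, so I assume $\xi>0$ throughout. The algebraic clauses are quick: clause~1 holds as $G^0=f^{\xi\cdot 0}={\sf id}$, clause~2 as $G^1=f^{\xi\cdot 1}=f^\xi$, and for clause~3 I use left-distributivity of ordinal multiplication, $\xi\cdot(\omega^\rho+\eta)=\xi\cdot\omega^\rho+\xi\cdot\eta$, together with the right-additivity of ${\rm Hyp}[f]$ (which holds since it is a weak hyperation): whenever $0<\eta<\omega^\rho+\eta$,
\[G^{\omega^\rho+\eta}=f^{\xi\cdot\omega^\rho+\xi\cdot\eta}=f^{\xi\cdot\omega^\rho}f^{\xi\cdot\eta}=G^{\omega^\rho}G^\eta.\]

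The substance lies in the three limit clauses~4--6, where the key observation is the arithmetical identity $\xi\cdot\omega^\rho=\omega^{L\xi+\rho}$ for $\rho>0$. This shows that $G^{\omega^\rho}=f^{\xi\cdot\omega^\rho}=f^{\omega^{L\xi+\rho}}$ is again a hyperate of $f$ of exactly the shape to which the limit clauses for ${\rm Hyp}[f]$ apply, noting $L\xi+\rho\geq\rho>0$. Clause~6 is then immediate, since $G^{\omega^\rho}$ is normal hence continuous. For clause~5 I would apply the corresponding clause for ${\rm Hyp}[f]$ at exponent $\omega^{L\xi+\rho}$; writing $\beta_0:=G^{\omega^\rho}(\alpha)+1=f^{\omega^{L\xi+\rho}}(\alpha)+1$ this gives
\[G^{\omega^\rho}(\alpha+1)=f^{\omega^{L\xi+\rho}}(\alpha+1)=\lim_{\theta\to\omega^{L\xi+\rho}}f^{\theta}\beta_0,\]
which I then match against the value demanded by $G$'s own clause~5, namely $\lim_{\eta\to\omega^\rho}G^\eta\beta_0=\sup_{\eta<\omega^\rho}f^{\xi\cdot\eta}\beta_0$.

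The main obstacle --- really the only non-routine point --- is this matching of the two suprema. It rests on two facts: that $\theta\mapsto f^\theta\beta_0$ is monotone in the superscript (Lemma~\ref{prophyp}.1), and that $\{\xi\cdot\eta:\eta<\omega^\rho\}$ is cofinal in $\xi\cdot\omega^\rho=\omega^{L\xi+\rho}$, which follows from the continuity of left-multiplication by $\xi$ (normality requires $\xi>0$, as assumed). Granting these, every $f^\theta\beta_0$ with $\theta<\omega^{L\xi+\rho}$ is bounded above by some $f^{\xi\cdot\eta}\beta_0$, and conversely each $f^{\xi\cdot\eta}\beta_0$ occurs among the $f^\theta\beta_0$, so the two suprema coincide; the identical cofinality-plus-monotonicity argument settles clause~4, with $\beta_0$ replaced by $0$. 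Once all six clauses are verified, uniqueness of the hyperation forces $G^\zeta=(f^\xi)^\zeta$, that is, $f^{\xi\cdot\zeta}=(f^\xi)^\zeta$, as claimed.
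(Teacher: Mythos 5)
Your proof is correct, but its architecture differs from the paper's. The paper proves $(f^\xi)^\zeta\alpha=f^{\xi\cdot\zeta}\alpha$ by an explicit transfinite induction on $\zeta$ with an auxiliary induction on $\alpha$: in the indecomposable case $\zeta=\omega^\gamma$ it unfolds the recursion defining ${\rm Hyp}[f^\xi]$, applies the induction hypothesis inside the limits, and then replaces $\lim_{\eta\to\omega^\gamma}f^{\xi\cdot\eta}$ by $\lim_{\nu\to\omega^\delta}f^{\nu}$ where $\xi\cdot\omega^\gamma=\omega^\delta$, justifying this only by the remark that multiplication is continuous in its right argument. You instead verify that the family $G^\zeta:=f^{\xi\cdot\zeta}$ satisfies all six clauses of the recursion with base function $f^\xi$ and conclude by uniqueness of the solution to that recursion. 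The computational core is the same in both proofs: left distributivity plus right additivity of ${\rm Hyp}[f]$ for the decomposable case, and for the clauses at $\omega^\rho$ the fact that $\xi\cdot\omega^\rho$ is again of the form $\omega^{(\cdot)}$ (you make this sharper than the paper by identifying it as $\omega^{L\xi+\rho}$) together with the cofinality-plus-monotonicity matching of the two suprema, which you spell out and the paper compresses. What your packaging buys is that the double induction disappears from view --- it is absorbed into the determinism of the recursion --- and the ordinal-arithmetic facts are cleanly isolated; what it relies on is that any family satisfying the recursion clauses coincides with ${\rm Hyp}[f^\xi]$, which is legitimate here since the paper's theorem states that the hyperation is given by that recursion and its proof notes that the recursion determines $f^\alpha(\beta)$ for all $\alpha,\beta$. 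One could quibble that the theorem's stated ``uniqueness'' is about minimal weak hyperations rather than about solutions of the recursion, so your appeal tacitly includes the (easy) inductive argument that the recursion has at most one solution; this is where the induction the paper performs explicitly is hiding in your version.
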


\proof
We show that $(f^\xi)^\zeta\alpha=f^{\xi\cdot\zeta}\alpha$ by induction on $\zeta$ with an auxiliary induction on $\alpha$.

For $\zeta = 0$ there is nothing to prove and, if $\zeta=\gamma+\delta$ with $\delta < \zeta$, then
\[
(f^\xi)^{\gamma+\delta}=(f^\xi)^\gamma(f^\xi)^\delta\stackrel{\mathrm{IH}}=f^{\xi\cdot\gamma}f^{\xi\cdot\delta}=f^{\xi\cdot\gamma+\xi\cdot\delta}=f^{\xi\cdot(\gamma+\delta)}.
\]

Otherwise, $\zeta=\omega^\gamma$ for some $\gamma$. We note that in this case, $\xi\cdot\omega^\gamma=\omega^\delta$ for some $\delta$.

Then we must consider three cases, depending on whether $\alpha$ is zero, a successor or a limit ordinal.

In case $\alpha =0$ we see that 
\[
(f^\xi)^\zeta 0 = \lim_{\eta\to \zeta} (f^\xi)^\eta 0 
%=_{\sf IH} 
\stackrel{\mathrm{IH}}= \lim_{\eta\to \zeta} f^{\xi \cdot \eta} 0 = \lim_{\nu \to \omega^\delta} f^{\nu} 0 = f^{\omega^\delta} 0.
\]
Note that $\omega^\delta= \lim_{\eta\to\omega^\gamma}\xi\cdot\eta$
since multiplication is continuous on the right-hand argument.

In the successor case,
we write $\alpha=\beta+1$. Then,
\[
(f^\xi)^{\omega^\gamma}\alpha=\lim_{\eta\to\omega^\gamma}(f^\xi)^\eta((f^\xi)^{\omega^\gamma}(\beta)+1)\stackrel{\mathrm{IH}}=\lim_{\eta\to\omega^\delta}f^\eta(f^{\omega^\delta}(\beta)+1)=f^{\omega^\delta}\alpha,
\]
where in the inductive step we are simultaneously using $(f^\xi)^{\omega^\gamma}\beta=f^{\omega^\delta}\beta$ by induction on $\beta<\alpha$ and $(f^\xi)^\eta=f^{\xi\cdot\eta}$ by induction on $\eta<\zeta=\omega^\gamma$. 

The limit case follows from
\[
(f^\xi)^{\omega^\gamma}\alpha=\lim_{\beta\to\alpha}(f^\xi)^{\omega^\gamma}\beta\stackrel{\mathrm{IH}}=\lim_{\beta\to\alpha}f^{\omega^\delta}\beta=f^{\omega^\delta}\alpha .
\]
\endproof
%By this lemma we see that in general we do not have $(f^\alpha)^\beta = f^{(\alpha^\beta)}$. If we write $f^{\alpha^\beta}$, this shall by default denote $f^{(\alpha^\beta)}$.

\subsection{Hyperations and Veblen progressions}\label{section:HyperationsAndVeblenProgressions}

We shall now see that Veblen progressions can be naturally embedded in our hyperations. The steps in the Veblen progressions correspond to specific iterates; as such, hyperations form a natural refinement of Veblen progressions.

Troughout this subsection, given a normal function $f$, ${\rm Veb}[f]=\langle f_{\alpha}\rangle_{\alpha\in{\sf On}}$ and ${\rm Hyp}[f]=\langle f^{\alpha}\rangle_{\alpha\in{\sf On}}$.
\begin{lemma}\label{theorem:HyperExponentialsAndVeblen}
Given a normal function $f$ and an ordinal $\alpha$, we have that $f^{\omega^\alpha} =f_\alpha$.
\end{lemma}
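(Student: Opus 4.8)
The plan is to prove the stronger pointwise statement $f^{\omega^\alpha}\xi = f_\alpha\xi$ for all $\xi$ by transfinite induction on $\alpha$, with a subsidiary induction on the argument $\xi$. The base case $\alpha=0$ is immediate, since $\omega^0=1$ gives $f^{\omega^0}=f^1=f=f_0$. For $\alpha>0$ the ordinal $\omega^\alpha$ is a limit, so $f^{\omega^\alpha}$ is governed by Clauses 4--6 of the recursion characterizing ${\rm Hyp}[f]$, and I will match these three clauses against the corresponding clauses of the Veblen recursion in Theorem \ref{vebrec}.

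The crucial manoeuvre is to replace the limit $\lim_{\zeta\to\omega^\alpha}$ appearing in the hyperation recursion by a limit over a cofinal set of exponents whose hyperates are already understood. By Lemma \ref{prophyp}.1 the sequence $\langle f^\zeta\gamma\rangle_{\zeta<\omega^\alpha}$ is nondecreasing in $\zeta$, so its supremum is unchanged when $\zeta$ is restricted to any cofinal subset. When $\alpha=\beta+1$, the set $\{\omega^\beta\cdot n : n<\omega\}$ is cofinal in $\omega^\alpha$, and right additivity gives $f^{\omega^\beta\cdot n}=(f^{\omega^\beta})^n$; by the main induction hypothesis $f^{\omega^\beta}=f_\beta$, so these iterates are exactly the $(f_\beta)^n$ occurring in Clauses 1 and 2 of Theorem \ref{vebrec}. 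When $\alpha$ is a limit, the set $\{\omega^\beta:\beta<\alpha\}$ is cofinal in $\omega^\alpha$, and the main induction hypothesis identifies $f^{\omega^\beta}$ with $f_\beta$, matching Clauses 3 and 4.

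Concretely, for the argument $0$ I compute $f^{\omega^\alpha}0=\sup_{\zeta<\omega^\alpha}f^\zeta 0$ and read off either $\lim_{n\to\omega}(f_\beta)^n 0=f_{\beta+1}0$ or $\lim_{\beta\to\alpha}f_\beta 0=f_\alpha 0$ according to the form of $\alpha$; for a successor argument $\xi+1$ I apply Clause 5, substitute the subsidiary induction hypothesis $f^{\omega^\alpha}\xi=f_\alpha\xi$ into the inner term $f^{\omega^\alpha}(\xi)+1$, and again pass to the cofinal subsequence to land on $f_{\beta+1}(\xi+1)$ or $f_\alpha(\xi+1)$; and for a limit argument Clause 6 together with the subsidiary hypothesis gives $f^{\omega^\alpha}\xi=\lim_{\nu\to\xi}f_\alpha\nu=f_\alpha\xi$, matching Clause 5 of Theorem \ref{vebrec}. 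Throughout, the identification of $\lim_{n\to\omega}(f_\beta)^n$ with the least-fixpoint construction underlying the Veblen derivative is exactly Lemma \ref{fixpoints}.

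I expect the main obstacle to be organizational rather than conceptual: keeping the two nested inductions straight and checking, in each of the six resulting cases, that the chosen subsequence is genuinely cofinal in $\omega^\alpha$ and that the monotonicity of Lemma \ref{prophyp}.1 legitimately lets one evaluate the limit along it. The single substantive ingredient is this cofinal-replacement step; once it is in place, each case reduces to a line-by-line comparison of the hyperation and Veblen recursions.
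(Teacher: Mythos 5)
Your proposal is correct and follows essentially the same route as the paper: induction on $\alpha$ with a subsidiary induction on the argument, matching Clauses 4--6 of the hyperation recursion against Theorem \ref{vebrec} via the cofinal subsequences $\{\omega^\beta\cdot n\}_{n<\omega}$ (for $\alpha=\beta+1$) and $\{\omega^\beta\}_{\beta<\alpha}$ (for limit $\alpha$), together with right additivity $f^{\omega^\beta\cdot n}=(f^{\omega^\beta})^n$. If anything, you are slightly more explicit than the paper, which performs the cofinal-replacement step without comment and omits the limit cases that you spell out.
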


\proof
We prove the lemma by induction on $\alpha$.
By definition we have $f^{\omega^0}=f_0$ which settles the base case.

For $\alpha + 1$
we proceed to prove that  $f^{\omega^{\alpha+1}} \beta =f_{\alpha+1} \beta$ by a subsidiary induction on $\beta$  considering several cases.
For $\beta=0$ we see that
\[
f^{\omega^{\alpha+1}}0 = 
\lim_{n\to \omega} f^{\omega^\alpha \cdot n}0 = 
\lim_{n\to \omega} (f^{\omega^\alpha})^n 0 \stackrel{\mathrm{IH}}=
\lim_{n\to \omega} (f_\alpha)^n 0.
\]
But by Theorem \ref{vebrec}, $\lim_{n\to \omega} (f_\alpha)^n 0 = f_{\alpha+1}0$.

Likewise, for $\beta +1$ we see that
\begin{align*}
f^{\omega^{\alpha+1}}(\beta+1) &=
\lim_{n\to \omega}(f^{\omega^\alpha})^n(f^{\omega^{\alpha+1}}(\beta)+1)\\ &\stackrel{\mathrm{IH}}= \lim_{n\to \omega} (f_\alpha)^n (f_{\alpha+1}(\beta) +1)\\
&=f_{\alpha+1}(\beta +1).
\end{align*}

The cases for limit $\beta$ or limit $\alpha$ are similar and we omit them here.
\endproof

\begin{corollary}\label{theorem:hyperationsAndIteratedVeblen}
If
\[{\alpha}=\omega^{\alpha_1}+\hdots+\omega^{\alpha_n},\]
then
\[f^{\alpha}\zeta=f_{\alpha_1}\hdots f_{\alpha_n}(\zeta)\]
for any ordinal $\zeta$.
\end{corollary}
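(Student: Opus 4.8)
The plan is to derive the statement directly from Lemma \ref{theorem:HyperExponentialsAndVeblen} together with the right additivity of hyperations, by induction on the number $n$ of terms in the Cantor normal form of $\alpha$. Recall that ${\rm Hyp}[f]$ is in particular a weak hyperation, so it satisfies $f^{\xi+\zeta}=f^\xi f^\zeta$; this is the only structural fact we will need beyond Lemma \ref{theorem:HyperExponentialsAndVeblen}.

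First I would dispose of the base cases. When $n=0$ we have $\alpha=0$, so $f^\alpha=f^0={\sf id}$ and the right-hand side is the empty composition, which is again the identity; and when $n=1$ the claim is exactly $f^{\omega^{\alpha_1}}=f_{\alpha_1}$, which is Lemma \ref{theorem:HyperExponentialsAndVeblen}. For the inductive step I would peel off the leading term of the Cantor normal form. Writing $\alpha=\omega^{\alpha_1}+\alpha'$ with $\alpha'=\omega^{\alpha_2}+\hdots+\omega^{\alpha_n}$, right additivity gives
\[f^\alpha=f^{\omega^{\alpha_1}+\alpha'}=f^{\omega^{\alpha_1}}f^{\alpha'}.\]
Lemma \ref{theorem:HyperExponentialsAndVeblen} rewrites the first factor as $f_{\alpha_1}$, while the induction hypothesis applied to $\alpha'$, which has one fewer Cantor term, yields $f^{\alpha'}=f_{\alpha_2}\hdots f_{\alpha_n}$. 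Composing these and evaluating at an arbitrary ordinal $\zeta$ produces $f^\alpha\zeta=f_{\alpha_1}\hdots f_{\alpha_n}(\zeta)$, as desired.

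As for the main obstacle: there is essentially none of substance. All of the real work is carried by Lemma \ref{theorem:HyperExponentialsAndVeblen}, and the corollary is just the observation that right additivity converts the additive decomposition of $\alpha$ into a composition of the corresponding Veblen functions. The only point requiring a moment's care is that $\omega^{\alpha_1}$ is a genuine initial summand of $\alpha$ with $\omega^{\alpha_1}+\alpha'=\alpha$, so that the defining equation of a weak hyperation applies verbatim at each step; since we are given the Cantor normal form, this is immediate.
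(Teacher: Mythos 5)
Your proof is correct and follows essentially the same route as the paper: the paper's own argument is precisely ``directly from Lemma \ref{theorem:HyperExponentialsAndVeblen} and right additivity $f^{\xi+\zeta}=f^\xi f^\zeta$,'' which is what you carry out, merely made explicit as an induction on the number of Cantor normal form terms. Your closing remark about needing $\omega^{\alpha_1}+\alpha'=\alpha$ to be a genuine decomposition is harmless but not even necessary, since for the full hyperation ($\Lambda={\sf On}$) right additivity holds for \emph{all} pairs of ordinals without restriction.
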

\begin{proof}
Directly from Theorem \ref{theorem:HyperExponentialsAndVeblen} and the fact that we have $f^{\xi + \zeta} = f^\xi f^\zeta$.
\end{proof}

We thus see that hyperations define Veblen progressions. As such hyperations are natural refinements of them and a Veblen progression ${\rm Veb}[f]$ comes with a corresponding refinement ${\rm Hyp}[f]$. We shall now see that Veblen progressions in their turn uniquely determine weak hyperations.

\begin{theorem}\label{theorem:VeblenDefinesHyperation}
Let $\vec g=\langle g^{\xi}\rangle_{\xi\in{\sf On}}$ be a weak hyperation of a normal function $f$. If we moreover have that $g^{\omega^\alpha} =f_\alpha$ for each $\alpha$ then $\vec g={\rm Hyp}[f]$.
\end{theorem}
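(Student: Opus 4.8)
The plan is to prove $g^\xi = f^\xi$ for every ordinal $\xi$ by reducing the value of $g^\xi$ to the already-established description of the hyperates in terms of the Veblen progression. The two tools I would use are the Cantor normal form of $\xi$ and the right additivity that every weak hyperation enjoys over the full class of ordinals.

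First I would fix an ordinal $\xi$ and write it in Cantor normal form $\xi = \omega^{\alpha_1} + \cdots + \omega^{\alpha_n}$ with $\alpha_1 \geq \cdots \geq \alpha_n$. Since $\vec g$ is a weak hyperation indexed by all of $\mathsf{On}$, Clause 3 of the definition of weak hyperation gives right additivity $g^{\sigma + \tau} = g^\sigma g^\tau$ for arbitrary $\sigma,\tau$. Applying this repeatedly along the normal form decomposition yields
\[
g^\xi = g^{\omega^{\alpha_1}} g^{\omega^{\alpha_2}} \cdots g^{\omega^{\alpha_n}}.
\]
Next I would invoke the hypothesis $g^{\omega^\alpha} = f_\alpha$ to rewrite each factor, obtaining $g^\xi = f_{\alpha_1} f_{\alpha_2} \cdots f_{\alpha_n}$. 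Finally, by Corollary \ref{theorem:hyperationsAndIteratedVeblen} applied to ${\rm Hyp}[f] = \langle f^\eta \rangle_{\eta \in \mathsf{On}}$, this very composition $f_{\alpha_1} \cdots f_{\alpha_n}$ equals $f^\xi$. Hence $g^\xi = f^\xi$, and since $\xi$ was arbitrary we conclude $\vec g = {\rm Hyp}[f]$.

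I do not expect a genuine obstacle here; the argument is essentially bookkeeping with the Cantor normal form, and in effect it is the converse of Corollary \ref{theorem:hyperationsAndIteratedVeblen}: the values $g^{\omega^\alpha}$ determine the entire weak hyperation through additivity, so pinning them to the Veblen progression forces $\vec g$ to be the minimal one. The only points warranting a word of care are the degenerate case $\xi = 0$ (empty normal form, $n = 0$), handled directly by $g^0 = {\sf id} = f^0$, and the remark that right additivity is genuinely available for all ordinals precisely because here $\Lambda = \mathsf{On}$, so no side condition on $\xi+\zeta < \Lambda$ ever obstructs the decomposition.
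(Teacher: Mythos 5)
Your proof is correct and takes essentially the same route as the paper's: write $\xi$ in Cantor normal form, use right additivity to factor $g^\xi$ into the $g^{\omega^{\alpha_i}}$, replace each factor by $f_{\alpha_i}$ using the hypothesis, and conclude via Corollary \ref{theorem:hyperationsAndIteratedVeblen}. The only (harmless) additions are your explicit treatment of the case $\xi=0$ and the remark that $\Lambda=\mathsf{On}$ makes additivity unconditionally available, both of which the paper leaves implicit.
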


\begin{proof}
%We prove the identities in the defining recursion of the unique hyperation.
Write $\xi = \omega^{\xi_1} + \ldots + \omega^{\xi_n}$ in CNF. By additivity we see that 
\[
g^{\xi} = g^{\omega^{\xi_1}} \ldots g^{\omega^{\xi_n}},
\]
which by assumption equals $f_{\xi_1}\ldots f_{\xi_n}$. By Corollary \ref{theorem:hyperationsAndIteratedVeblen}, it follows that $g^\xi=f^\xi$, as claimed.
\end{proof}

%%%%%%%%%%%%%%%%%%%%%%%%%%%%%%%%%%%%%%%%%%%%%%%%%%%%
\section{Initial functions and left adjoints}\label{ila}

Normal functions are typically not surjective and hence non-invertible. However, being injective, they are always left-invertible. Further, if $f$ is normal and $gf$ is the identity, $g$ is usually not normal. {Initial functions}, meanwhile, provide natural left-inverses for normal functions, in particular when they are also {\em left adjoints}.

We will say a function $g$ is {\em initial} if, whenever $I$ is an initial segment (i.e., of the form $[0,\beta)$ for some $\beta$), then $f(I)$ is an initial segment.

\begin{lemm}
If $f$ is initial, then $f{\x}\leq{\x}$ for every ordinal ${\x}$.
\end{lemm}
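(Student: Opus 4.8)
The plan is to prove $f\xi \leq \xi$ by transfinite induction on $\xi$, running a contradiction argument at each stage. The only hypothesis available is the defining property of initial functions — that the image of any initial segment $[0,\beta)$ is again an initial segment — so the proof must combine this with the induction hypothesis rather than with any monotonicity or continuity, which initial functions need not enjoy.

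For the inductive step I would assume $f\alpha \leq \alpha$ for all $\alpha < \xi$ and suppose, toward a contradiction, that $f\xi > \xi$. The key move is to apply initiality to the segment $I = [0,\xi+1) = \{\alpha : \alpha \leq \xi\}$, so that $f(I)$ is an initial segment, say $[0,\gamma)$. Since $\xi \in I$ we have $f\xi \in f(I)$, and as $f\xi > \xi$ this forces $\xi < \gamma$, hence $\xi \in f(I)$ as well. Thus $\xi = f\alpha$ for some $\alpha \leq \xi$. The induction hypothesis rules out $\alpha < \xi$ (which would give $f\alpha \leq \alpha < \xi$), while $\alpha = \xi$ would give $f\xi = \xi$, contradicting $f\xi > \xi$. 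Either way we reach a contradiction, so $f\xi \leq \xi$. The base case $\xi = 0$ is immediate: $f(\{0\})$ must be a singleton initial segment, namely $\{0\}$, so $f0 = 0$.

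The main subtlety — and the reason a more direct argument does not work — is precisely that initial functions are not assumed monotone, so one cannot argue that large inputs force large outputs or lean on normality as elsewhere in the paper. The real content is the observation that initiality forces the image $f(I)$ to be ``gap-free'': if $f\xi$ overshoots $\xi$, then every ordinal below $f\xi$, and in particular $\xi$ itself, must already be realized as a value $f\alpha$ with $\alpha \leq \xi$, which the induction hypothesis forbids. I expect that once the right segment $I = [0,\xi+1)$ is selected, the remaining case split is short and routine; choosing and exploiting that segment is the only genuine step.
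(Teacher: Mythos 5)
Your proof is correct and takes essentially the same route as the paper's: both are inductions on $\xi$ in which initiality is applied to the segment $[0,\xi]$ and the induction hypothesis controls the images of the ordinals below $\xi$. The only difference is presentational --- the paper argues directly, first noting that $f[0,\xi)=[0,\beta)$ with $\beta\leq\xi$ and then reading off $f(\xi)\leq\beta$ from the requirement that $[0,\beta)\cup\{f(\xi)\}$ be an initial segment, whereas you package the same gap-freeness observation as a proof by contradiction.
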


\proof
By a simple induction on $\xi$. If $f$ is initial we have that
\[f[0,\xi)=[0,\beta)\]
for some $\beta$; by induction on $\xi$, if $\zeta<\xi$ it follows that $f(\zeta)\leq\zeta$, and thus $\beta\leq \sup_{\zeta<\xi}(\zeta+1)=\xi$.

Further,
\[f[0,\xi]=f[0,\xi)\cup\cbra f(\xi)\cket=[0,\beta)\cup\cbra f(\xi)\cket\]
must also be an initial segment, from which it follows that $f(\xi)\leq\beta\leq \xi$.
\endproof

Let $f$ be a normal function. Then, $g$ is a {\em left adjoint} for $f$ if, for all ordinals $\alpha,\beta$,
\begin{equation}\label{laone}
\alpha= f(\beta)\Rightarrow g(\alpha)=\beta
\end{equation}
and
\begin{equation}\label{latwo}
\alpha< f(\beta)\Rightarrow g(\alpha)<\beta.
\end{equation}\\\\

Not all normal functions have left adjoints:

\begin{lemm}\label{iszero}
If a normal function $f$ has a left adjoint, then $f(0)=0$.
\end{lemm}

\proof
This follows from the fact that, if $\alpha<f(0)$ and $g$ is a left adjoint to $f$, then $g(\alpha)<0$, which is absurd; thus there can be no such $\alpha$, that is, $f(0)=0$.
\endproof

However, this is the only condition we need to have left adjoints of normal functions, which are always initial:

\begin{lemm}\label{isanadj}
Let $f$ be a normal function with $f(0)=0$.

Then, $f$ has an initial left adjoint $g$. Further, if $h$ is any left adjoint for $f$, then $h$ is initial.
\end{lemm}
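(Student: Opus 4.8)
The plan is to prove both halves of Lemma \ref{isanadj} by exhibiting a concrete candidate for the left adjoint and verifying the defining conditions \eqref{laone} and \eqref{latwo} directly from the normality of $f$. For existence, I would define $g$ by the natural ``inverse-like'' formula
\[
g(\alpha) = \sup\{\beta \mid f(\beta) \leq \alpha\},
\]
which is well-defined because $f(0)=0$ guarantees the set is nonempty (it contains $0$), and it is bounded because $f$ is strictly increasing, hence unbounded, so some $f(\beta) > \alpha$. The key observation is that since $f$ is strictly increasing, the set $\{\beta \mid f(\beta) \leq \alpha\}$ is an initial segment of the ordinals, so its supremum behaves predictably.

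First I would verify the two adjointness conditions for this $g$. For \eqref{laone}, suppose $\alpha = f(\beta)$; then every $\gamma$ with $f(\gamma) \leq \alpha = f(\beta)$ satisfies $\gamma \leq \beta$ by strict monotonicity, while $\beta$ itself is in the set, so the supremum is exactly $\beta$. Here I must be slightly careful: the supremum of an initial segment $[0,\beta]$ is $\beta$, and because $f(\beta)\leq\alpha$ strictly bounds larger arguments out, $\beta$ is attained rather than merely approached. For \eqref{latwo}, suppose $\alpha < f(\beta)$; then any $\gamma$ with $f(\gamma)\leq\alpha<f(\beta)$ has $\gamma<\beta$, so the set is contained in $[0,\beta)$ and hence $g(\alpha) = \sup\{\gamma \mid f(\gamma)\leq\alpha\} \leq \beta$. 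The potential subtlety is ruling out $g(\alpha)=\beta$: if the supremum equalled $\beta$ with $\beta$ a limit, I would use continuity of $f$ to derive $f(\beta)=\lim_{\gamma\to\beta}f(\gamma)\leq\alpha$, contradicting $\alpha<f(\beta)$; if $\beta$ is a successor the argument is immediate. This continuity step is where the full strength of normality (not just monotonicity) is used.

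Next I would confirm that this $g$ is initial. Since $g$ is monotone (nondecreasing, as the defining set grows with $\alpha$) and $g(0)=0$ (because $f(0)=0$ forces $0$ into the set and nothing above can satisfy $f(\gamma)\le 0$ unless $\gamma=0$), the image $g[0,\eta)$ is a bounded set of ordinals closed downward under the values taken; I would argue directly that $g$ maps initial segments to initial segments by checking that if $\alpha'<\alpha$ then every ordinal below $g(\alpha)$ is of the form $g(\alpha'')$ for some $\alpha''\le\alpha$, or more cleanly by showing $g$ is surjective onto an initial segment because $g(f(\beta))=\beta$ for all $\beta$ by \eqref{laone}, so every $\beta$ is in the range of $g$. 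Surjectivity plus monotonicity immediately yields that $g$ maps initial segments to initial segments.

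For the second assertion I would show uniqueness-up-to-behaviour: any left adjoint $h$ satisfying \eqref{laone} and \eqref{latwo} must be initial. The cleanest route is to prove that conditions \eqref{laone} and \eqref{latwo} pin down $h$ completely, forcing $h=g$, after which initiality follows from the previous paragraph. Concretely, given $\alpha$, there is a unique $\beta$ with $f(\beta)\leq\alpha<f(\beta+1)$ (using that $f$ is normal, hence its values partition the ordinals into the intervals $[f(\beta),f(\beta+1))$ together with limit values); then \eqref{latwo} applied at $\beta+1$ gives $h(\alpha)\leq\beta$, and a short argument using \eqref{laone} at $f(\beta)\leq\alpha$ combined with monotonicity of $h$ gives $h(\alpha)\geq\beta$, so $h(\alpha)=\beta=g(\alpha)$. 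The main obstacle I anticipate is the bookkeeping around limit ordinals in this interval decomposition—ensuring every $\alpha$ lands in a well-defined interval even when $\alpha$ is a limit of values of $f$—but this is handled precisely by the continuity clause of normality, so the work is in stating it carefully rather than in any deep difficulty.
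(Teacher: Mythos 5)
Your existence proof is correct, and it takes a genuinely different route from the paper's: you define $g(\alpha)=\sup\{\beta\mid f(\beta)\leq\alpha\}$ (in fact a maximum, by continuity of $f$), verify (\ref{laone}) and (\ref{latwo}) directly, and deduce initiality from surjectivity ($g(f(\beta))=\beta$) together with monotonicity of $g$. The paper instead takes the crude partial inverse --- $g(\beta)=\alpha$ if $\beta=f(\alpha)$, and $g(\beta)=0$ otherwise --- and obtains its initiality only as a special case of the second claim. Your construction buys a canonical, monotone adjoint and a self-contained existence argument; the paper's buys a one-line existence proof but must then do the real work in the second claim.

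That second claim is where your proposal breaks down. You argue that (\ref{laone}) and (\ref{latwo}) pin $h$ down completely, using ``monotonicity of $h$'' to get $h(\alpha)\geq\beta$ when $f(\beta)\leq\alpha<f(\beta+1)$. But a left adjoint is only assumed to satisfy (\ref{laone}) and (\ref{latwo}), and for $\alpha$ outside the range of $f$ these conditions impose only \emph{upper} bounds on $h(\alpha)$ (namely $h(\alpha)<\gamma$ whenever $\alpha<f(\gamma)$) and no lower bound whatsoever; monotonicity of $h$ is not available, and uniqueness is simply false. Concretely, take $f=e$, so that $f(1)=\omega$ and $f(2)=\omega^2$, and let $h$ be the paper's partial inverse: $h(\alpha)=\beta$ if $\alpha=f(\beta)$, and $h(\alpha)=0$ otherwise. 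This $h$ satisfies both adjointness conditions, yet $h(\omega+1)=0$ while your $g(\omega+1)=1$, and $h(\omega)=1>0=h(\omega+1)$, so $h$ is neither equal to your $g$ nor monotone. Hence the second assertion cannot be reduced to the initiality of your particular $g$. What is true --- and is how the paper argues --- is that although a left adjoint is not determined pointwise, its images of initial segments are: one proves by induction on $\beta$ that $h[0,\beta]=f^{-1}[0,\beta]$ for \emph{every} left adjoint $h$. In the inductive step, if $\beta$ lies in the range of $f$ then (\ref{laone}) fixes $h(\beta)$; if not, one takes the least $\gamma$ with $\beta<f(\gamma)$, notes $h(\beta)<\gamma$ by (\ref{latwo}), and concludes $f(h(\beta))<\beta$ by minimality of $\gamma$, so $h(\beta)$ already lies in $f^{-1}[0,\beta)$. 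This yields initiality of every left adjoint with no appeal to uniqueness or monotonicity, and your final paragraph needs to be replaced by an argument of this kind.
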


\proof
The existence of a left adjoint is easy; simply consider $g$ defined by $g(\beta)=\alpha$ if $\beta=f(\alpha)$, $g(\beta)=0$ if there is no such $\alpha$.

Now, if $h$ is a left adjoint for $f$, we claim that for all $\beta$, $h[0,\beta]=f^{-1}[0,\beta]$, which is clearly an initial segment since $f$ is increasing. This follows by induction on $\beta$.

First note that
\[h[0,\beta)=\bigcup_{\delta<\beta}h[0,\delta]\stackrel{\mathrm{IH}}=\bigcup_{\delta<\beta}f^{-1}[0,\delta]=f^{-1}[0,\beta).\]
Thus, $h[0,\beta]=f^{-1}[0,\beta)\cup \cbra h(\beta)\cket$.

Now, if $\beta$ lies in the range of $f$, $\beta=f(\gamma)$ implies that $f^{-1}\cbra\beta\cket=\cbra\gamma\cket$, while $h(\beta)=\gamma$ because $h$ is a left adjoint of $f$. It follows that $\cbra h(\beta)\cket=f^{-1}\cbra\beta\cket$, so that
\[h[0,\beta]=f^{-1}[0,\beta)\cup f^{-1}\cbra\beta\cket=f^{-1}[0,\beta].\] 

If $\beta$ does not lie in the range of $f$, let $\gamma$ be the least ordinal such that $\beta<f(\gamma)$. Then, $h(\beta)<\gamma$, so that by minimality of $\gamma$, $fh(\beta)<\beta$ and thus $h(\beta)\in f^{-1}[0,\beta)$; therefore, 
\[h[0,\beta]=h[0,\beta)\cup\cbra h(\beta)\cket=f^{-1}[0,\beta)=f^{-1}[0,\beta].\qedhere\]
\endproof

Thus initial functions are natural inverses to normal functions. Because of this, later in the text we shall turn our attention to iterations of initial functions. But first, let us introduce some important examples.

\section{Exponentials and logarithms}\label{exlo}

In this section we shall define some particularly natural and interesting ordinal functions from the point of view of our framework. After deriving some basic properties we see how to obtain an ordinal notation system from these new functions.

\subsection{Exponentials and logarithms}

By Lemma \ref{iszero}, we know that $\varphi$ (the $\omega$-based exponential) does not have any left adjoint. However, we can introduce a mild modification which does.

Set $e(\xi)=-1+\omega^\xi$. Note that $e(\xi)=\omega^\xi$ unless $\xi=0$, in which case $e(0)=0$. The advantage of $e$ over $\varphi$, as mentioned, is the existence of left adjoints. We mention two:

%We may use $e$ to give notation for ordinals much like $\varphi$, although we must ammend for the fact that $1$ does not lie on the range of $e$:
%
%\begin{theorem}[Exponential normal forms]
%Every ordinal $\xi$ can be written uniquely in the form\footnote{Here we should remark that summations should be applied from left to right, that is, $\sum_{i\leq I}\eta_i=\eta_0+\eta_1+\hdots+\eta_I$, and that we shall often omit parentheses when this does not lead to confusion, writing $f\xi$ instead of $f(\xi)$.}
%\[\xi=\sum_{i\leq I}\ex_{\alpha_i}\beta_i+n,\]
%where
%\begin{enumerate}
%\item for $i<I$, $e_{\alpha_i}\beta_i\geq e_{\alpha_{i+1}}\beta_{i+1}$,
%\item for $i\leq I$, $0<\beta_i<e_{\alpha_i}\beta_i$ and
%\item $n<\omega$.
%\end{enumerate}
%\end{theorem}
%
%\proof
%Immediate from Theorem \ref{vnft}.
%\endproof

\begin{enumerate}
\item The {\em left-logarithm}, {\em first exponent} or {\em greatest exponent} $L$, where $L(\xi)$ is the unique ordinal $\alpha$ so that $\xi=\omega^\alpha+\beta$ with $\beta<\omega^\alpha+\beta$. We set $L(0)=0$. We remark that, given $\xi,\zeta$,
\begin{enumerate}
\item $L(\xi+\zeta)=\max\cbra L(\xi),L(\zeta)\cket$ and
\item $L(\xi\cdot\zeta)=L(\xi)+L(\zeta)$.
\end{enumerate}
\item The {\em end-logarithm}, {\em last exponent} or {\em least exponent} $\le$ assigns to an ordinal $\xi$ the unique $\delta$ such that $\xi$ can be written in the form $\gamma+\omega^\delta$; we also set $\le 0=0$. Here we see that
\begin{enumerate}
\item $\le(\xi+\zeta)=\le\zeta$ and
\item $\le(\xi\cdot\zeta)=L\xi+\le\zeta$.
\end{enumerate} 
\end{enumerate}

Exponentials and logarithms shall provide us with key examples of functions to hyperate and cohyperate, respectively, and indeed are the authors' motivation for the present work, due to their applicability to provability logic \cite{glpmodels,WellOrders}.

It is convenient to relate the Veblen progression $\langle e_\xi\rangle_{\xi\in{\sf On}}$ with $\langle\varphi_\xi\rangle_{\xi\in{\sf On}}$:

\begin{lemm}\label{exptoveb}
Given ordinals $\alpha,\beta$, $\langle e_\xi\rangle_{\xi\in{\sf On}}={\rm Veb}[e]$ and $\langle \varphi_\xi\rangle_{\xi\in{\sf On}}={\rm Veb}[\varphi]$,
\begin{enumerate}
\item $e_\alpha(0)=0$,
\item $e_0(1+\beta)=\varphi_0(1+\beta)$,
\item $e_{1+\alpha}(1+\beta)=\varphi_{1+\alpha}(\beta)$.
\end{enumerate}
\end{lemm}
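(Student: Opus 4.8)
The plan is to reduce everything to a single statement about the sets enumerated by the two progressions. Write $E_\alpha$ for the image of $e_\alpha$ and $\Phi_\alpha$ for the image of $\varphi_\alpha$. Items 1 and 2 are then immediate. For item 1, $e_\alpha(0)=0$ because $0$ is always the least element of $E_\alpha$: indeed $e(0)=0$, so $0$ is a fixpoint of $e$ and hence belongs to every $E_\alpha$, while being least it is enumerated first. Item 2 is the direct computation $e_0(1+\beta)=-1+\omega^{1+\beta}=\omega^{1+\beta}=\varphi_0(1+\beta)$, using that $\omega^{1+\beta}\geq\omega$ is a limit ordinal, so the leading $-1$ is absorbed.

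The heart of the matter is the invariant
\[E_\mu=\{0\}\cup\Phi_\mu \quad\text{with}\quad 0\notin\Phi_\mu \quad\text{and}\quad \min\Phi_\mu\geq\omega,\]
which I would prove for all $\mu\geq 1$ by transfinite induction on $\mu$. For the base case $\mu=1$ I compute fixpoints directly: $\varphi(\delta)=\delta$ iff $\omega^\delta=\delta$, which forces $\delta\geq\varepsilon_0$, whereas $e(\delta)=\delta$ holds exactly when $\delta=0$ or $\omega^\delta=\delta$; hence $E_1=\{0\}\cup\Phi_1$, and $\Phi_1$ is the set of epsilon numbers, all $\geq\omega$. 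At a limit $\mu$ the claim is preserved because the $E_\xi$ and $\Phi_\xi$ are $\subseteq$-decreasing, so restricting the intersection to indices $\geq 1$ and distributing over the union with $\{0\}$ gives $E_\mu=\bigcap_{1\leq\xi<\mu}E_\xi=\{0\}\cup\bigcap_{1\leq\xi<\mu}\Phi_\xi=\{0\}\cup\Phi_\mu$.

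The crucial and only delicate step is the successor case, where I must pass from $E_\nu=\{0\}\cup\Phi_\nu$ to $E_{\nu+1}=\{0\}\cup\Phi_{\nu+1}$, i.e. compare the fixpoints of $e_\nu$ with those of $\varphi_\nu$. Here I would isolate the following general fact: if a normal $g$ enumerates $\{0\}\cup H$ and a normal $h$ enumerates $H$, where $0\notin H$ and $\min H\geq\omega$, then $g(0)=0$, $g(1+\beta)=h(\beta)$, and the fixpoints of $g$ are exactly $\{0\}$ together with the fixpoints of $h$. The content is that the index shift $\delta\mapsto 1+\delta$ creates no spurious small fixpoints: a nonzero fixpoint $\delta=1+\beta$ of $g$ satisfies $h(\beta)=1+\beta$, and since $h(\beta)\geq\min H\geq\omega$ this forces $\beta\geq\omega$, whence $1+\beta=\beta$ and $h(\beta)=\beta$ is a genuine fixpoint of $h$; conversely any fixpoint $\gamma$ of $h$ is $\geq\min H\geq\omega$, so $\gamma=1+\gamma$ and $g(\gamma)=h(\gamma)=\gamma$.

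Applying this general fact with $g=e_\nu$ and $h=\varphi_\nu$, the hypothesis $\min\Phi_\nu\geq\omega$ being supplied by the inductive invariant, yields the successor step; the condition $\min\Phi_{\nu+1}\geq\omega$ propagates since a fixpoint of $\varphi_\nu$ is $\geq\varphi_\nu(0)\geq\omega$. Reading the same general fact off the final invariant then gives item 3 directly: $e_{1+\alpha}$ enumerates $\{0\}\cup\Phi_{1+\alpha}$, so $e_{1+\alpha}(1+\beta)=\varphi_{1+\alpha}(\beta)$. I expect this successor/fixpoint comparison, and in particular the bookkeeping of $\min\Phi_\mu\geq\omega$ that rules out small fixpoints across the index shift, to be the main obstacle; everything else is routine.
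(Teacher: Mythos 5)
Your proof is correct and takes essentially the same route as the paper's: both argue by transfinite induction on the Veblen index that the classes enumerated by $e_\mu$ and $\varphi_\mu$ agree up to the extra point $\{0\}$, the key step being that nonzero fixpoints are infinite, so the shift in $1+\beta=\beta$ is absorbed. Your write-up is simply a more explicit rendering (set-level invariant, explicit limit case, a stand-alone index-shift lemma) of the paper's terser fixpoint comparison.
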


\proof\
\paragraph{1.} Since $e_0(0)=0$, $0$ is a fixed point of $e_0$ and, by an easy induction, one can check that it is a fixed point of all $e_\xi$. Thus $e_\xi(0)=0$ for every $\xi\in{\sf On}$.

\paragraph{2.} Obvious from the definition of $e=e_0$.

\paragraph{3.} Let us prove this claim by induction on $\alpha$. Here it suffices to check that, for all $\alpha'<\alpha$, $1+\beta$ is a fixed point of $e_{1+\alpha'}$ if and only if it is a fixed point of $\varphi_{\alpha'}$.

Observe first that all non-zero fixed points of $e_0$ or $\varphi_0$ are infinite, so that $1+\beta=\beta$. Thus
\begin{align*}
e_{\alpha'}(\beta)=\beta&\stackrel{\rm IH}\Leftrightarrow \varphi_{\alpha'}(1+\beta)=\beta\\
&\Leftrightarrow \varphi_{\alpha'}(\beta)=\beta.
\end{align*}
\endproof
\begin{example}\label{newex}
In view of Example \ref{Example:lOmega}, we must have that
\[e_1(2)=\varphi_1(1)=\varepsilon_1.\]
\end{example}
%%%%%%%%%%%%%%%%%%%%%%%%%%%%%%%%%%%%%%%%%%%%%%%%%%%%%%%%%%%%%%%%%

\subsection{Weak normal forms}

%%%%%%%%%%%%%%%%%%%%%%%%%%%%%%%%%%%%%%%%%%%%%%%%%%%

Hyperations in general and hyperexponentials in particular can be used to give different sorts of notation system for ordinals.

For example, given an ordinal ${\x}$, we say an expression
\[
{\x}=\sum_{i\leq I}\ex^{\alpha_i}\beta_i+n
\]
is a {\em Weak Hyperexponential Normal Form} if $I,n<\omega$, $\beta_i<\ex^{\alpha_i}\beta_i$ for $i\leq I$ and $\ex^{\alpha_i}\beta_i\geq \ex^{\alpha_{i+1}}\beta_{i+1}$ whenever $i<I$. Note that Weak Hyperexponential Normal Forms are typically not unique. For example, $\omega^\omega = e^21=e^1e^11$. We do, however, have uniqueness if every $\alpha_i$ is of the form $\omega^\delta$.

Say an ordinal ${\x}$ is {\em definable} by a set of ordinals $\Theta$ if ${\x}$ has a normal form $\sum_{i< I}\ex^{\alpha_i}\beta_i+n$ where each $\alpha_i\in\Theta$ and, inductively, $\Theta$ defines each $\beta_i$. Every set of ordinals defines $0$.

\begin{prop}
Every ordinal ${\x}>0$ has a weak hyperexponential normal form and hence is definable by $\Theta$ large enough.

If we further require that every exponent be of the form $\omega^\delta$, then the WHNF obtained is unique.
\end{prop}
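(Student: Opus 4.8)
The plan is to reduce both the existence and the uniqueness claim to the Veblen Normal Form Theorem (Theorem~\ref{vnft}), via the dictionary between the functions $e_\xi$ and $\varphi_\xi$ recorded in Lemma~\ref{exptoveb}, together with the identity $e^{\omega^\delta}=e_\delta$ furnished by Lemma~\ref{theorem:HyperExponentialsAndVeblen} (note $e$ is normal with $e(0)=0$). The key observation is that an $\omega^\delta$-WHNF is nothing but a Veblen Normal Form computed with $e$ in place of $\varphi$, with the finite part recorded separately: since $e^{\omega^\delta}\gamma=e_\delta(\gamma)$, a hyperexponential term $e^{\omega^\delta}\gamma$ satisfying $\gamma<e^{\omega^\delta}\gamma$ is exactly a Veblen term of $e$, and because $e^{\omega^\delta}0=0$ no such positive term can be finite, which is precisely why the tail $+n$ must be carried along.

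First I would set up a value-preserving bijection $T$ between single $\varphi$-terms meeting the VNF side conditions and single hyperexponential terms meeting the WHNF side conditions. Concretely, $T$ sends a block of $n$ copies of $\varphi_0(0)=1$ to the tail $+n$; it sends $\varphi_0(\beta)$ with $\beta\geq 1$ to $e^{\omega^0}\beta$, these agreeing since $e_0(\beta)=\omega^\beta=\varphi_0(\beta)$ for $\beta\geq 1$; and it sends $\varphi_\alpha(\beta)$ with $\alpha\geq 1$ to $e^{\omega^\alpha}(1+\beta)$, which is legitimate by Lemma~\ref{exptoveb}.3. I would then verify the two facts that make $T$ a genuine dictionary: that it preserves the underlying ordinal (immediate from Lemma~\ref{exptoveb}), and that it matches side conditions, i.e.\ $\beta<\varphi_\alpha(\beta)$ holds iff the (possibly shifted) argument $\gamma$ satisfies $\gamma<e^{\omega^\delta}\gamma$. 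Since $T$ preserves values, it automatically preserves the descending order of the terms, so it carries a genuine VNF to a genuine $\omega^\delta$-WHNF and back.

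With $T$ in hand the proof is short. Existence of an $\omega^\delta$-WHNF, and hence \emph{a fortiori} of an arbitrary WHNF, follows by applying $T$ to the VNF of $\xi$ given by Theorem~\ref{vnft}. Uniqueness follows because $T$ is a bijection: two $\omega^\delta$-WHNFs of $\xi$ pull back under $T^{-1}$ to two VNFs of $\xi$, which coincide by the uniqueness part of Theorem~\ref{vnft}, whence the two WHNFs coincide. For the definability claim I would note that in any WHNF $\sum_{i\leq I}e^{\alpha_i}\beta_i+n$ each summand satisfies $\beta_i<e^{\alpha_i}\beta_i\leq\xi$, so every argument is strictly below $\xi$; recursively expanding the nonzero $\beta_i$ thus terminates along a well-founded, finitely branching---hence finite---tree, and letting $\Theta$ be the finite set of exponents occurring in that tree witnesses definability (as does any larger set).

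The main obstacle I anticipate is bookkeeping rather than conceptual: getting the $+1$ shift exactly right (no shift for $\varphi_0$, a shift by one for $\varphi_\alpha$ with $\alpha\geq 1$) and confirming that the strict inequalities $\beta<\varphi_\alpha(\beta)$ and $\gamma<e^{\omega^\delta}\gamma$ really do correspond under this shift---here one uses that the relevant Veblen values are infinite, so that $1+\beta$ and $\beta$ lie on the same side of the inequality. One must also check the boundary carefully: finite values arise in a VNF only from $\varphi_0(0)=1$ (every other Veblen term is at least $\omega$), which is exactly what guarantees that the finite tail is unambiguously separated and that $T$ is exhaustive, so that uniqueness genuinely transfers.
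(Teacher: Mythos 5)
Your proposal is correct and takes essentially the same route as the paper: both translate the Veblen Normal Form of $\x$ term-by-term into hyperexponential terms via Lemma~\ref{exptoveb} (sending $\varphi_\alpha(\beta)$ to $\ex^{\omega^\alpha}(1+\beta)$ for $\alpha>0$, $\varphi_0(\beta)$ to $\ex^1(\beta)$ for $\beta>0$, and the copies of $\varphi_0(0)$ to the tail ${}+n$), then obtain uniqueness by inverting this translation and appealing to the uniqueness part of Theorem~\ref{vnft}. Your write-up is merely more explicit than the paper's terse proof --- in particular in checking the side-condition bookkeeping and in spelling out why the definability claim follows --- but the underlying argument is identical.
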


\proof
Write ${\x}$ in Veblen Normal Form and, in view of Lemma \ref{exptoveb}, replace $\varphi_\alpha(\beta)$ by $\ex^{\omega^\alpha}(1+\beta)$ for $\alpha >0$, $\varphi_0 (\beta)$ by $e^1(\beta)$ for $\beta>0$. The occurrences of $\varphi_0(0)$ can be captured in the term ${}+n$ in the end of a WHNF.

If all exponents are of the form $\omega^\delta$, we may invert the process to obtain a VNF from a given WHNF; the uniqueness of the latter follows from the uniqueness of the former.
\endproof

%We will call the unique WHNF of $\xi$ with all exponents of the form $\omega^\delta$ {\em the} Hyperexponential Normal Form of $\xi$.
This proposition provides us one way to uniquely choose a particular WHNF. In \cite{FernandezJoostenVeblenAndTheWorm2012} it is shown that $\max \{ \alpha \mid \exists \gamma \ e^\alpha \gamma = \omega^\beta \}$ exists for each ordinal $\beta$. This gives rise to another unique WHNF representation where each $\alpha_i$ in ${\x}=\sum_{i< I}\ex^{\alpha_i}\beta_i+n$ is chosen maximal.

%%%%%%%%%%%%%%%%%%%%%%%%%%%%%%%%%%%%%%%%%%%%%%%%%%%%%%%%%%%%%%%%%

\section{Cohyperations}\label{section:Cohyperations}

As mentioned in Section \ref{section:AdditivityVersusCoadditivity}, left-additive iteration does not allow for injectivity and thus is not possible within the class of normal functions. However, left-additive iteration of {\em initial} functions can be defined and indeed is closely tied to hyperation; we will call this form of iteration {\em cohyperation}. Recall that a function is initial if it maps initial segments to initial segments.

Cohyperations are tailored to yield left adjoints to hyperations; thus, as we observed in Lemma \ref{isanadj} we are bound to work with initial functions. Moreover, left-additivity should naturally be associated to cohyperations; indeed, let $\langle g^\xi\rangle_{\xi\in{\sf On}}$ be a family of left inverses to a weak hyperation $\langle f^\xi\rangle_{\xi\in{\sf On}}$ (i.e., $g^\xi f^\xi={\sf id}$ for all $\xi$), and suppose that $\gamma= f^{\alpha+\beta}\delta$. Observe that
\[
g^\beta g^\alpha f^{\alpha + \beta}\delta = g^\beta g^\alpha f^{\alpha} f^{\beta}\delta = g^\beta f^{\beta}\delta = \delta.
\]
But at the same time $\delta=g^{\alpha+\beta}\gamma$, and thus we have that
\[
g^{\alpha+\beta} \gamma = g^\beta g^\alpha \gamma.
\]
Hence in order to produce left adjoints to hyperations we shall first turn our attention to developing a theory of left-additive iterations of initial functions.

\subsection{Weak cohyperations}

Much of the work for cohyperations is analogous to that for hyperations, although there are important differences. As before, we first define {\em weak} cohyperations of a function $f$, and now impose a {\em maximality} condition to pick out the cohyperation of $f$.

\begin{definition}[Cohyperation]
Let $\Lambda$ be either an ordinal or the class of all ordinals and let $f$ be an initial function.

A {\em weak cohyperation} of $f$ is a family of initial functions $\langle g^{\x}\rangle_{\xi<\Lambda}$ such that
\begin{enumerate}
\item $g^0{\x}={\x}$ for all ${\x}$,
\item $g^1=f$,
\item $g^{{\x}+\zeta}=g^\zeta g^{\x}$.
\end{enumerate}

If $\Lambda={\sf On}$ and $g$ is maximal in the sense that $g^{\x}\zeta\geq h^{\x}\zeta$ for every weak cohyperation $h$ of $f$ and all ordinals $\xi,\zeta$, we say $g$ is {\em the cohyperation} of $f$ and write ${\rm coH}[f]=\langle g^\xi\rangle_{\xi\in{\sf On}}$.
\end{definition}

Weak cohyperations also have some nice properties. The proofs of the following two results are very similar to their hyperation analogue and we omit them.

\begin{lemma}[Properties of weak cohyperations]\label{theorem:propertiesCohyperations}
If $\langle g^{\x}\rangle_{\xi<\Lambda}$ is a weak cohyperation of $f$, then
\begin{enumerate}
\item $g^\zeta\alpha\leq g^{\x}\alpha$ whenever ${\x}<\zeta$,
\item $g^{\zeta}\alpha=g^\zeta g^{\x}\alpha$ whenever ${\x}+\zeta=\zeta$.
\end{enumerate}
\end{lemma}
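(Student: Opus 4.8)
The plan is to prove both items as the exact left-additive mirror images of Lemma \ref{prophyp}.1--2, replacing the right-additivity used there by the left-additivity that weak cohyperations enjoy, and replacing the fact that normal functions are inflationary ($f\mu\geq\mu$) by the dual fact that initial functions are deflationary. Recall that the earlier lemma on initial functions gives $h\mu\leq\mu$ for every initial $h$ and every ordinal $\mu$; this is the single ingredient responsible for the inequality in Item 1 pointing the opposite way to its hyperation counterpart.

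For Item 2 I would simply observe that the conclusion is immediate from left-additivity. Since $\xi+\zeta=\zeta$, clause 3 of the definition of weak cohyperation gives the functional identity
\[
g^\zeta = g^{\xi+\zeta} = g^\zeta g^\xi,
\]
and applying both sides to an arbitrary $\alpha$ yields $g^\zeta\alpha = g^\zeta g^\xi\alpha$, as required.

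For Item 1, suppose $\xi<\zeta$. By the subtraction property (Lemma \ref{theorem:BasicPropertiesOrdinalArithmetic}.1) there is a unique $\eta=-\xi+\zeta$ with $\zeta=\xi+\eta$, and $\eta>0$. Left-additivity then gives $g^\zeta = g^{\xi+\eta} = g^\eta g^\xi$, so for any $\alpha$ we have $g^\zeta\alpha = g^\eta(g^\xi\alpha)$. Now $g^\eta$ is an initial function, so by the deflationary lemma applied to the ordinal $\mu = g^\xi\alpha$ we get $g^\eta(g^\xi\alpha)\leq g^\xi\alpha$. Combining these, $g^\zeta\alpha \leq g^\xi\alpha$, which is the claim.

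There is no real obstacle here: both proofs are one-liners once the correct analogues are identified. The only point that requires care — and the only place the cohyperation argument genuinely diverges from the hyperation one — is the orientation of the monotonicity inequality in Item 1, which is reversed precisely because the factored-off map $g^\eta$ now decreases its argument rather than increasing it. Everything else is a routine substitution of left-additivity for right-additivity.
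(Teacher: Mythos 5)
Your proof is correct and is exactly the argument the paper intends: it omits the proof as ``very similar to the hyperation analogue,'' and your dualization of Lemma \ref{prophyp} --- using left-additivity to factor $g^\zeta=g^{-\xi+\zeta}g^\xi$ and the deflationary property of initial functions in place of the inflationary property of normal functions --- is precisely that mirror argument, with the inequality correctly reversed in Item 1.
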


\begin{lemma}\label{cochar}
Let $f$ be an initial function and let $\vec g=\langle g^{\x}\rangle_{\xi<\Lambda}$ be a family of initial functions. Then, the following are equivalent:
\begin{enumerate}
\item $\vec g$ is a weak cohyperation of $f$;
\item $\vec g$ satisfies the following:
\begin{enumerate}
\item $g^0\alpha=\alpha$,
\item $g^1=f$,
\item $g^{\omega^\rho+{\x}}=g^{{\x}}g^{\omega^\rho}$ provided ${\x}<\omega^\rho+{\x}$,
\item $g^{\omega^\rho}{\x}=g^{\omega^\rho}g^{\omega^\delta}{\x}$ whenever $\delta<\rho$.
\end{enumerate}
\end{enumerate}
\end{lemma}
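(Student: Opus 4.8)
The plan is to prove the two implications separately, getting $1\Rightarrow 2$ by inspection and $2\Rightarrow 1$ as the formal mirror image of the already-proved Lemma \ref{char}. For the forward direction, note that (a) and (b) are literally part of the definition of a weak cohyperation, that (c) is just the instance $g^{\omega^\rho+\xi}=g^\xi g^{\omega^\rho}$ of left additivity, and that (d) is the content of Lemma \ref{theorem:propertiesCohyperations}.2: taking the absorbed exponent to be $\omega^\delta$ and the surviving one to be $\omega^\rho$, the hypothesis $\delta<\rho$ gives $\omega^\delta+\omega^\rho=\omega^\rho$, hence $g^{\omega^\rho}=g^{\omega^\rho}g^{\omega^\delta}$. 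So the forward direction needs no real work.

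For $2\Rightarrow 1$ it suffices to recover full left additivity $g^{\xi+\zeta}=g^\zeta g^\xi$. The conceptual point is that (c), (d) and this target are obtained from the corresponding hyperation relations and target of Lemma \ref{char} simply by reversing the order of composition: hyperation (c) reads $g^{\omega^\rho+\xi}=g^{\omega^\rho}g^\xi$ against our $g^{\omega^\rho+\xi}=g^\xi g^{\omega^\rho}$, hyperation (d) reads $g^{\omega^\delta}g^{\omega^\rho}=g^{\omega^\rho}$ against our $g^{\omega^\rho}g^{\omega^\delta}=g^{\omega^\rho}$, and the goals are $g^{\xi+\zeta}=g^\xi g^\zeta$ against $g^{\xi+\zeta}=g^\zeta g^\xi$. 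Writing $a\bullet b$ for $b\,a$, each of our relations becomes the corresponding hyperation relation with $\bullet$ in place of ordinary composition, and $\bullet$ is again associative with the same two-sided identity $g^0$. Since the derivation of additivity in Lemma \ref{char} is purely algebraic — it invokes only associativity, the identity $g^0$, the relations (c) and (d), and arithmetic facts about the \emph{exponents} (such as $\omega^\alpha+\omega^\beta=\omega^\beta$ for $\alpha<\beta$ and $\zeta<\omega^\alpha+\zeta$ for $\alpha\geq L\zeta$), none of which is touched by reversing composition — the identical argument run in the opposite monoid yields $g^{\xi+\zeta}=g^\xi\bullet g^\zeta=g^\zeta g^\xi$, which is exactly left additivity.

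Concretely, one reruns the induction of Lemma \ref{char} on $\xi$, decomposing $\xi=\xi'+\omega^\alpha$ at its trailing term and $\zeta=\omega^\beta+\zeta'$ at its leading term, so that (c) gives $g^\zeta=g^{\zeta'}g^{\omega^\beta}$ and the induction hypothesis on $\xi'<\xi$ gives $g^\xi=g^{\omega^\alpha}g^{\xi'}$. If $\alpha<\beta$ then (d) collapses $g^{\omega^\beta}g^{\omega^\alpha}=g^{\omega^\beta}$, whence $g^\zeta g^\xi=g^{\zeta'}g^{\omega^\beta}g^{\xi'}=g^\zeta g^{\xi'}=g^{\xi'+\zeta}=g^{\xi+\zeta}$, using $\xi+\zeta=\xi'+\zeta$; if $\alpha\geq\beta$ then $\zeta<\omega^\alpha+\zeta$, so (c) gives $g^\zeta g^{\omega^\alpha}=g^{\omega^\alpha+\zeta}$ and
\[
g^\zeta g^\xi=g^\zeta g^{\omega^\alpha}g^{\xi'}=g^{\omega^\alpha+\zeta}g^{\xi'}=g^{\xi'+\omega^\alpha+\zeta}=g^{\xi+\zeta}.
\]
The only point needing care is that the side conditions guarding each use of (c) — that the decompositions are genuine, i.e. $\zeta'<\omega^\beta+\zeta'$ and $\zeta<\omega^\alpha+\zeta$ — hold exactly as in the hyperation argument; these concern only the exponents (being equivalent to $L\zeta'\leq\beta$ and $L\zeta\leq\alpha$) and are therefore unaffected by the duality. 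One also notes that compositions of initial functions are again initial, so every function appearing is a legitimate initial function. With these observations the reversed computation goes through verbatim, and the two implications together establish the equivalence.
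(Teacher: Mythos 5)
Your proposal is correct and takes essentially the approach the paper intends: the paper omits this proof, stating only that it is ``very similar to the hyperation analogue,'' and your argument is precisely that mirrored version of Lemma \ref{char} --- the forward direction dispatched by the definition together with Lemma \ref{theorem:propertiesCohyperations}.2 (just as Lemma \ref{prophyp} dispatches it in the hyperation case), and the converse by the same induction on $\xi$, decomposing $\xi$ at its trailing CNF term and $\zeta$ at its leading one, with the two cases $\alpha<\beta$ and $\alpha\geq\beta$ handled by (d) and (c) respectively. The opposite-monoid observation is a nice way of making rigorous why the duality costs nothing, but the substance is the same argument.
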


The clause $g^{\omega^\rho}{\x}=g^{\omega^\rho}g^{\omega^\delta}{\x}$ will imply that computing $g^{\omega^\rho}{\x}$ can often be reduced to computing $g^{\omega^\rho}g^\eta{\x}$, for appropriately chosen $\eta$. The next lemma will give us an especially convenient candidate.

Given ordinals $\alpha,\x$ and a family of functions $\vec f=\langle f^\eta\rangle_{\eta<\x}$, we define $\eta^\ast(\vec f,\xi,\alpha)$ as the least ordinal $\eta^\ast<\xi$ such that
\[f^{\eta^\ast}\alpha=\min_{\eta\in [0,\xi)}f^\eta\alpha.\]

Thus, $\eta^\ast$ is the least value which minimizes $f^\eta\alpha$. In particular, if $f^0$ is the identity, then $\eta^\ast>0$ if and only if $f^{\eta^\ast}\alpha<\alpha$.

\begin{lemm}\label{stable}
Given a weak cohyperation $\vec g=\langle g^{\x}\rangle_{\xi<\Lambda}$, ordinals $\xi,\alpha$ and $\eta^\ast=\eta^\ast(\vec g,\xi,\alpha)$,
\begin{enumerate}
\item $g^\eta\alpha>g^{\eta^\ast}\alpha$ whenever $\eta<\eta^\ast$ and
\item $g^\eta\alpha=g^{\eta^\ast}\alpha$ whenever $\eta^\ast<\eta<\xi$.
\end{enumerate}
\end{lemm}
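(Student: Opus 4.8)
The plan is to read both claims straight off the monotonicity property recorded in Lemma~\ref{theorem:propertiesCohyperations}.1, which says that the map $\eta\mapsto g^\eta\alpha$ is weakly decreasing: if $\xi<\zeta$ then $g^\zeta\alpha\leq g^\xi\alpha$. Write $m=\min_{\eta\in[0,\xi)}g^\eta\alpha$. By definition $g^{\eta^\ast}\alpha=m$, and $\eta^\ast$ is the \emph{least} index in $[0,\xi)$ at which this minimum is attained; such a least index exists because the ordinals are well-ordered, so $\eta^\ast$ is well-defined for $\xi>0$. With these two observations in hand, both parts reduce to one-line comparisons.

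For the first claim I would fix $\eta<\eta^\ast$ and combine monotonicity with minimality. Since $\eta<\eta^\ast$, Lemma~\ref{theorem:propertiesCohyperations}.1 gives $g^\eta\alpha\geq g^{\eta^\ast}\alpha=m$. Strictness is then forced by the way $\eta^\ast$ was chosen: were $g^\eta\alpha=m$, the index $\eta$ would attain the minimum below $\eta^\ast$, contradicting that $\eta^\ast$ is the least minimizer. Hence $g^\eta\alpha>g^{\eta^\ast}\alpha$, as required.

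For the second claim I would fix $\eta$ with $\eta^\ast<\eta<\xi$ and squeeze. Monotonicity now runs the other way: from $\eta^\ast<\eta$ we get $g^\eta\alpha\leq g^{\eta^\ast}\alpha=m$. On the other hand $\eta\in[0,\xi)$, so $g^\eta\alpha\geq m$ because $m$ is the minimum over the whole interval. The two inequalities together give $g^\eta\alpha=g^{\eta^\ast}\alpha$. Conceptually, once the decreasing sequence reaches its least value $m$ at $\eta^\ast$, it can neither climb back above $m$ (which is the minimum) nor drop below it (monotonicity keeps it $\leq m$), so it is constant on $[\eta^\ast,\xi)$.

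I do not anticipate any genuine obstacle: the entire content is already packaged in the weak monotonicity of $\eta\mapsto g^\eta\alpha$ together with the well-ordering of the ordinals. The only point worth stating explicitly is the structural consequence that the set of minimizers forms a final segment $[\eta^\ast,\xi)$ of $[0,\xi)$, so that $\eta^\ast$ is a true threshold—strictly below it the values $g^\eta\alpha$ are larger, and at or above it they are constant—which is precisely what the two items assert.
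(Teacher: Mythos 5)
Your proof is correct and follows essentially the same route as the paper's: part 2 comes from combining the weak monotonicity of Lemma~\ref{theorem:propertiesCohyperations}.1 with the minimality of $g^{\eta^\ast}\alpha$, and part 1 from the leastness of $\eta^\ast$. The only (cosmetic) difference is that for part 1 you invoke monotonicity to get $g^\eta\alpha\geq g^{\eta^\ast}\alpha$, whereas the paper observes this already follows from the definition of the minimum, so that part 1 does not depend on left additivity at all.
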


\proof
The first claim follows from the minimality of $\eta^\ast$ and indeed does not depend on left additivity.

Meanwhile, if $\eta\in[\eta^\ast,\xi)$, we use Lemma \ref{theorem:propertiesCohyperations}.1 to see that $g^\eta\alpha\leq g^{\eta^\ast}\alpha$, but by the minimality of $g^{\eta^\ast}\alpha$, equality must hold.
\endproof

\subsection{Cohyperations}
With the definition of $\eta^\ast$ we can give a recursive definition for a weak cohyperation for an initial function $f$ which, we shall see, gives us {\em the} cohyperation of $f$. 
 However, until we prove this fact we shall call it the {\em recursive cohyperation} of $f$:

\begin{definition}[Recursive cohyperation]
Given an initial function $f$, we define a sequence of functions ${\rm coH}'[f]=\langle f^{\xi}\rangle_{\xi\in\mathsf{On}}$ by the following recursion:
\begin{enumerate}
\item $f^{0}\alpha=\alpha$,
\item $f^{\xi} 0=0$,
\item $f^1=f$,
\item $f^{\omega^\rho+\x}=f^{\x}f^{\omega^\rho}$ provided $\omega^\rho+\x<\x$,
\item if $\rho>0$ and $\eta^\ast=\eta^\ast({\rm coH}'[f],\omega^\rho,\alpha)>0$, then 
\[f^{\omega^\rho}{\alpha}=f^{\omega^\rho}f^{\eta^\ast} {\alpha},\]
\item if $\rho>0$ and $\eta^\ast({\rm coH}'[f],\omega^\rho,\alpha)=0$, then
\[f^{\omega^\rho}{\alpha}=\sup_{\beta\in[0,\alpha)}(f^{\omega^\rho}(\beta)+1).\]
\end{enumerate}

\end{definition}

Later we will show that ${\rm coH}'[f]={\rm coH}[f]$, but this will require some work.

First, let us include a simple example to see the recursive definition at work. Recall from Lemma \ref{theorem:BasicPropertiesOrdinalArithmetic} that $\le(\alpha)$ denotes the last exponent of $\alpha$ when written in CNF.

Below, write ${\rm coH}'[\ell]=\langle\ell^\xi\rangle_{\xi\in{\sf On}}$.

\begin{example}\label{Example:lOmega}
$\le^{\omega}\varepsilon_1 = 2$.
\end{example}

\begin{proof}
It is not hard to see that $\le$ is indeed an initial function.  We will first compute $\le^{\omega}\varepsilon_0$. Clearly, for each $n<\omega$ we have that $\le^{n}\varepsilon_0=\varepsilon_0$ whence $\eta^\ast({\rm coH}'[\ell],\omega, \varepsilon_0) = 0$ and
\[
\le^{\omega}\varepsilon_0 = \sup_{\beta\in [0,\varepsilon_0)} (\le^{\omega}(\beta) +1).
\]
Now, for any $\beta < \varepsilon_0$, there is some $n<\omega$ with $\le^{n}\beta = 0$, whence 
\[
\le^{\omega}\beta = \le^{n+ \omega}\beta = \le^{\omega}\le^{n}\beta = \le^{\omega}0 =0.
\]
Consequently, $\le^{\omega}\varepsilon_0 = \sup_{\beta\in [0,\varepsilon_0)} (\le^{\omega}(\beta) +1) = 1$.

Now we can compute $\le^{\omega}\varepsilon_1$. Again, for all $n<\omega$, $\le^{n}\varepsilon_1=\varepsilon_1$ so that $\eta^\ast({\rm coH}'[\ell],\omega, \varepsilon_1) = 0$ and 
\[
\le^{\omega}\varepsilon_1 = \sup_{\beta\in [0,\varepsilon_1)} (\le^{\omega}(\beta) +1).
\]
So, we first need to compute $\le^{\omega}\beta$ for any $\beta \in (\varepsilon_0,\varepsilon_1)$. A straightforward induction shows that for each such $\beta$ we can find a large enough $n<\omega$ so that either $\le^{n}\beta = 0$ or $\le^{n}\beta = \varepsilon_0$, given that $\ell$ has no fixpoints in this interval. Thus, $\le^{\omega}\beta = \le^{\omega}\le^{n}\beta$ which is either $0$ or $\le^{\omega}\varepsilon_0 = 1$. Consequently, $\le^{\omega}\varepsilon_1 = \sup_{\beta\in [0,\varepsilon_1)} (\le^{\omega}(\beta) +1) = 2$.
\end{proof}

The next few lemmas will be used to prove that ${\rm coH}'[f]={\rm coH}[f]$.

\begin{lemma}\label{isinitial}
If $f$ is an initial function and $\langle f^\xi\rangle_{\xi\in{\sf On}}={\rm coH}'[f]$, then $f^\xi$ is an initial function for all $\x$.
\end{lemma}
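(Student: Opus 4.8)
The plan is to prove by transfinite induction on $\xi$ that $f^\xi$ is initial, where $\langle f^\xi\rangle_{\xi\in{\sf On}}={\rm coH}'[f]$. The engine of the argument is a convenient reformulation of initiality: \emph{if $g(\beta)\leq\sup_{\gamma<\beta}(g(\gamma)+1)$ for every $\beta$, then $g$ is initial.} I would prove this by showing $g([0,\delta))$ is an initial segment by induction on $\delta$. The limit case is immediate, since $g([0,\delta))$ is then a union of the nested initial segments $g([0,\delta'))$ for $\delta'<\delta$. For a successor $\delta+1$, the inductive hypothesis gives $g([0,\delta))=[0,\mu)$ for some $\mu$; since the image set equals $[0,\mu)$ we have $\mu=\sup_{\gamma<\delta}(g(\gamma)+1)$, so the assumption yields $g(\delta)\leq\mu$, and hence $g([0,\delta+1))=[0,\mu)\cup\{g(\delta)\}$ equals either $[0,\mu)$ or $[0,\mu+1)$, an initial segment in either case. (Note that this criterion does not force monotonicity, which is correct: initial functions need not be monotone.)

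With this criterion available, the base cases are trivial: $f^0={\sf id}$ and $f^1=f$ are initial by Clauses~1 and~3 together with the hypothesis on $f$. For the composite case, suppose $\xi$ is not of the form $\omega^\rho$, and write $\xi=\omega^\rho+\xi'$ with $\omega^\rho$ the leading term of its Cantor normal form, so that $0<\xi'<\xi$ and $\omega^\rho<\xi$; the side condition of Clause~4 (that $\xi'<\omega^\rho+\xi'$) holds because $\rho$ dominates the leading exponent of $\xi'$. Clause~4 then gives $f^\xi=f^{\xi'}f^{\omega^\rho}$, and since both factors are initial by the induction hypothesis and a composition of initial functions is plainly initial, so is $f^\xi$.

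The main case is $\xi=\omega^\rho$ with $\rho>0$, governed by Clauses~5 and~6; here I would verify the inequality $f^{\omega^\rho}\alpha\leq\sup_{\beta<\alpha}(f^{\omega^\rho}\beta+1)$ for every $\alpha$ and then invoke the criterion above. If $\eta^\ast({\rm coH}'[f],\omega^\rho,\alpha)=0$, then Clause~6 makes this an equality and there is nothing to check. If instead $\eta^\ast>0$, then, as observed right after the definition of $\eta^\ast$ (using that $f^0={\sf id}$), we have $\gamma:=f^{\eta^\ast}\alpha<\alpha$; Clause~5 gives $f^{\omega^\rho}\alpha=f^{\omega^\rho}\gamma$, and taking $\beta=\gamma<\alpha$ in the supremum we get $f^{\omega^\rho}\alpha=f^{\omega^\rho}\gamma<f^{\omega^\rho}\gamma+1\leq\sup_{\beta<\alpha}(f^{\omega^\rho}\beta+1)$. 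Thus the inequality holds in both cases, and $f^{\omega^\rho}$ is initial.

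I expect the main obstacle to be isolating and justifying the initiality criterion and confirming that it genuinely applies in spite of the possible non-monotonicity of $f^{\omega^\rho}$; once that lemma is in hand the induction is short, precisely because Clauses~5 and~6 are designed so that $f^{\omega^\rho}\alpha$ is either the initial-segment-preserving supremum or a value already attained at the strictly smaller argument $\gamma=f^{\eta^\ast}\alpha$. A minor bookkeeping point is the well-foundedness of the lexicographic recursion on $(\xi,\alpha)$ underlying ${\rm coH}'[f]$, but this is the same recursion that makes ${\rm coH}'[f]$ well-defined and so may be taken for granted.
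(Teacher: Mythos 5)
Your proof is correct and takes essentially the same approach as the paper: induction on $\xi$, closure of initial functions under composition for the non-additively-principal case, and for $\xi=\omega^\rho$ the key observation that Clause~5 sends $\alpha$ to a value already attained at the strictly smaller argument $f^{\eta^\ast}\alpha$ while Clause~6 yields exactly the supremum. Your standalone ``initiality criterion'' is precisely the paper's subsidiary induction on $\zeta$ (showing $f^{\omega^\rho}[0,\zeta]$ is an initial segment), just packaged as a separate lemma.
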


\proof
The identity, i.e. $f^{0}$, clearly is an initial function. We proceed by induction on $\x$, assuming that $f^{\x'}$ is an initial function for all $\x'<\x$.

For the case $\x=\gamma+\delta$ with $\gamma,\delta<\xi$, a very easy argument shows that the composition of initial functions is initial, so $f^{\x}$ is initial. 
%Further, (\ref{firstco}) is trivially satisfied.

Now, assume $\x=\omega^\rho$. Let us check that $f^{\omega^\rho}$ is initial. Assume $f^{\eta}$ is initial for all $\eta<\omega^\rho$. We will prove, by a subsidiary induction on $\zeta$, that $f^{\omega^\rho}[0,\zeta]$ is an initial segment, assuming that, given $\zeta'<\zeta$, $f^{\omega^\rho}[0,\zeta']$ is an initial segment.

First we note that,
\[
f^{\omega^\rho}[0,\zeta]=\cbra  f^{\omega^\rho}\zeta\cket\cup\bigcup_{\zeta'<\zeta}f^{\eta}[0,\zeta'],
\]
and by induction on $\zeta$, $\bigcup_{\zeta'<\zeta}f^{\eta}[0,\zeta']$
is an initial segment, given that it is a union of initial segments; call it $[0,\beta)$.

Now, if $\eta^\ast=\eta^\ast({\rm coH}'[f],\omega^\rho,\zeta)>0$, we see that $f^{\eta^\ast}\zeta<\zeta$ and thus
\[f^{\omega^\rho}\zeta=f^{\omega^\rho}f^{\eta^\ast}\zeta\in[0,\beta),\]
so that $f^{\omega^\rho}[0,\zeta]=[0,\beta)$, an initial segment.

Otherwise,
\[
f^{\omega^\rho}\zeta=\sup_{\vartheta\in[0,\zeta)} (f^{\omega^\rho}(\vartheta)+1)=\beta.
\]

Thus
\[f^{\omega^\rho}[0,\zeta]=[0,\beta],\]
which is also an initial segment, as claimed.
\endproof

\begin{lemma}\label{isweakco}
If $f$ is an initial function, then $\langle f^\xi\rangle_{\xi\in{\sf On}}={\rm coH}'[f]$ is a weak cohyperation of $f$.
\end{lemma}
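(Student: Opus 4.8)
The plan is to verify the three defining clauses of a weak cohyperation for $\langle f^\xi\rangle_{\xi\in{\sf On}}={\rm coH}'[f]$. The clauses $g^0={\sf id}$ and $g^1=f$ are immediate from the recursion, and Lemma \ref{isinitial} guarantees that each $f^\xi$ is initial, so only left additivity remains. Following the pattern of the hyperation case, I would prove by induction on $\Lambda$ that $\langle f^\xi\rangle_{\xi<\Lambda}$ is a weak cohyperation and invoke Lemma \ref{cochar}: properties (a)--(c) of that lemma are literally clauses 1, 3 and 4 of the recursive definition, so the whole burden falls on property (d), namely $f^{\omega^\rho}\xi=f^{\omega^\rho}f^{\omega^\delta}\xi$ for $\delta<\rho$. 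By the induction on $\Lambda$ I may assume that $\langle f^\eta\rangle_{\eta<\omega^\rho}$ is already a weak cohyperation, so left additivity is available for all exponents summing to less than $\omega^\rho$.

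Fix $\delta<\rho$ and argue by a subsidiary induction on $\xi$. Write $m=\min_{\eta<\omega^\rho}f^\eta\xi=f^{\eta^\ast}\xi$, where $\eta^\ast=\eta^\ast(\vec f,\omega^\rho,\xi)$. The key computation is that pre-composing by $f^{\omega^\delta}$ does not change this minimum. Since $\omega^\rho$ is closed under addition and $\delta<\rho$, the map $\eta\mapsto\omega^\delta+\eta$ sends $[0,\omega^\rho)$ onto $[\omega^\delta,\omega^\rho)$; using left additivity below $\omega^\rho$ we get $f^\eta f^{\omega^\delta}\xi=f^{\omega^\delta+\eta}\xi$, so as $\eta$ ranges over $[0,\omega^\rho)$ the value $f^\eta f^{\omega^\delta}\xi$ ranges exactly over $\{f^\zeta\xi:\zeta\in[\omega^\delta,\omega^\rho)\}$. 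By Lemma \ref{stable} we have $f^\zeta\xi=m$ for every $\zeta\in[\eta^\ast,\omega^\rho)$, and since $\max(\eta^\ast,\omega^\delta)<\omega^\rho$ this tail meets $[\omega^\delta,\omega^\rho)$; hence the minimum of $f^\eta f^{\omega^\delta}\xi$ over $\eta<\omega^\rho$ is again $m$.

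With this in hand I would split on $\eta^\ast$. If $\eta^\ast=0$, then because initial functions are deflationary ($f^\eta\xi\le\xi$) every $f^\eta\xi$ with $\eta<\omega^\rho$ equals $\xi$; in particular $f^{\omega^\delta}\xi=\xi$ and property (d) is trivial. If $\eta^\ast>0$, clause 5 gives $f^{\omega^\rho}\xi=f^{\omega^\rho}f^{\eta^\ast}\xi=f^{\omega^\rho}m$. Writing $m'=f^{\omega^\delta}\xi$ and $\eta^{\ast\ast}=\eta^\ast(\vec f,\omega^\rho,m')$, the previous paragraph shows $f^{\eta^{\ast\ast}}m'=m$; if $\eta^{\ast\ast}>0$ then clause 5 yields $f^{\omega^\rho}m'=f^{\omega^\rho}f^{\eta^{\ast\ast}}m'=f^{\omega^\rho}m$, while if $\eta^{\ast\ast}=0$ then $m'=m$ outright. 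Either way $f^{\omega^\rho}f^{\omega^\delta}\xi=f^{\omega^\rho}m=f^{\omega^\rho}\xi$, establishing (d).

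The only real obstacle is the minimum-preservation step of the second paragraph: it is what forces one to identify the exact image $[\omega^\delta,\omega^\rho)$ of $\eta\mapsto\omega^\delta+\eta$ and to combine the monotonicity and stability of $\eta^\ast$ from Lemmas \ref{theorem:propertiesCohyperations} and \ref{stable} with left additivity below $\omega^\rho$. Once that is secured the case analysis is routine, and the remaining zero, successor and limit bookkeeping inside the subsidiary induction on $\xi$ goes through exactly as in the hyperation argument.
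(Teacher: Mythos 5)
Your proof is correct, and its outer skeleton is the paper's: induction on $\Lambda$, Lemma \ref{isinitial} for initiality, and Lemma \ref{cochar} to reduce everything to property (d), with the induction hypothesis making $\langle f^\eta\rangle_{\eta<\omega^\rho}$ a weak cohyperation so that left additivity and Lemma \ref{stable} are available below $\omega^\rho$. The core verification of (d) is where you genuinely differ. The paper proves the stronger absorption $f^{\omega^\rho}\zeta=f^{\omega^\rho}f^{\eta}\zeta$ for \emph{every} $\eta<\omega^\rho$ by well-founded descent on the argument: in the nontrivial case $f^{\eta}\zeta<\zeta$ it invokes the claim inductively at the smaller ordinal $f^{\eta}\zeta$, and with $\vartheta=-\eta+\eta^\ast$ left additivity collapses $f^{\vartheta}f^{\eta}$ to $f^{\eta^\ast}$, after which Clause 5 closes the loop. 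You instead avoid any induction on the argument: identifying the image of $\eta\mapsto\omega^\delta+\eta$ as the tail $[\omega^\delta,\omega^\rho)$, and using the stabilization of $f^{\zeta}\xi$ at the value $m$ on $[\eta^\ast,\omega^\rho)$ (Lemma \ref{stable}), you show that $\xi$ and $f^{\omega^\delta}\xi$ have the same minimum $m$ over exponents below $\omega^\rho$, so Clause 5 (or the degenerate case $\eta^{\ast\ast}=0$) reduces both $f^{\omega^\rho}\xi$ and $f^{\omega^\rho}f^{\omega^\delta}\xi$ to $f^{\omega^\rho}m$. What each buys: the paper's route directly delivers the general absorption later recorded as clauses 5' and 6', whereas yours proves just the instance Lemma \ref{cochar} requires but replaces the subsidiary induction by a one-shot computation; the general fact is then recovered for free from Lemma \ref{theorem:propertiesCohyperations}.2 once the lemma is established. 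One cosmetic point: your announced ``subsidiary induction on $\xi$'' and the closing remark about zero, successor and limit bookkeeping are vestigial --- the argument you actually give for (d) is induction-free in $\xi$, so that scaffolding can simply be deleted.
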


\proof
Let us show by induction on $\Lambda$ that $\langle f^{\xi}\rangle_{\xi<\Lambda}$ is a weak cohyperation.

We have seen in Lemma \ref{isinitial} that $f^{\xi}$ is initial for all $\xi$. Thus, in view of Lemma \ref{cochar}, we must check that 
\begin{equation}\label{firstco}
f^{\omega^\rho+{\x}}=f^{{\x}}f^{\omega^\rho}
\end{equation}
provided ${\x}<\omega^\rho+{\x}<\Lambda$ and
\begin{equation}\label{secondco}
f^{\omega^\rho}{\x}=f^{\omega^\rho}f^{\omega^\delta}{\x}
\end{equation}
whenever $\omega^\delta<\omega^\rho<\Lambda$. We may assume, inductively, that $\langle f^{\xi}\rangle_{\xi<\omega^\rho}$ is a weak cohyperation.

Note, however, that (\ref{firstco}) is satisfied by the definition of $f^{\omega^\rho+\xi}$, so it suffices to show (\ref{secondco}). More generally we shall show that, if $\eta<\omega^\rho$ and $\zeta$ is any ordinal, $f^{\omega^\rho}\zeta=f^{\omega^\rho}f^{\eta}\zeta$.

If $f^{\eta}\zeta=\zeta$, there is nothing to prove. Hence we are left with the case $f^{\eta}\zeta<\zeta$, which in particular implies that $\eta^\ast=\eta^\ast({\rm coH}'[f],\omega^\rho,\zeta)$ is non-zero.

Here we must consider two subcases. The case $\eta>\eta^\ast$ is easy, since $f^{\eta^\ast}\zeta=f^{\eta}\zeta$ and by definition $f^{\omega^\rho}\zeta=f^{\omega^\rho}f^{\eta^\ast}\zeta$; thus we focus on the case $\eta<\eta^\ast$.

By induction on $f^\eta\zeta<\zeta$, we may assume that, given $\vartheta<\omega^\rho$,
\[
f^{\omega^\rho}f^{\eta}\zeta= f^{\omega^\rho}f^{\vartheta}f^{\eta}\zeta=f^{\omega^\rho}f^{\eta+\vartheta}\zeta,
\]
where the second equality uses the hypothesis that $\langle f^{\xi}\rangle_{\xi<\omega^\rho}$ is a weak cohyperation.

In particular, for $\vartheta=-\eta+\eta^\ast$ we see that
\[f^{\omega^\rho}f^{\eta}\zeta= f^{\omega^\rho}f^{\eta^\ast}\zeta=f^{\omega^\rho}\zeta.\]
Setting $\eta=\omega^\delta$ we obtain (\ref{secondco}).
\endproof

Now that we see that the recursive cohyperation actually defines a cohyperation we are allowed to replace the clauses 5 and 6 respectively by\\\\
\begin{tabular}{ll}
5'. & {If $\rho>0$ and $f^{\eta} {\alpha} < \alpha$ for some $\eta<\omega^\rho$, then} $f^{\omega^\rho}{\alpha}=f^{\omega^\rho}f^{\eta} {\alpha}$,\\\\
6'. &
If $\rho>0$ and $f^{\eta} {\alpha} = \alpha$ for all $\eta< \omega^\rho$, then
\end{tabular}
\[f^{\omega^\rho}{\alpha}=\sup_{\beta\in[0,\alpha)}(f^{\omega^\rho}(\beta)+1).\]

We are almost ready to prove that that the recursive cohyperation actually defines the unique cohyperation, but we first need a technical lemma.

\begin{lemm}\label{lessthan}
Let $f$ be an initial function and $\langle f^\xi\rangle_{\xi\in{\sf On}}={\rm coH}'[f]$.

If $\eta^\ast=\eta^\ast({\rm coH}'[f],\omega^\rho,\alpha)$ and $\beta< f^{\eta^\ast}\alpha$ then $f^{\omega^\rho}\beta< f^{\omega^\rho}\alpha.$
\end{lemm}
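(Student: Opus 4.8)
The plan is to avoid induction entirely and reduce the whole statement to the supremum clause (6') of the recursion, by showing that $\alpha':=f^{\eta^\ast}\alpha$ is a common fixpoint of all the functions $f^\eta$ with $\eta<\omega^\rho$. Once that is established, the recursion for $f^{\omega^\rho}\alpha'$ is forced to use the sup-clause, and strict monotonicity of $f^{\omega^\rho}$ on arguments below $\alpha'$ is immediate.

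First I would set $\alpha'=f^{\eta^\ast}\alpha$ and note that, by the definition of $\eta^\ast$, $\alpha'=\min_{\theta<\omega^\rho}f^\theta\alpha$. The key step is to prove that $f^\eta\alpha'=\alpha'$ for every $\eta<\omega^\rho$. Here I use that ${\rm coH}'[f]$ is a weak cohyperation (Lemma \ref{isweakco}) and hence left-additive, so that $f^\eta\alpha'=f^\eta f^{\eta^\ast}\alpha=f^{\eta^\ast+\eta}\alpha$. Since $\omega^\rho$ is additively indecomposable and $\eta^\ast,\eta<\omega^\rho$, we have $\eta^\ast+\eta<\omega^\rho$, so $f^{\eta^\ast+\eta}\alpha\geq\alpha'$ by minimality of $\alpha'$; combined with $f^\eta\alpha'\leq\alpha'$ (because $f^\eta$ is initial by Lemma \ref{isinitial}, and initial functions satisfy $f\xi\leq\xi$), this forces $f^\eta\alpha'=\alpha'$.

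It follows that $\eta^\ast({\rm coH}'[f],\omega^\rho,\alpha')=0$, so the recursion computes $f^{\omega^\rho}\alpha'$ through clause (6'): $f^{\omega^\rho}\alpha'=\sup_{\gamma\in[0,\alpha')}(f^{\omega^\rho}(\gamma)+1)$. Since $\beta<\alpha'$ by hypothesis, $\beta$ is among the indices of this supremum, whence $f^{\omega^\rho}\beta<f^{\omega^\rho}(\beta)+1\leq f^{\omega^\rho}\alpha'$. Finally I identify $f^{\omega^\rho}\alpha'$ with $f^{\omega^\rho}\alpha$: if $\eta^\ast>0$ this is precisely clause (5'), namely $f^{\omega^\rho}\alpha=f^{\omega^\rho}f^{\eta^\ast}\alpha$; if $\eta^\ast=0$ then $\alpha'=\alpha$ and there is nothing to identify. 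Combining these gives $f^{\omega^\rho}\beta<f^{\omega^\rho}\alpha$, as desired.

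The main obstacle — indeed the only genuinely nontrivial point — is the fixpoint claim $f^\eta\alpha'=\alpha'$ for all $\eta<\omega^\rho$; everything after it is bookkeeping with the recursion clauses. That claim rests squarely on the interplay between left-additivity and the additive indecomposability of $\omega^\rho$, which guarantees that $\eta^\ast+\eta$ does not escape the range $[0,\omega^\rho)$ over which $\alpha'$ is the minimum. I would also verify the degenerate case $\eta^\ast=0$ (where $\alpha'=\alpha$ and every $f^\eta$ already fixes $\alpha$) to confirm that the uniform argument handles it without a separate case split.
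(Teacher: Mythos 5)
Your proposal is correct and takes essentially the same approach as the paper: both proofs hinge on showing that $f^{\eta^\ast}\alpha$ is a common fixpoint of every $f^\eta$ with $\eta<\omega^\rho$ (via left-additivity of ${\rm coH}'[f]$ together with the additive indecomposability of $\omega^\rho$), so that clause 6 computes $f^{\omega^\rho}$ at that point as a supremum, giving the strict inequality, after which clause 5 identifies $f^{\omega^\rho}f^{\eta^\ast}\alpha$ with $f^{\omega^\rho}\alpha$. The only differences are presentational: the paper handles $\eta^\ast=0$ as a base case and reduces $\eta^\ast>0$ to it, whereas you treat both uniformly, and you justify the fixpoint claim by a minimality--initiality squeeze where the paper invokes the stabilization property of Lemma \ref{stable}.
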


\proof
First assume that $\eta^\ast=0$, that is, $f^{\eta}\alpha=\alpha$ for all $\eta<\omega^\rho$.

Then, by Clause 6,
\[f^{\omega^\rho}{\alpha}=\sup_{\gamma<\alpha}(f^{\omega^\rho}(\gamma)+1),\]
so in particular $f^{\omega^\rho}\beta< f^{\omega^\rho}\alpha$.

We now consider the case that $\eta^\ast >0$ and note that
\[
f^{\eta}f^{\eta^\ast}\alpha=f^{\eta^\ast+\eta}\alpha=f^{\eta^\ast}\alpha
\]
for all $\eta<\omega^\rho$. But this shows that $\eta^\ast({\rm coH}'[f],\omega^\rho,f^{\eta^\ast}\alpha)=0$.

It follows from the first case that, if $\beta<f^{\eta^\ast}\alpha$,
\[
f^{\omega^\rho}\beta< f^{\omega^\rho}f^{\eta^\ast}\alpha=f^{\omega^\rho}\alpha.
\]
\endproof
We are now ready to prove our main theorem.

\begin{theorem}
If $f$ is an initial function then ${\rm coH}'[f]$ is its cohyperation, that is, \[{\rm coH}'[f]={\rm coH}[f].\]
\end{theorem}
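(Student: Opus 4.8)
By Lemma \ref{isweakco} the family ${\rm coH}'[f]=\langle f^\xi\rangle_{\xi\in{\sf On}}$ is already known to be a weak cohyperation of $f$, so the only thing left is to establish \emph{maximality}: for any weak cohyperation $\langle g^\xi\rangle_{\xi\in{\sf On}}$ of $f$ one has $g^\xi\zeta\leq f^\xi\zeta$ for all $\xi,\zeta$. The plan is to prove this by a main induction on $\xi$ with a subsidiary induction on $\zeta$, so that the inequality may be assumed for every pair $(\xi',\zeta')$ with $\xi'<\xi$, and for every pair $(\xi,\zeta')$ with $\zeta'<\zeta$. The cases $\xi\leq 1$ are immediate, and the case $\zeta=0$ is trivial since $f^\xi 0=0$ by Clause 2 while $g^\xi 0=0$ because $g^\xi$ is initial.

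Next I would reduce the comparison to the prime-power case $\xi=\omega^\rho$, using Lemma \ref{cochar} and the left-additivity of both families. Writing $\xi=\omega^\rho+\vartheta$ with $0<\vartheta$ and $\vartheta<\omega^\rho+\vartheta$ (so $\omega^\rho$ is the leading CNF term), Clause 4 gives $f^\xi\zeta=f^\vartheta f^{\omega^\rho}\zeta$ while left-additivity gives $g^\xi\zeta=g^\vartheta g^{\omega^\rho}\zeta$. Applying the main induction hypothesis first at $\omega^\rho<\xi$ and then at $\vartheta<\xi$, and using that the recursive cohyperates are weakly order-preserving, one chains $g^\vartheta g^{\omega^\rho}\zeta\leq f^\vartheta g^{\omega^\rho}\zeta\leq f^\vartheta f^{\omega^\rho}\zeta$. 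Hence the real content lies in the case $\xi=\omega^\rho$ with $\rho>0$.

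For this core case I would set $\eta^\ast=\eta^\ast({\rm coH}'[f],\omega^\rho,\zeta)$. Since $\eta^\ast<\omega^\rho$ we have $\eta^\ast+\omega^\rho=\omega^\rho$, so Lemma \ref{theorem:propertiesCohyperations}.2 applies to \emph{both} families, giving $g^{\omega^\rho}\zeta=g^{\omega^\rho}g^{\eta^\ast}\zeta$ and $f^{\omega^\rho}\zeta=f^{\omega^\rho}f^{\eta^\ast}\zeta$. If $\eta^\ast>0$ then $f^{\eta^\ast}\zeta<\zeta$; the main hypothesis at $\eta^\ast<\omega^\rho$ gives $g^{\eta^\ast}\zeta\leq f^{\eta^\ast}\zeta<\zeta$, and the subsidiary hypothesis at the strictly smaller argument $g^{\eta^\ast}\zeta$ gives $g^{\omega^\rho}g^{\eta^\ast}\zeta\leq f^{\omega^\rho}g^{\eta^\ast}\zeta$. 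Finally Lemma \ref{lessthan}, applied with $\beta=g^{\eta^\ast}\zeta\leq f^{\eta^\ast}\zeta$, yields $f^{\omega^\rho}g^{\eta^\ast}\zeta\leq f^{\omega^\rho}f^{\eta^\ast}\zeta=f^{\omega^\rho}\zeta$ (strictly when the bound on $\beta$ is strict, and trivially otherwise), and composing these inequalities closes the subcase. If instead $\eta^\ast=0$, then $f^{\omega^\rho}\zeta=\sup_{\beta<\zeta}(f^{\omega^\rho}\beta+1)$ by Clause 6; since $g^{\omega^\rho}$ is initial one has $g^{\omega^\rho}\zeta\leq\sup_{\beta<\zeta}(g^{\omega^\rho}\beta+1)$, and bounding each $g^{\omega^\rho}\beta$ by $f^{\omega^\rho}\beta$ via the subsidiary hypothesis gives $g^{\omega^\rho}\zeta\leq\sup_{\beta<\zeta}(f^{\omega^\rho}\beta+1)=f^{\omega^\rho}\zeta$.

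The delicate point, which I expect to be the main obstacle, is the subcase $\eta^\ast>0$: after replacing $\zeta$ by the smaller stable argument $f^{\eta^\ast}\zeta$, one cannot simply invoke monotonicity of $f^{\omega^\rho}$ to transport the inequality back, because $g^{\eta^\ast}\zeta$ is only $\leq f^{\eta^\ast}\zeta$, not equal to it. This is precisely the gap that Lemma \ref{lessthan} was designed to fill, certifying that $f^{\omega^\rho}$ is strictly increasing below the stable value $f^{\eta^\ast}\zeta$. I therefore expect the interplay of Lemma \ref{lessthan} with the two inductions, together with checking the weak monotonicity of the recursive cohyperates used in the additive reduction, to be the only nontrivial steps; the treatment of limit $\zeta$ requires no separate case, as it is absorbed uniformly into the branching on $\eta^\ast$.
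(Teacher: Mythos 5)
Your core case $\xi=\omega^\rho$ is correct and coincides with the paper's proof: the subcase $\eta^\ast>0$ (stabilize via $\eta^\ast$, apply the subsidiary induction at the smaller argument $g^{\eta^\ast}\zeta$, transport back with Lemma \ref{lessthan}) is literally the paper's ``Clause 5'' case, and the subcase $\eta^\ast=0$, using initiality of $g^{\omega^\rho}$, is its ``Clause 6'' case. The treatment of $\xi\leq 1$ and $\zeta=0$ also matches.

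The gap is in your reduction of composite $\xi$. You decompose $\xi=\omega^\rho+\vartheta$ with the leading term applied \emph{first}, and justify the step $f^\vartheta g^{\omega^\rho}\zeta\leq f^\vartheta f^{\omega^\rho}\zeta$ by appeal to ``the recursive cohyperates being weakly order-preserving.'' This is not a property one can check; it is false. Initial functions are in general not monotone, and the failure already occurs at $\vartheta=1$: for the paper's running example $f=\ell$ (the last-exponent function) we have $\omega\leq\omega+1$ but $\ell(\omega)=1>0=\ell(\omega+1)$, and the cohyperates $f^\vartheta$ inherit this non-monotonicity. So knowing $g^{\omega^\rho}\zeta\leq f^{\omega^\rho}\zeta$ from the main induction hypothesis gives no control over the comparison after applying $f^\vartheta$ on the outside: the two arguments are genuinely distinct ordinals, and $f^\vartheta$ need not respect their order. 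This is not a fringe case — it bites at every successor exponent $\xi$, i.e. whenever $\vartheta$ ends in $\omega^0$.

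The paper's ``Clause 4'' case is engineered precisely to avoid ever needing monotonicity of an arbitrary iterate $f^\vartheta$. It decomposes in the opposite direction, $\xi=\xi'+\omega^\rho$ with the \emph{last} CNF term outermost, so that $f^\xi\alpha=f^{\omega^\rho}f^{\xi'}\alpha$; it then inserts $f^{\eta^\ast}$, where $\eta^\ast=\eta^\ast({\rm coH}'[f],\omega^\rho,f^{\xi'}\alpha)$, to replace the inner value by $f^{\xi'+\eta^\ast}\alpha$, a point that is \emph{stable} under all iterates $f^\eta$ with $\eta<\omega^\rho$, and only then compares with $g^{\xi'+\eta^\ast}\alpha$. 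Lemma \ref{lessthan} asserts exactly that $f^{\omega^\rho}$ is increasing below such stable points — that is the only monotonicity available in this setting, and it is available only for additively indecomposable iterates evaluated at stabilized arguments. Your argument places the monotonicity burden where no such lemma exists (and where the needed statement is outright false), so the composite case as written does not go through; it has to be reorganized along the paper's lines, with the indecomposable part outermost and the argument stabilized before any order comparison is transported.
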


\proof
Let $\langle f^\xi\rangle_{\xi\in{\sf On}}={\rm coH}'[f]$.

In view of Lemma \ref{isweakco}, all that remains for us to prove is that
 $f^{\xi}$ bounds every weak cohyperation $g^\xi$, i.e., that $f^{\xi}\zeta\geq g^\xi\zeta$ for all $\xi,\zeta$.

Now we work by induction on $\xi$ and $\zeta$, considering several cases depending of which clause defines $f^{\xi}\alpha$.

\paragraph{Clauses 1-3.} These clauses establish the desired inequality for $\xi\leq 2$.

\paragraph{Clause 4.} Write $\x=\x'+\omega^\rho$ and let $\eta^\ast=\eta^\ast({\rm coH}'[f],\omega^\rho,f^{\xi'}\alpha)$. We note that
\[
f^{\omega^\rho}f^{\x'}\alpha =f^{\omega^\rho}f^{\eta^\ast}f^{\xi'}\alpha=f^{\omega^\rho} f^{\xi'+\eta^\ast}\alpha
\]
and further
\[
\eta^\ast({\rm coH}'[f],\omega^\rho,f^{\xi'+\eta^\ast}\alpha)=0.
\]
Now, we have by induction on $\xi'+\eta^\ast<\omega^\rho$ that $f^{\xi'+\eta^\ast} \alpha\geq g^{\xi'+\eta^\ast}\alpha$, and thus we can use Lemma \ref{lessthan} to see that
\[
f^{\omega^\rho}f^{\eta^\ast}f^{\xi'}\alpha=f^{\omega^\rho}f^{\xi'+\eta^\ast} \alpha\geq f^{\omega^\rho}g^{\xi'+\eta^\ast}\alpha\stackrel{\mathrm{IH}}\geq g^{\omega^\rho}g^{\xi'+\eta^\ast}\alpha=g^\x\alpha.
\]

\paragraph{Clause 5.} In this case, $\xi=\omega^\rho$ and we have that
\[
f^{\omega^\rho}\zeta=f^{\omega^\rho}f^{\eta^\ast}\zeta,
\]
where $\eta^\ast=\eta^\ast({\rm coH}'[f],\omega^\rho,\alpha)>0$.

By induction we have $g^{\eta^\ast}\alpha\leq f^{\eta^\ast}\alpha$ and we use Lemma \ref{lessthan} to see that
\[f^{\omega^\rho}\alpha\geq f^{\omega^\rho}g^{\eta^\ast}\alpha\stackrel{\mathrm{IH}}\geq g^{\omega^\rho}g^{\eta^\ast}\alpha=g^{\omega^\rho}\alpha.\]

\paragraph{Clause 6.} We see that
\[f^\xi\alpha=\sup_{\beta<\alpha}(f^\xi(\beta)+1)\stackrel{\rm IH}\geq \sup_{\beta<\alpha}(g^\xi(\beta)+1)\geq g^\xi\alpha,\]
where the last inequality uses the assumption that $g^\xi$ is initial.
\endproof

Like hyperations, cohyperations of cohyperations behave as one would expect. We present the following without proof, which is very similar to its hyperation analogue.

\begin{lemm}
If $f$ is an initial function and $\xi,\zeta$ are ordinals then $(f^{\xi})^\zeta=f^{\xi\cdot\zeta}$.
\end{lemm}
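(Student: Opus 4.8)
The plan is to mirror the proof of the hyperation analogue, replacing right-additivity by the left-additivity of cohyperations and the hyperation recursion clauses by Clauses 5$'$ and 6$'$ of the recursive cohyperation. Writing $\langle f^\eta\rangle_{\eta\in{\sf On}}={\rm coH}[f]$ and $\langle (f^\xi)^\eta\rangle_{\eta\in{\sf On}}={\rm coH}[f^\xi]$, I would prove $(f^\xi)^\zeta\alpha=f^{\xi\cdot\zeta}\alpha$ by induction on $\zeta$ with an auxiliary induction on the argument $\alpha$. The cases $\xi=0$ and $\zeta=0$ are immediate, so assume both are positive.

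If $\zeta$ is additively decomposable, write $\zeta=\gamma+\delta$ with $0<\gamma,\delta<\zeta$ (splitting off the last CNF term). Left-additivity of cohyperations and the induction hypothesis give
\[(f^\xi)^{\gamma+\delta}=(f^\xi)^\delta(f^\xi)^\gamma=f^{\xi\cdot\delta}f^{\xi\cdot\gamma}=f^{\xi\cdot\gamma+\xi\cdot\delta}=f^{\xi\cdot(\gamma+\delta)},\]
using left-distributivity of ordinal multiplication in the last step. This leaves the indecomposable case $\zeta=\omega^\gamma$ with $\gamma>0$; here $\xi\cdot\omega^\gamma=\omega^\delta$ for $\delta=L\xi+\gamma>0$, so Clauses 5$'$ and 6$'$ apply to both $(f^\xi)^{\omega^\gamma}$ and $f^{\omega^\delta}$.

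The heart of the argument is to check that the same clause fires on both sides. If some $\eta<\omega^\gamma$ has $(f^\xi)^\eta\alpha<\alpha$, then by the main induction hypothesis $(f^\xi)^\eta\alpha=f^{\xi\cdot\eta}\alpha<\alpha$ with $\xi\cdot\eta<\omega^\delta$; Clause 5$'$ gives $(f^\xi)^{\omega^\gamma}\alpha=(f^\xi)^{\omega^\gamma}(f^\xi)^\eta\alpha$, the auxiliary hypothesis on the smaller argument $(f^\xi)^\eta\alpha$ rewrites this as $f^{\omega^\delta}(f^{\xi\cdot\eta}\alpha)$, and Clause 5$'$ applied to $f^{\omega^\delta}$ collapses it to $f^{\omega^\delta}\alpha$. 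Otherwise $(f^\xi)^\eta\alpha=\alpha$ for all $\eta<\omega^\gamma$, and Clause 6$'$ yields $(f^\xi)^{\omega^\gamma}\alpha=\sup_{\beta<\alpha}((f^\xi)^{\omega^\gamma}\beta+1)=\sup_{\beta<\alpha}(f^{\omega^\delta}\beta+1)$ by the auxiliary hypothesis.

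The main obstacle is to justify that Clause 6$'$ also governs $f^{\omega^\delta}\alpha$ in this second case, i.e.\ that $f^\nu\alpha=\alpha$ for every $\nu<\omega^\delta$. For this I would use that $\{\xi\cdot\eta:\eta<\omega^\gamma\}$ is cofinal in $\xi\cdot\omega^\gamma=\omega^\delta$, so each such $\nu$ satisfies $\nu<\xi\cdot\eta$ for some $\eta<\omega^\gamma$; then Lemma \ref{theorem:propertiesCohyperations}.1 gives $\alpha=f^{\xi\cdot\eta}\alpha\leq f^\nu\alpha$, while initiality of $f^\nu$ gives $f^\nu\alpha\leq\alpha$, forcing $f^\nu\alpha=\alpha$. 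Hence Clause 6$'$ gives $f^{\omega^\delta}\alpha=\sup_{\beta<\alpha}(f^{\omega^\delta}\beta+1)$, matching the expression above and closing the induction.
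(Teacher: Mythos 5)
Your proposal is correct and is precisely the argument the paper intends: the paper states this lemma without proof, remarking only that the proof is very similar to its hyperation analogue, and your induction on $\zeta$ (splitting off a CNF term in the decomposable case, then treating $\zeta=\omega^\gamma$ with $\xi\cdot\omega^\gamma=\omega^\delta$) with auxiliary induction on $\alpha$, organized by which of Clauses 5$'$/6$'$ fires, is the faithful left-additive adaptation of that proof. You also correctly supply the one step that has no hyperation counterpart --- using cofinality of $\{\xi\cdot\eta:\eta<\omega^\gamma\}$ in $\omega^\delta$ together with Lemma \ref{theorem:propertiesCohyperations}.1 and initiality to see that Clause 6$'$ governs both sides simultaneously --- and the only case left implicit, $\zeta=1$, is immediate since $(f^\xi)^1=f^\xi=f^{\xi\cdot 1}$.
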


%%%%%%%%%%%%%%%%%%%%%%%%%%%%%%%%%%%%%%%%%%%%%%%%%%%%

\section{Exact sequences}\label{exseq}

One nice thing about cohyperations is that, in a sense, they need only be defined locally. To make this precise, we need the notion of an {\em exact sequence}.

\begin{definition}
Let $g$ be an initial function, $\langle g^{\x}\rangle_{\xi\in{\sf On}}={\rm coH}[g]$ and $f:\Lambda\to\Theta$ be an ordinal function.

Then, we say $f$ is {\em $g$-exact} if, given ordinals ${\x},\zeta$, $f({\x}+\zeta)=g^\zeta f({\x})$.
\end{definition}

A $g$-exact function $f$ describes the values of $g^{\x} f(0)$. However, for $f$ to be $g$-exact, we need only check a fairly weak condition:

\begin{lemma}\label{tfae}
Let $g$ be an initial function and $\langle g^{\x}\rangle_{\xi\in{\sf On}}={\rm coH}[g]$.

Then, the following are equivalent:
\begin{enumerate}
\item $f$ is $g$-exact
\item for every ordinal ${\x},$ $f({\x})=g^{\x} f(0)$
\item for every ordinal $\zeta>0$ there is ${\x}<\zeta$ such that $f(\zeta)=g^{-{\x}+\zeta}f({\x})$.
\end{enumerate}
\end{lemma}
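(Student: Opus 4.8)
The plan is to prove the three conditions equivalent by establishing the cycle $(1)\Rightarrow(2)\Rightarrow(3)\Rightarrow(1)$, since each implication is then a short argument drawing on the additivity of cohyperations and the subtraction lemma (Lemma \ref{theorem:BasicPropertiesOrdinalArithmetic}.1). The first implication is essentially immediate: assuming $f$ is $g$-exact, I would write an arbitrary ordinal ${\x}$ as $0+{\x}$ and apply the defining equation $f({\x}+\zeta)=g^\zeta f({\x})$ with the roles $0$ and ${\x}$, obtaining $f({\x})=f(0+{\x})=g^{\x} f(0)$, which is exactly condition (2).

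For $(2)\Rightarrow(3)$, given $\zeta>0$ I would simply take ${\x}=0<\zeta$; then $-{\x}+\zeta=\zeta$, and condition (2) applied at both $\zeta$ and $0$ gives $f(\zeta)=g^\zeta f(0)=g^{-0+\zeta}f(0)$, which is the required instance of (3). One should note that condition (3) is genuinely weaker, asking only for \emph{some} witness ${\x}<\zeta$ rather than a uniform formula, so this direction must produce only one such witness, and ${\x}=0$ is the natural choice.

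The substantive implication is $(3)\Rightarrow(1)$, and this is where I expect the main obstacle to lie. The hypothesis only guarantees, for each $\zeta>0$, a single decomposition $\zeta={\x}+(-{\x}+\zeta)$ along which the exactness equation holds; I must bootstrap this into the full statement $f(\mu+\nu)=g^\nu f(\mu)$ for \emph{all} $\mu,\nu$. My plan is to first upgrade (3) to condition (2) by transfinite induction on ${\x}$: for ${\x}=0$ it is trivial, and for ${\x}>0$ I apply (3) to get some $\sigma<{\x}$ with $f({\x})=g^{-\sigma+{\x}}f(\sigma)$, then invoke the induction hypothesis $f(\sigma)=g^\sigma f(0)$ and use left-additivity of the cohyperation (clause 3 of the cohyperation definition, namely $g^{\sigma+\tau}=g^\tau g^\sigma$) with $\tau=-\sigma+{\x}$ to conclude $f({\x})=g^{-\sigma+{\x}}g^\sigma f(0)=g^{\sigma+(-\sigma+{\x})}f(0)=g^{\x} f(0)$. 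Once (2) is in hand, deriving (1) is a one-line computation: for arbitrary $\mu,\nu$,
\[
f(\mu+\nu)=g^{\mu+\nu}f(0)=g^\nu g^\mu f(0)=g^\nu f(\mu),
\]
again using left-additivity. The delicate point throughout is the correct bookkeeping of ordinal subtraction---ensuring $-\sigma+{\x}$ is well-defined via $\sigma<{\x}$ and that the additive recombination $\sigma+(-\sigma+{\x})={\x}$ is applied in the order matching the left-additivity convention $g^{\alpha+\beta}=g^\beta g^\alpha$---so I would state these arithmetic facts explicitly before running the induction.
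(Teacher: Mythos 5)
Your proposal is correct and follows essentially the same route as the paper: the substantive step, upgrading (3) to (2) by transfinite induction using the chosen witness and the left-additivity $g^{\sigma+\tau}=g^\tau g^\sigma$, is exactly the paper's argument for $3\Rightarrow 2$, and your derivation of (1) from (2) is the paper's one-line computation for $2\Rightarrow 1$. The only difference is cosmetic: you close the cycle via $(2)\Rightarrow(3)$ with witness ${\x}=0$, while the paper notes directly that (1) implies both (2) and (3).
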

\proof
It should be clear that condition 1 implies the other two, so we shall show that 3 implies 2 and 2 implies 1.

\paragraph{$\bf 3\Rightarrow 2$.} Let $f$ satisfy condition 3, and suppose inductively that, for all $\xi<\zeta$, $f(\xi)=g^\xi f(0)$.

Pick $\xi<\zeta$ such that $f(\zeta)=g^{-\xi+\zeta}f(\xi)$. Then,
\[f(\zeta)=g^{-\xi+\zeta}f(\xi)=g^{-\xi+\zeta}g^\xi f(0)=g^\zeta f(0),\]
as claimed.

\paragraph{$\bf 2\Rightarrow 1$.} Assume that $f$ satisfies condition 2. We then see that, given $\xi<\zeta$,
\[
f(\zeta)=g^\zeta  f(0)=g^{-\xi+\zeta}g^\xi f(0)= g^{-\xi+\zeta}f(\xi).
\]
\endproof

\begin{example}
The sequence
\[
(\varepsilon_1, \varepsilon_1, \varepsilon_1, \ldots, 2, 0, \ldots)
\]
that is constant $\varepsilon_1$ for the first $ \omega$ coordinates, $2$ on the $\omega^{\rm th}$ coordinate and $0$ after that is $\le$-exact.
\end{example}

\begin{proof}
By Example \ref{Example:lOmega} and Lemma \ref{tfae}.
\end{proof}

\section{Inverting hyperations}\label{ih}

As promised, cohyperation provides left-inverses for hyperations. In this section we will show how this is obtained.

\begin{theorem}\label{leftinv}
Suppose that $f$ is a normal function and $\langle f^\xi\rangle_{\xi\in{\sf On}}$ its hyperation. Let $g$ be a left adjoint for $f$ with cohyperation $\langle g^\xi\rangle_{\xi\in{\sf On}}$.

Then, $g^\xi$ is a left adjoint of $f^\xi$ for all $\xi$.
\end{theorem}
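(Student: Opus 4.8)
The plan is to prove the statement by transfinite induction on $\xi$, following the recursive structure of both the hyperation ${\rm Hyp}[f]$ and the cohyperation ${\rm coH}[g]$. Since a left adjoint is characterized by the two conditions (\ref{laone}) and (\ref{latwo}), I need to verify, for each $\xi$ and all ordinals $\alpha,\beta$, that $\alpha=f^\xi(\beta)\Rightarrow g^\xi(\alpha)=\beta$ and $\alpha<f^\xi(\beta)\Rightarrow g^\xi(\alpha)<\beta$. The base cases $\xi=0$ and $\xi=1$ are immediate: $f^0=g^0={\sf id}$, and $g^1=g$ is a left adjoint of $f^1=f$ by hypothesis.

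First I would dispose of the successor-type additive case. Recall from Lemma~\ref{char} and Lemma~\ref{cochar} that both sequences are governed by the additivity clauses, so for $\xi=\omega^\rho+\zeta$ with $\zeta<\xi$ we have $f^\xi=f^{\omega^\rho}f^\zeta$ while $g^\xi=g^\zeta g^{\omega^\rho}$. The key observation is that \emph{left adjoints compose contravariantly}: if $g$ is a left adjoint of $f$ and $g'$ is a left adjoint of $f'$, then $g'g$ is a left adjoint of $ff'$. I would isolate this as a preliminary composition lemma, checking both defining conditions directly. Granting it, the additive case follows immediately from the induction hypothesis applied to $\omega^\rho$ and to $\zeta$ (both strictly below $\xi$ in the relevant orderings), giving that $g^\zeta g^{\omega^\rho}$ is a left adjoint of $f^{\omega^\rho}f^\zeta$.

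This reduces everything to the case $\xi=\omega^\rho$ with $\rho>0$, where $f^{\omega^\rho}=f_\rho$ is the $\rho$-th Veblen function (Lemma~\ref{theorem:HyperExponentialsAndVeblen}) and $g^{\omega^\rho}$ is defined by the recursive cohyperation clauses 5 and 6. Here I would argue by a subsidiary induction on the argument $\alpha$, matching the cases of the cohyperation recursion. The crux is to show $g^{\omega^\rho}$ inverts $f^{\omega^\rho}$ correctly at limit-stage values. Since $f^{\omega^\rho}$ is normal and its values at successor arguments are obtained as limits $\lim_{\eta\to\omega^\rho}f^\eta(f^{\omega^\rho}(\beta)+1)$, I would use that $g^{\omega^\rho}(\alpha)$ stabilizes via the $\eta^\ast$ mechanism (Lemma~\ref{stable}) and the supremum clause~6, translating the adjunction conditions into statements about where $\alpha$ sits relative to the range of $f^{\omega^\rho}$. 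The two conditions (\ref{laone}),(\ref{latwo}) should be re-expressed, via Lemma~\ref{isanadj}, as the single claim $g^{\omega^\rho}[0,\alpha]=(f^{\omega^\rho})^{-1}[0,\alpha]$, which is an initial segment since $f^{\omega^\rho}$ is normal; I would prove this equality by induction on $\alpha$.

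\textbf{The main obstacle} I expect is the limit-stage matching: verifying that the \emph{supremum} defining $g^{\omega^\rho}$ in clause~6 lands exactly on the largest $\beta$ with $f^{\omega^\rho}(\beta)\leq\alpha$, and dually that the $\eta^\ast$-reduction in clause~5 is compatible with taking the left adjoint through the limit $\lim_{\eta\to\omega^\rho}f^\eta$ that defines $f^{\omega^\rho}$ at successors. Concretely, I must show that continuity of $f^{\omega^\rho}$ on one side corresponds to the supremum behavior of $g^{\omega^\rho}$ on the other, and that the fixpoint structure exploited by the cohyperation (where $f^\eta\alpha<\alpha$ triggers clause~5) aligns with strict inequality $\alpha<f^{\omega^\rho}(\beta)$ on the hyperation side. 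I anticipate that establishing the composition lemma cleanly, together with the initial-segment reformulation of the adjunction, will make this alignment tractable, but the bookkeeping relating $\eta^\ast$ to the Veblen fixpoints will be where the real work lies.
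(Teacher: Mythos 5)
Your skeleton matches the paper's proof: an outer induction on $\xi$, the additive case $\xi=\omega^\rho+\zeta$ dispatched by the (correct) observation that left adjoints compose contravariantly --- the paper does exactly this, inline --- and everything reduced to the core case $\xi=\omega^\rho$. The trouble is that your proposal stops being a proof precisely where the theorem lives. For $\xi=\omega^\rho$ you describe what needs to be verified, propose the (sound) reformulation $g^{\omega^\rho}[0,\alpha]=(f^{\omega^\rho})^{-1}[0,\alpha]$, and then explicitly defer the crux (``the bookkeeping relating $\eta^\ast$ to the Veblen fixpoints will be where the real work lies''). That deferred work \emph{is} the substance of the paper's proof, and none of it appears in your text; the reformulation does not by itself produce it.

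Concretely, two arguments are missing. First, when $\alpha=f^{\omega^\rho}\beta$, one must show that it is clause 6 (not clause 5) of the recursive cohyperation that computes $g^{\omega^\rho}\alpha$. The paper gets this from the identity $g^\eta f^{\omega^\rho}=g^\eta f^\eta f^{-\eta+\omega^\rho}=f^{-\eta+\omega^\rho}=f^{\omega^\rho}$ for all $\eta<\omega^\rho$, which interlocks the outer induction hypothesis at $\eta$ with right-additivity of the hyperation and the fact that $-\eta+\omega^\rho=\omega^\rho$; this forces $\eta^\ast({\rm coH}[g],\omega^\rho,\alpha)=0$. Then $g^{\omega^\rho}\alpha=\sup_{\alpha'<\alpha}(g^{\omega^\rho}(\alpha')+1)$ is squeezed from both sides: it is $\leq\beta$ by the subsidiary induction hypothesis (condition (\ref{latwo}) for $\alpha'<\alpha$), and $\geq\beta$ because $g^{\omega^\rho}f^{\omega^\rho}\beta'=\beta'$ for every $\beta'<\beta$ while $f^{\omega^\rho}\beta'<\alpha$. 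Second, when $\alpha<f^{\omega^\rho}\beta$ with $\beta=\beta'+1$, the paper invokes the hyperation recursion $f^{\omega^\rho}(\beta'+1)=\lim_{\eta\to\omega^\rho}f^\eta(f^{\omega^\rho}(\beta')+1)$ to choose $\eta<\omega^\rho$ with $\alpha<f^\eta(f^{\omega^\rho}(\beta')+1)$, applies the induction hypothesis at $\eta$ to get $g^\eta\alpha\leq f^{\omega^\rho}\beta'$, and then uses left-additivity of the cohyperation ($g^{\omega^\rho}=g^{\omega^\rho}g^\eta$, since $\eta+\omega^\rho=\omega^\rho$) together with the induction hypothesis at $\beta'$ to conclude $g^{\omega^\rho}\alpha\leq\beta'<\beta$; the limit case of $\beta$ follows from continuity of $f^{\omega^\rho}$. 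These two steps are exactly the ``alignment'' you flag as an anticipated obstacle; since they are where hyperation right-additivity, cohyperation left-additivity, and the $\eta^\ast$ mechanism actually meet, leaving them unexecuted leaves the theorem unproved.
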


\proof
We prove this by induction on $\x$.

Suppose that $\alpha\leq f^\xi\beta$. If $\x=\gamma+\delta$ with $\gamma,\delta<\xi$, we see that $\alpha\leq f^\gamma f^\delta\beta$, so that $g^\gamma \alpha\leq f^\delta\beta $ and thus $g^\delta g^\gamma\alpha\leq \beta$. Strict equality and strict inequality are preserved by this argument.

Thus we may assume $\xi=\omega^\rho$ for some $\rho>0$.

The case $\beta=0$ is also easy; recall that by Lemma \ref{iszero}, $f(0)=0$, and thus $f^\xi 0=0$ for all $\xi$. But then, $g^\xi 0$ is also $0$ for all $\xi$ because $g^\xi$ is initial.

So we assume that $\beta\not=0$. Let us also suppose, inductively, that (\ref{laone}) and (\ref{latwo}) hold for $\alpha',\beta'$ whenever ({\em i}) $\beta'<\beta$ or ({\em ii}) $\beta'=\beta$ and $\alpha'<\alpha$.

Consider first the case that equality holds; $\alpha=f^{\omega^\rho}\beta$. Note that, if $\eta<\omega^\rho$, by induction on $\xi=\omega^\rho$,
\[g^\eta f^{\omega^\rho}=g^\eta f^\eta f^{-\eta+\omega^\rho}=f^{-\eta+\omega^\rho}=f^{\omega^\rho};\]
thus given $\beta$, there is no $\eta<\omega^\rho$ with $g^\eta f^{\omega^\rho}\beta<f^{\omega^\rho}\beta.$

It follows that $g^{\omega^\rho}\alpha$ is defined by Clause 6; that is,
\[g^{\omega^\rho}\alpha =\sup_{\alpha'\in [0,\alpha)} (g^{\omega^\rho}(\alpha')+1)\leq \beta,\]
where the inequality follows by induction on $\alpha'<\alpha$, since if $\alpha'<\alpha= f^{\omega^\rho}\beta$, $g^{\omega^\rho}\alpha'<\beta$.

Next we check that $g^{\omega^\rho}\alpha\geq \beta$.

For all $\beta'<\beta$, we can assume by induction that $g^{\omega^\rho}f^{\omega^\rho}\beta'=\beta'$. But $f^{\omega^\rho}\beta'<f^{\omega^\rho}\beta=\alpha$, and thus
\[g^{\omega^\rho}\alpha\geq \sup_{\beta'<\beta}(g^{\omega^\rho} f^{\omega^\rho}(\beta')+1)\stackrel{\mathrm{IH}}=\sup_{\beta'<\beta}(\beta'+1) =\beta.\]

Finally, let us see that if $\alpha<f^{\omega^\rho},$ then indeed $g^{\omega^\rho}\alpha<\beta$. Here we proceed by induction on $\beta$.

If $\beta$ is a limit ordinal, then $\alpha< f^{\omega^\rho}\beta'$ for some $\beta'<\beta$, in which case by induction hypothesis $g^{\omega^\rho}\alpha< \beta'<\beta$.

If $\beta=\beta'+1$, $\alpha < f^\eta(f^{\omega^\rho}(\beta')+1)$ for some $\eta<\omega^\rho$. Thus, by induction on $\eta<\xi=\omega^\rho$, $g^{\eta}\alpha<f^{\omega^\rho}(\beta')+1$, so that $g^\eta\alpha\leq f^{\omega^\rho}\beta'$ and by induction on $\beta'<\beta$,
\[g^{\omega^\rho}\alpha=g^{\omega^\rho}g^{\eta}\alpha\leq \beta'<\beta.\]

\endproof

\begin{cor}
If $f$ is a normal function with hyperation $\langle f^\xi\rangle_{\xi\in{\sf On}}$ and $g$ is a left adjoint to $f$ with cohyperation $\langle g^\xi\rangle_{\xi\in{\sf On}}$, then, given ordinals $\xi<\zeta$ and $\alpha$, $g^\xi f^\zeta=f^{-\xi+\zeta}$ and $g^\zeta f^\x=g^{-\xi+\zeta}$.
\end{cor}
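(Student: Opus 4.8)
The plan is to derive both identities directly from Theorem \ref{leftinv} together with the additivity laws for hyperations and cohyperations, so that no fresh transfinite induction is needed. The one fact I would isolate first is that a left adjoint is a genuine left inverse on the nose: by Theorem \ref{leftinv}, $g^\xi$ is a left adjoint of $f^\xi$, and instantiating the defining condition (\ref{laone}) at $\alpha=f^\xi\beta$ gives $g^\xi(f^\xi\beta)=\beta$ for every $\beta$, i.e. $g^\xi f^\xi={\sf id}$. This single observation, applied at the right exponent, drives everything.

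For the first identity I would use that $\xi<\zeta$ guarantees, by Lemma \ref{theorem:BasicPropertiesOrdinalArithmetic}.1, a unique ordinal $-\xi+\zeta$ with $\xi+(-\xi+\zeta)=\zeta$. Right additivity of the hyperation then yields $f^\zeta=f^{\xi+(-\xi+\zeta)}=f^\xi f^{-\xi+\zeta}$, and composing on the left with $g^\xi$ gives
\[
g^\xi f^\zeta=g^\xi f^\xi f^{-\xi+\zeta}={\sf id}\,f^{-\xi+\zeta}=f^{-\xi+\zeta}.
\]
For the second identity I would exploit that cohyperations are \emph{left}-additive, so that the factorization of $g^\zeta$ reverses the order: $g^\zeta=g^{\xi+(-\xi+\zeta)}=g^{-\xi+\zeta}g^\xi$. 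Composing on the right with $f^\xi$ and again using $g^\xi f^\xi={\sf id}$ gives
\[
g^\zeta f^\xi=g^{-\xi+\zeta}g^\xi f^\xi=g^{-\xi+\zeta}.
\]

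I do not anticipate a genuine obstacle here; the corollary is essentially bookkeeping on top of Theorem \ref{leftinv}. The only point that deserves care, and which I would state explicitly rather than gloss over, is the asymmetry between the two factorizations: the hyperation is right-additive ($f^{\alpha+\beta}=f^\alpha f^\beta$) while the cohyperation is left-additive ($g^{\alpha+\beta}=g^\beta g^\alpha$), and it is precisely this difference that makes $g^\xi$ cancel against $f^\xi$ on the correct side in each case. Getting the composition order right is the whole content of the argument.
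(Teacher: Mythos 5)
Your proposal is correct and follows exactly the paper's route: the paper likewise derives the corollary immediately from Theorem \ref{leftinv} via the two factorizations $f^\zeta=f^\xi f^{-\xi+\zeta}$ and $g^\zeta=g^{-\xi+\zeta}g^\xi$. You merely make explicit the step the paper leaves implicit, namely that condition (\ref{laone}) applied to $g^\xi$ and $f^\xi$ gives $g^\xi f^\xi={\sf id}$, which is a fair amount of detail to add but not a different argument.
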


\proof
Immediate from Theorem \ref{leftinv}, writing $f^\zeta=f^\xi f^{-\xi+\zeta}$ and $g^\zeta=g^{-\xi+\zeta}g^\x$.
\endproof

As an important example we mention {\em hyperlogarithms}, i.e., the cohyperations $\langle L^\xi\rangle_{\xi\in{\sf On}}$ and $\langle \ell^\xi\rangle_{\xi\in{\sf On}}^\xi$, where, recall, $L(\omega^\alpha+\beta)=\alpha$ (provided $\beta<\omega^\alpha+\beta$) and $\le(\alpha+\omega^\beta)=\beta$.

\begin{corollary}\label{lastcor}
The cohyperations $L^\xi,\le^\xi$ are left adjoints to the hyperations $\ex^\x$ for all $\x$.
\end{corollary}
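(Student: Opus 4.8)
The plan is to obtain the statement as an immediate instance of Theorem \ref{leftinv}, which asserts that whenever $g$ is a left adjoint of a normal function $f$, the cohyperation $g^\xi$ is a left adjoint of the hyperation $f^\xi$ for every $\xi$. Thus it suffices to set $f := e$ and take $g := L$ in one application and $g := \ell$ in another, after checking that the hypotheses of Theorem \ref{leftinv} are met. First I would record that $e$ is a normal function with $e(0)=0$: since $e(\xi)=\omega^\xi$ for $\xi>0$ while $e(0)=0$, the function $e$ is strictly increasing and continuous and vanishes at $0$, so by Lemma \ref{iszero} it is the kind of normal function that admits left adjoints.

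The one piece of genuine verification is that both $L$ and $\ell$ satisfy the two defining conditions (\ref{laone}) and (\ref{latwo}) of a left adjoint of $e$. For (\ref{laone}), if $\alpha=e(\beta)$ then either $\alpha=\beta=0$ or $\alpha=\omega^\beta$ with $\beta>0$, and in both cases the greatest-exponent and least-exponent definitions give $L(\alpha)=\ell(\alpha)=\beta$. For (\ref{latwo}), suppose $\alpha<e(\beta)=\omega^\beta$; writing $\alpha$ in Cantor normal form as $\omega^{\alpha_1}+\ldots+\omega^{\alpha_n}$ we have $\omega^{\alpha_1}\leq\alpha<\omega^\beta$, so $\alpha_1<\beta$, and hence $L(\alpha)=\alpha_1<\beta$ and $\ell(\alpha)=\alpha_n\leq\alpha_1<\beta$ (the case $\alpha=0$ being trivial). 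This reduces entirely to elementary facts about the Cantor normal form, and indeed $L$ and $\ell$ are precisely the two functions singled out as left adjoints of $e$ in Section \ref{exlo}, so I do not expect this step to present any real obstacle.

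Once $L$ and $\ell$ are seen to be left adjoints of $e$, Lemma \ref{isanadj} guarantees that they are initial functions, so their cohyperations $\langle L^\xi\rangle_{\xi\in{\sf On}}$ and $\langle \ell^\xi\rangle_{\xi\in{\sf On}}$ are well defined. Applying Theorem \ref{leftinv} once with $g:=L$ and once with $g:=\ell$ then yields that $L^\xi$ and $\ell^\xi$ are left adjoints of $e^\xi$ for every $\xi$, which is exactly the claim. The upshot is that all the substantive work has already been done in Theorem \ref{leftinv}: the corollary is really just the observation that the hyperlogarithms $L$ and $\ell$ are the relevant left adjoints to which that theorem applies, so the main (and only mild) point of care is confirming the left-adjoint identities above rather than anything in the argument proper.
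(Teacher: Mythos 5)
Your proposal is correct and follows exactly the paper's own argument: the corollary is an immediate instance of Theorem \ref{leftinv} applied with $f:=e$ and $g:=L$ (respectively $g:=\ell$), using that $L$ and $\ell$ are initial left adjoints to $e$. Your additional verification of conditions (\ref{laone}) and (\ref{latwo}) via Cantor normal forms is sound, though the paper treats this as already established in Section \ref{exlo}.
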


\proof
Immediate from Theorem \ref{leftinv} and the fact that $L,\ell$ are both initial left adjoints to $e$.\endproof

\begin{example}
As seen in Example \ref{newex}, $e^\omega 2=\varepsilon_1$; meanwhile, according to Example \ref{Example:lOmega}, $\ell^\omega\varepsilon_1=2$.

Then, $\ell^\omega e^\omega 2=2$, as predicted by Corollary \ref{lastcor}.
\end{example}

%%BOOKMARK

\section*{Concluding remarks}

The work we have presented gives a general theory of two forms of iteration of ordinal functions which give natural alternatives to existing notions. The ordinals one may generate through hyperations are no larger than those given by Veblen progressions. However, the deep algebraic structure of hyperations and cohyperations has great advantages in applications, as the authors have found in their work in provability logics, and we trust that many mathematicians working routinely with ordinals will find our framework appealing.

As with Veblen progressions, one may work with higher-order versions of hyperations, where iteration is applied to exponents of functions, thus facilitating operations well beyond $\Gamma_0$. This is a direction well worth looking into, and possibly a necessary one if provability logics are to work their way into the analysis stronger and stronger theories.

Moreover, one may also think of hyperations and cohyperations as special cases of iteration through additive sequences of functions. To be precise, one may think of hyperations (and similarly cohyperations) as operators of the form ${\rm Hyp}^F_{\mathcal C}$, where $\mathcal C$ is a class of functions and $F$ a criterion for selecting a preferred candidate from all right-additive function families in $\mathcal C$. From this perspective, ${\rm Hyp}$ would become ${\rm Hyp}^{\sf min}_{\sf Nrm}$, where $\sf Nrm$ denotes the class of normal functions and $\sf min$ denotes pointwise minimization. We have studied the two versions which were most useful for a specific application; however, there are many more possibilities left to be explored.

%%%%%%%%%%%%%%%%%%%%%%%%%%%%%%%%%%%%%%%%%%%%%%%
\section*{Acknowledgments}
This will be filled out later.
% Sol Feferman, Andreas Weiermann

\bibliographystyle{plain}
\bibliography{biblio}

%% Authors are advised to submit their bibtex database files. They are
%% requested to list a bibtex style file in the manuscript if they do
%% not want to use model1b-num-names.bst.

%% References without bibTeX database:

% \begin{thebibliography}{00}

%% \bibitem must have the following form:
%%   \bibitem{key}...
%%

% \bibitem{}

% \end{thebibliography}

\end{document}